\newtheorem{theorem}{Theorem}[section]
\newtheorem{proposition}[theorem]{Proposition}
\newtheorem{lemma}[theorem]{Lemma}
\newtheorem{corollary}[theorem]{Corollary}
\theoremstyle{definition}
\newtheorem{definition}[theorem]{Definition}
\newtheorem{convention}[theorem]{Convention}
\theoremstyle{remark}
\newtheorem{remark}[theorem]{Remark}
\newtheorem{example}[theorem]{Example}
\numberwithin{equation}{section}
\newcommand{\R}{\mathbb R}
\newcommand{\Z}{{\mathbb Z}}
\newcommand{\N}{{\mathbb N}}
\newcommand{\C}{{\mathbb C}}
\newcommand{\Q}{{\mathbb Q}}
\newcommand{\gp}{{\Pi}}
\DeclareMathOperator{\wt}{wt}
\DeclareMathOperator{\dep}{dep}
\newcommand{\bk}{{\boldsymbol{k}}}
\newcommand{\bl}{{\boldsymbol{l}}}
\title{Symmetric multiple Eisenstein series}
\author{Takashi Hara, Kenji Sakugawa, Koji Tasaka}
\address[Takashi Hara]{College of Liberal Arts, Tsuda University, 2-1-1 Tsuda-machi, Kodaira City, Tokyo 187-8577, Japan}
\email{t-hara@tsuda.ac.jp}
\address[Kenji Sakugawa]{Faculty of Education, Shinshu University, 6-Ro Nishi-nagano, Nagano City, Nagano 380-8544, Japan}
\email{sakugawa\_{}kenji@shinshu-u.ac.jp}
\address[Koji Tasaka]{Faculty of Science and Engineering, Kindai University, 3-4-1 Kowakae, Higashiosaka City, Osaka 577-8502, Japan}
\email{tasaka@math.kindai.ac.jp}
\date{\today}
\begin{document}

\begin{abstract}
In this paper, we introduce the \emph{symmetric multiple Eisenstein series}, a variant of the multiple Eisenstein series. 
As a fundamental result, we show that they satisfy the linear shuffle relation.
As a case study, we investigate the vector space spanned by symmetric double Eisenstein series of weight $k$.
When $k$ is even, it coincides with the space spanned by modular forms of weight $k$ and the derivative of the Eisenstein series of weight $k-2$.
For $k$ odd, we prove that its dimension equals $\lfloor k/3\rfloor$.
We further provide an explicit correspondence between the linear shuffle relation and the Fay-shuffle relation satisfied by elliptic double zeta values, which may be of independent interest.
In connection with modular forms, we prove that every modular form can be expressed as a linear combination of symmetric \emph{triple} Eisenstein series.
This will serve as a first step toward understanding modular phenomena for symmetric multiple zeta values observed by Kaneko and Zagier.
\end{abstract}

\maketitle

\setcounter{tocdepth}{1}
\tableofcontents

\section{Introduction}

The purpose of the present paper is to introduce and investigate the \emph{symmetric multiple Eisenstein series} $G_{\bk}^{\shuffle,S}(\tau)$ for each tuple of positive integers $\bk\in \mathbb{N}^d$, where $\tau$ denotes an element in the complex upper half-plane. Throughout the article, the symbol $\mathbb{N}$ denotes the set of all {\em positive} integers, not including $0$.

\subsection{Main results}

The function $G^{\shuffle,S}_{\bk}(\tau)$ which we shall introduce is a variant of the multiple Eisenstein series $G_{k_1,\ldots,k_d}(\tau)$ which is defined for integers $k_1,\ldots,k_{d-1}\ge2$ and $k_d\ge3$ by the absolutely convergent series
\[ G_{k_1,\ldots,k_d}(\tau)=\sum_{\substack{0<\lambda_1< \cdots < \lambda_d\\ \lambda_1,\ldots,\lambda_d \in \Z\tau+\Z}}\frac{1}{ \lambda_1^{k_1}\cdots \lambda_d^{k_d}}. \]
Here, for lattice points $\lambda, \mu \in\Z\tau+\Z$, 
we write $\lambda <\mu$ if 
$\mu-\lambda$ is contained in the complex upper half-plane or on the positive real half-line.
The function $G_{\bk}(\tau)$, which is holomorphic on the complex upper half-plane, was first introduced by Gangl, Kaneko, and Zagier in \cite[Section 1]{GKZ} in the case $d=2$ (double Eisenstein series) in connection with the multiple zeta values $\zeta(\bk)$ (see \eqref{eq:MZV_def} for the definition).
For each $\bk=(k_1,\ldots,k_d)\in \mathbb{N}^d$, our function $G_{\bk}^{\shuffle,S}(\tau)$ is defined from the shuffle regularization $G^{\shuffle}_{\bk}(\tau)$ of $G_{\bk}(\tau)$, established in \cite[Definition 4.8]{BT}, as follows:
\begin{equation*}\label{eq:sym_mes_reg}
G^{\shuffle,S}_{\bk}(\tau)
:= \sum_{j=0}^d (-1)^{k_{j+1}+k_{j+2}+\cdots+k_d}
   G^{\shuffle}_{k_1,\ldots,k_j}(\tau)\,
   G^{\shuffle}_{k_d,\ldots,k_{j+1}}(\tau).
\end{equation*}
Note that, when $d=1$, the function $G^{\shuffle,S}_k(\tau)$ coincides with the classical Eisenstein series of weight $k$.

One of the motivations for studying the function $G^{\shuffle,S}_{\bk}(\tau)$ comes from the conjectural relationship between symmetric \emph{triple} zeta values (the case $d=3$) and modular forms for $\mathrm{SL}_2(\mathbb{Z})$ proposed by Kaneko and Zagier~\cite{KZ}; see Remark~\ref{rem:KZ} for details.
Here for each $\bk = (k_1,\ldots,k_d) \in \mathbb{N}^d$, the \emph{symmetric multiple zeta value} $\zeta^{\shuffle,S}(\bk)$ is defined by
\begin{equation}\label{eq:sym_mzv_reg}
\zeta^{\shuffle,S}(\bk)
= \sum_{j=0}^d (-1)^{k_{j+1}+\cdots+k_d}
  \zeta^\shuffle(k_1,\ldots,k_j)\,
  \zeta^\shuffle(k_d,\ldots,k_{j+1}),
\end{equation}
where $\zeta^{\shuffle}(\bk)$ is the shuffle regularization of $\zeta(\bk)$.
The number $\zeta^{\shuffle,S}(\bk)$ appears as the constant term of the $q$-series $G^{\shuffle,S}_{\bk}(\tau)$; recall that $G^{\shuffle}_{\bk}(\tau)$ is defined as a power series in $q=e^{2\pi i\tau}$ whose constant term is given by $\zeta^{\shuffle}(\bk)$.
Accordingly, symmetric multiple Eisenstein series have direct applications to the study of symmetric multiple zeta values.
Using this terminology, we aim to understand the relationship between symmetric multiple zeta values and modular forms via symmetric multiple Eisenstein series.
Here, we note a difference in terminology from that used in the literature; although symmetric multiple zeta values were originally introduced as elements of the quotient ring $\mathcal{Z}/\zeta(2)\mathcal{Z}$, where $\mathcal{Z}$ denotes the ring of multiple zeta values, we refer in this paper to the real number $\zeta^{\shuffle,S}(\bk)$ itself as the symmetric multiple zeta value.

\

As a first step toward clarifying the conjectural relationship above, we obtain the following result, which is the first main result of the present article.
For each index $\bk = (k_1,\ldots,k_d) \in \mathbb{N}^d$, let $\widetilde{G}_{\bk}^{\shuffle,S}(\tau)=G_{\bk}^{\shuffle,S}(\tau)/(2\pi i)^{k_1+\cdots+k_d}$.

\begin{theorem}\label{thm:modular_relations}
Every cusp form of weight $k$ for ${\rm SL}_2(\Z)$ with rational Fourier coefficients is presented as a $\Q$-linear combination of symmetric triple Eisenstein series $\widetilde{G}_{k_1,k_2,k_3}^{\shuffle,S}(\tau)$ for $k_1,k_2,k_3\in \N$ with $k=k_1+k_2+k_3$.
\end{theorem}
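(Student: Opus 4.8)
The plan is to reduce the theorem to explicit Fourier expansions and then to the classical structure theory of quasimodular forms together with period polynomials. Since $S_k=\{0\}$ unless $k$ is even (and $k\ge12$), we may assume $k$ even throughout. Write $\widetilde M_k$ for the space of quasimodular forms of weight $k$ for $\mathrm{SL}_2(\Z)$, $\widetilde M_k(\Q)$ for those with rational Fourier coefficients, $D=q\,\frac{d}{dq}$, and $\mathcal R\subset\C$ for the $\Q$-subalgebra generated by the numbers $\zeta^\shuffle(\bl)/(2\pi i)^{\wt(\bl)}$.

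The first task is the Fourier expansion of $\widetilde G_{k_1,k_2,k_3}^{\shuffle,S}(\tau)$. Substituting the Fourier expansions of the shuffle-regularized multiple Eisenstein series $G^{\shuffle}_{\bl}$ from \cite{BT} into the definition of $G^{\shuffle,S}_{k_1,k_2,k_3}$ and expanding the products $G^{\shuffle}_{k_1,\ldots,k_j}G^{\shuffle}_{k_3,\ldots,k_{j+1}}$ presents $\widetilde G_{k_1,k_2,k_3}^{\shuffle,S}$ as a finite $\Q$-linear combination $\sum_\alpha c_\alpha\phi_\alpha(\tau)$ with $\phi_\alpha\in\Q[[q]]$ and $c_\alpha\in\mathcal R$; its constant term is $\zeta^{\shuffle,S}(k_1,k_2,k_3)/(2\pi i)^k$. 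The structural statement I would prove here is that $\widetilde G_{k_1,k_2,k_3}^{\shuffle,S}\in\widetilde M_k(\Q)\otimes_\Q\mathcal R$, i.e.\ that it is an $\mathcal R$-linear combination of quasimodular forms of weight $k$ with rational Fourier coefficients --- something I expect to hold precisely because $k_1+k_2+k_3$ is even. The mechanism should be the same parity-driven cancellation of ``mixed'' terms that renders symmetric multiple zeta values independent of the shuffle-regularization parameter: the odd-weight pieces among the auxiliary $q$-series $g_{\bl}$, and the regularization corrections, pair off, leaving only (products of) even-weight, genuinely quasimodular pieces. I would carry this out using the shuffle and harmonic structure of the $g_{\bl}$, reorganized by the linear shuffle relation for symmetric multiple Eisenstein series established earlier in the paper.

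Granting this, consider $F=\sum_j q_j\widetilde G_{a_j,b_j,c_j}^{\shuffle,S}$ with $q_j\in\Q$ and $a_j+b_j+c_j=k$. Using the structure theorem for quasimodular forms --- the canonical splitting of $\widetilde M_k$ into its cuspidal summand $S_k\subset M_k=\Q E_k\oplus S_k$ and a complement spanned by $D$-derivatives of lower-weight modular forms and of $E_2$ --- expand $F=\sum_i\big(\sum_j q_j\,\nu_i(a_j,b_j,c_j)\big)\psi_i$ in a fixed $\Q$-basis $\{\psi_i\}$ of $\widetilde M_k$ adapted to this splitting, with $\nu_i(a,b,c)\in\mathcal R$. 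Then $F$ equals a prescribed $g\in S_k(\Q)$ precisely when $\sum_j q_j\,\nu_i(a_j,b_j,c_j)$ equals the $\psi_i$-coordinate of $g$ for all $i$ (in particular $0$ for every $i$ outside the $S_k$-block). The theorem is therefore the solvability over $\Q$, for every such $g$, of this linear system in the $q_j$; equivalently, the $\Q$-span of the vectors $(\nu_i(a,b,c))_i$, as $(a,b,c)$ ranges over the compositions of $k$ into three positive parts, contains every vector supported on the $S_k$-block with rational entries.

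To produce the required $\Q$-linear relations among the $\nu_i(a,b,c)$ --- which must simultaneously kill the $E_k$-component and all derivative-components --- I would invoke the dictionary, also established in this paper, between the linear shuffle relations for symmetric double and triple Eisenstein series and the Fay--shuffle relations for elliptic double zeta values. Under this dictionary the cuspidal contributions of the $\widetilde G_{a,b,c}^{\shuffle,S}$ correspond to the period polynomials for weight $k$, so by the Eichler--Shimura isomorphism every weight-$k$ cusp form is reached over $\C$; since all the relations used (linear shuffle, Fay--shuffle, and the normalization of period polynomials) have rational coefficients, one obtains all of $S_k(\Q)$ over $\Q$. The main obstacle is precisely this step: establishing that the transcendental numbers $\zeta^{\shuffle,S}(a_j,b_j,c_j)/(2\pi i)^k$ and the other $\mathcal R$-valued coefficients cancel under a single $\Q$-linear combination in a manner compatible with the cuspidal projection sweeping out all of $S_k(\Q)$. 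This is a rank assertion for an explicit, combinatorially defined matrix over $\mathcal R$; it appears out of reach without the period-polynomial description to reorganize the $\mathcal R$-valued entries, and making that reorganization interact correctly with the $\Q$-structure on $S_k$ is the crux. A secondary, technical obstacle is the quasimodularity-with-rationality claim of the first step, where the cancellation of odd-weight $g_{\bl}$'s and of the regularization artefacts must be made uniform in $(a,b,c)$.
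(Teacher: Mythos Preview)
Your plan has a genuine gap and misses the much simpler argument the paper uses.

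First, your structural claim that $\widetilde G_{k_1,k_2,k_3}^{\shuffle,S}\in\widetilde M_k(\Q)\otimes_\Q\mathcal R$ is not established in the paper and there is no reason to expect it: the depth-two building blocks $\tilde g_{\bl}^\shuffle$ appearing in the Fourier expansions of $G^\shuffle_{\bk}$ are not quasimodular, and the parity cancellation you appeal to does not force them to assemble into quasimodular forms in depth three. You yourself flag this as an ``obstacle''; in fact it is not a technicality but the heart of the difficulty, and the paper never proves anything of this sort. Second, the Fay--shuffle/period-polynomial dictionary you invoke is developed in the paper only for depth~$2$ (the spaces ${\rm LSh}^{(2)}_w$, ${\rm FSh}_w^{\rm pol}$, $W_w^{\rm od}$); there is no depth-$3$ analogue available to feed into Eichler--Shimura, so your main step has no tool to stand on.

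The paper's proof avoids all of this. It never analyzes Fourier expansions of triples at all. Instead it works modulo $\mathcal{TE}_k^S$ using only the stuffle relation (Proposition~\ref{prop:stuffle}) and the explicit depth-two formulas (Proposition~\ref{prop:Gsym_dep2_even}). From the stuffle relation one has
\[
\widetilde G_r^{\shuffle,S}\,\widetilde G_{s,t}^{\shuffle,S}\equiv \widetilde G_{r+s,t}^{\shuffle,S}+\widetilde G_{s,r+t}^{\shuffle,S}\pmod{\mathcal{TE}_k^S}.
\]
Taking $r,s$ odd (so $\widetilde G_r^{\shuffle,S}=0$) and using $\widetilde G_{r+s,t}^{\shuffle,S}=2\widetilde G_{r+s}\widetilde G_t-\widetilde G_k$ yields $\widetilde G_{2j}\widetilde G_{k-2j}\equiv\widetilde G_k\pmod{\mathcal{TE}_k^S}$ for all relevant $j$. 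Taking $r,s,t$ even and specializing $r=s=4$ together with $(\widetilde G_4)^2=\tfrac{7}{6}\widetilde G_8$ then forces $\widetilde G_k\in\mathcal{TE}_k^S$, hence all products $\widetilde G_{2j}\widetilde G_{k-2j}$ lie in $\mathcal{TE}_k^S$. Since every modular form of weight $k$ with rational Fourier coefficients is a $\Q$-linear combination of $\widetilde G_k$ and such products (Theorem~\ref{thm:two_product}), the result follows. No period polynomials, no Eichler--Shimura, no quasimodularity assertion for triples is needed.
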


From Theorem~\ref{thm:modular_relations}, one can express any cusp form of weight~$k$ in terms of symmetric triple Eisenstein series. Taking the constant term of its $q$-expansion then yields a $\mathbb{Q}$-linear relation among symmetric triple zeta values.
For example, we numerically obtain
\begin{align*}
 \frac{67}{64800} \Delta=&-3421404 \widetilde{G}^{\shuffle,S}_{1,1,10}-1140468 \widetilde{G}^{\shuffle,S}_{1,2,9}-885388 \widetilde{G}^{\shuffle,S}_{1,3,8}-789612 \widetilde{G}^{\shuffle,S}_{1,4,7}\\
&-673924 \widetilde{G}^{\shuffle,S}_{1,5,6}-595458 \widetilde{G}^{\shuffle,S}_{1,6,5}-502768    \widetilde{G}^{\shuffle,S}_{1,7,4}-332318 \widetilde{G}^{\shuffle,S}_{1,8,3}\\
&+63770 \widetilde{G}^{\shuffle,S}_{2,2,8}+47888 \widetilde{G}^{\shuffle,S}_{2,3,7}+46253\widetilde{G}^{\shuffle,S}_{2,4,6}+26007 \widetilde{G}^{\shuffle,S}_{2,5,5},
\end{align*}
where we omit the variable $\tau$, and
$\Delta=\Delta(\tau)=q\prod_{n=1}^\infty (1-q^n)^{24}$ with $q=e^{2\pi \tau}$ denotes the unique normalized cusp form (modular discriminant) of weight~$12$ for ${\rm SL}_2(\Z)$.
The expression is not unique, as it depends on the choice of a basis for the space spanned by symmetric triple Eisenstein series.

For comparison, we briefly review earlier results on double Eisenstein series.
Recall that Theorem~\ref{thm:modular_relations} remains valid if symmetric triple
Eisenstein series are replaced by double Eisenstein series.
More precisely, for $r,s \ge 1$, let $
\widetilde{G}_{r,s}^\shuffle(\tau)
:= \frac{G_{r,s}^\shuffle(\tau)}{(2\pi i)^{r+s}}$.
Then, for $k \ge 4$, the $\mathbb{Q}$-vector space
\[
\mathcal{DE}_k
:= \left\langle
\widetilde{G}^\shuffle_{r,s}(\tau)
\ \middle| \ r+s=k,\ r\ge1,\ s\ge2
\right\rangle_{\mathbb{Q}}
\]
of double Eisenstein series of weight $k$ contains all modular forms of weight $k$
for $\mathrm{SL}_2(\mathbb{Z})$ with rational Fourier coefficients; see \cite{GKZ} for details.
This again implies that there exist $\mathbb{Q}$-linear relations among double zeta values
arising from cusp forms.
Gangl, Kaneko, and Zagier obtained in \cite[Theorem 3]{GKZ} an explicit description of such relations using even period polynomials, which are in one-to-one correspondence with modular forms.
Moreover, as a refinement of their result, explicit formulas expressing normalized cuspidal eigenforms for ${\rm SL}_2(\Z)$
as linear combinations of a certain basis of the space $\mathcal{DE}_k$ were established in \cite[Theorem 1]{Tasaka2}.
For further developments in this direction, we refer the reader to the survey article~\cite{Tasaka3}.

Thanks to these developments, the relations among double zeta values arising from cusp forms
are now well understood.
In contrast to the case of double Eisenstein series, for symmetric triple Eisenstein series,
neither a suitable choice of a basis nor an explicit formula for a given cusp form
has been established so far.

\

We now turn to the second main result of the present article, concerning the determination of the space of symmetric double Eisenstein series.
This result leads to their connections with modular forms and elliptic double zeta values.
For a positive integer $k\ge3$, let us consider
\begin{equation}\label{eq:DE_k^S}
\mathcal{DE}_k^S:=\left\langle \widetilde{G}^{\shuffle,S}_{r,s}(\tau)\ \middle| \ r+s=k,\ r,s\ge1\right\rangle_\Q.
\end{equation}
Note that, for $k\ge4$, the inclusion $\mathcal{DE}_k^S\subset \mathcal{DE}_k$ is obtained as a consequence of Propositions \ref{prop:Gsym_dep2_even} and \ref{prop:Gsym_dep2_odd} and the sum formulas \eqref{eq:sum_formula} and \eqref{eq:sum_formula_2}. 
For $k\ge0$, $M_k^\Q ({\rm SL}_2(\Z))$ denotes the $\Q$-vector space spanned by modular forms of weight $k$ for ${\rm SL}_2(\Z)$ with rational Fourier coefficients.

\begin{theorem}\label{thm:main}
\begin{enumerate}[leftmargin=23pt, label={\rm (\roman*)}]
\item For $k\ge6$ even, we have 
\[\mathcal{DE}_k^S= M_k^\Q ({\rm SL}_2(\Z))\oplus
\Q  \widetilde{G}'_{k-2}(\tau),\]
where $\widetilde{G}'_{k-2}(\tau)$ is the derivative
$\frac{1}{2\pi i}\frac{d}{d\tau}\widetilde{G}_{k-2}^\shuffle(\tau)$ of
$\widetilde{G}_{k}^\shuffle(\tau)
= \frac{G_{k}^\shuffle(\tau)}{(2\pi i)^k}$.
Thus, its dimension is given by
\[\dim_\Q \mathcal{DE}_k^S =  \left\lfloor \frac{k+4}{4}\right\rfloor - \left\lfloor \frac{k-2}{6}\right\rfloor.\]
\item For $k\ge3$ odd, the set $\big\{\widetilde{G}_{j,k-j}^{\shuffle,S}(\tau) \mid  j=1,2,\ldots, \left\lfloor \frac{k}{3} \right\rfloor \big\}$ forms a basis of $\mathcal{DE}_k^S$.
In particular, we obtain
\[\dim_\Q \mathcal{DE}_k^S =\left\lfloor \frac{k}{3}\right\rfloor.\] 
\end{enumerate}
\end{theorem}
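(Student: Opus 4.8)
The plan is to compute the $q$-expansions of $\widetilde G^{\shuffle,S}_{r,s}(\tau)$ explicitly enough to pin down the space $\mathcal{DE}_k^S$, treating the even and odd cases separately. The first step is to reduce everything to depth-one data: expanding the defining formula for $G^{\shuffle,S}_{r,s}$ gives, for an index $(r,s)$ of weight $k=r+s$,
\[
\widetilde G^{\shuffle,S}_{r,s}
= \widetilde G^\shuffle_{r,s} + (-1)^s \widetilde G^\shuffle_r\,\widetilde G^\shuffle_s + (-1)^k \widetilde G^\shuffle_{s,r},
\]
so by the shuffle (stuffle) relations among the $\widetilde G^\shuffle$ one can rewrite $\widetilde G^{\shuffle,S}_{r,s}$ in terms of the single-index Eisenstein series $\widetilde G^\shuffle_m$, their products, and their derivatives. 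I expect this computation to be governed by Propositions~\ref{prop:Gsym_dep2_even} and \ref{prop:Gsym_dep2_odd} (whichever handles the parity of $k$) together with the sum formulas \eqref{eq:sum_formula} and \eqref{eq:sum_formula_2}, which is precisely the input already cited for the inclusion $\mathcal{DE}_k^S\subset\mathcal{DE}_k$; the point now is to extract exact spanning sets rather than just an inclusion.

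For $k$ even, the strategy is to show $\mathcal{DE}_k^S$ lies inside $M_k^\Q(\mathrm{SL}_2(\Z))\oplus\Q\,\widetilde G'_{k-2}$ and then that it fills it. The inclusion should follow from the depth-two reduction above: products $\widetilde G^\shuffle_r\widetilde G^\shuffle_s$ with $r+s=k$ are quasimodular of weight $k$, and modulo genuine modular forms every quasimodular form of weight $k$ and depth $\le 1$ is a multiple of $\widetilde G'_{k-2}$ (the derivative lowers modularity by exactly one step), so the whole combination lands in the claimed two-term sum. For the reverse inclusion I would exhibit: (a) the Eisenstein series $\widetilde G_k$ itself as some $\widetilde G^{\shuffle,S}_{r,s}$ combination — it already appears via the sum formula — and (b) enough cusp forms, which is exactly Theorem~\ref{thm:modular_relations} specialized to $d=2$ (the statement recalled in the text that $\mathcal{DE}_k\supset M_k^\Q$, refined to note the cuspidal part is already hit inside $\mathcal{DE}_k^S$), plus one index whose symmetric double Eisenstein series has a nonzero $\widetilde G'_{k-2}$ component, so that the derivative is not killed. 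The dimension formula then drops out from $\dim M_k^\Q(\mathrm{SL}_2(\Z)) = \lfloor k/12\rfloor$ or $\lfloor k/12\rfloor+1$, rewritten as $\lfloor (k+4)/4\rfloor - \lfloor(k-2)/6\rfloor - 1$ by an elementary floor-function identity, plus one for the derivative summand.

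For $k$ odd there are no nonzero modular forms, and the odd-parity analogue (Proposition~\ref{prop:Gsym_dep2_odd}) should express each $\widetilde G^{\shuffle,S}_{r,s}$ as an explicit $\Q$-linear combination of a small fixed family — I anticipate that after the shuffle reduction the relevant quasimodular objects are the derivatives $\widetilde G'_{m}$ with $m$ even and products thereof, and that dimension bookkeeping in weight $k$ forces the span to have dimension $\lfloor k/3\rfloor$. Concretely I would: show the $\lfloor k/3\rfloor$ vectors $\widetilde G^{\shuffle,S}_{j,k-j}$, $1\le j\le\lfloor k/3\rfloor$, are linearly independent by reading off enough low-order $q$-coefficients (a triangularity argument in $j$ should suffice), and show they span by checking that every remaining index $(r,s)$ with $r+s=k$ reduces to these via the linear shuffle relation (the paper's first fundamental result) — the shuffle relation among symmetric double Eisenstein series should give exactly the relations collapsing the full $(k-1)$-element family down to the claimed basis. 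The main obstacle, in both cases, is the bookkeeping of the derivative/quasimodular contributions: one must verify that the $\widetilde G'_{k-2}$-component of the symmetric combination is genuinely nonzero (even case) and, in the odd case, that the shuffle relations produce no further collapse beyond $\lfloor k/3\rfloor$ — i.e. that the independence and the spanning counts match — which is where a careful analysis of the constant and first few $q$-coefficients, rather than any deep input, will be decisive.
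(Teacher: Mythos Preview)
Your even-weight outline is broadly on the right track, though the citation of Theorem~\ref{thm:modular_relations} is misplaced (that result concerns \emph{triples}).  The paper's route is more direct: Proposition~\ref{prop:Gsym_dep2_even} shows that for $r,s\ge 2$ even one has $\widetilde G^{\shuffle,S}_{r,s}=2\widetilde G^\shuffle_r\widetilde G^\shuffle_s-\widetilde G^\shuffle_k$, while for $r,s$ odd the product term disappears entirely and one gets $-\widetilde G^\shuffle_k$; the case $r=1$ produces the derivative via the sum formula~\eqref{eq:sum_formula}.  So $\mathcal{DE}_k^S$ is spanned by $\widetilde G_k$, $\widetilde G'_{k-2}$, and the products $\widetilde G_{2j}\widetilde G_{k-2j}$.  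The reverse inclusion then comes not from the double-Eisenstein fact $\mathcal{DE}_k\supset M_k^\Q$ you invoke, but from the classical two-product theorem (Theorem~\ref{thm:two_product}): $\widetilde G_k$ together with those products already span $M_k^\Q$.  Your quasimodular bookkeeping is unnecessary once one observes exactly which products occur.

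Your odd-weight plan has two genuine gaps, one in each direction.

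\emph{Upper bound.}  You write that the linear shuffle relation ``should give exactly the relations collapsing the full $(k-1)$-element family down to the claimed basis,'' but checking this is not bookkeeping: one must prove that the linear-shuffle relations among depth-two objects of weight $k$ cut the dimension down to precisely $\lfloor k/3\rfloor$.  The paper does this by introducing the linear shuffle space $\mathrm{LSh}^{(2)}_{k-2}\subset V_{k-2}$ (Definition~\ref{def:lin_sh_space}), showing $\dim\mathcal{DE}_k^S\le\dim\mathrm{LSh}^{(2)}_{k-2}$ (Proposition~\ref{prop:bound_E}), and then computing $\dim\mathrm{LSh}^{(2)}_{k-2}=\lfloor k/3\rfloor$ via the representation theory of $\mathfrak{S}_3$ (Theorem~\ref{thm:dim_LSh2}).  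This representation-theoretic step is the actual content and is absent from your sketch.

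\emph{Lower bound.}  The hope that ``a triangularity argument in $j$ should suffice'' for linear independence does not pan out.  After using Proposition~\ref{prop:Gsym_dep2_odd} and \eqref{eq:q-exp_double}, the paper isolates the \emph{imaginary part} of $\widetilde G^{\shuffle,S}_{r,s}-\tilde\zeta^{\shuffle,S}(r,s)$ (a step you do not mention) and is led to the integer matrix $\mathcal{C}_k=(b_k(\ell,m))$ with $b_k(\ell,m)=\binom{2m}{2\ell-2}+\binom{2m}{k-2\ell}-\delta_{2\ell-1,2m+1}$.  The displayed examples in Section~\ref{ssc:1.2-ii} show this matrix is \emph{not} triangular in any evident way; proving it has rank $\ge\lfloor k/3\rfloor$ requires selecting a specific $\lfloor k/3\rfloor\times\lfloor k/3\rfloor$ submatrix $\mathcal{S}_k$ (Lemma~\ref{lem:rank_C_k}) and carrying out an elaborate Gaussian elimination driven by the nontrivial binomial identity of Proposition~\ref{prop:binom_id} in Appendix~\ref{appendix:rank_estimate}.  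Reading off low-order $q$-coefficients would confront exactly the same linear-algebra obstruction, so your proposed shortcut is not available.
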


For the proof of the inequality
$\dim_{\Q} \mathcal{DE}_k^S \le \left\lfloor \frac{k}{3} \right\rfloor$ in Theorem~\ref{thm:main} (ii), we introduce the \emph{linear shuffle space} ${\rm LSh}_w^{(2)}$, a dual framework for the linear shuffle relation satisfied by symmetric multiple Eisenstein series.
It is defined as a $\mathbb{Q}$-vector subspace of the space of homogeneous polynomials of degree~$w$ (see Definition~\ref{def:lin_sh_space}).
By duality, we obtain an inequality $\dim_{\Q} \mathcal{DE}_k^S \le \dim_{\Q} {\rm LSh}_{k-2}^{(2)}$ (Proposition~\ref{prop:bound_E}).
The desired bound then follows from the dimension formula $\dim_{\Q} {\rm LSh}_{k-2}^{(2)} = \left\lfloor \frac{k}{3} \right\rfloor$, which is established due to representation theory of the symmetric group $\mathfrak{S}_3$ (Theorem~\ref{thm:dim_LSh2}).

The linear shuffle space ${\rm LSh}_w^{(2)}$ itself is of independent interest.
We also establish its connections with the space $W_w^{\rm od}$ of \emph{odd period polynomials}, as well as with the polynomial subspace ${\rm FSh}_w^{\mathrm{pol}}$ of the \emph{Fay-shuffle space}, introduced by Matthes~\cite{Matthes} as a dual setup of the Fay-shuffle relation satisfied by elliptic double zeta values.

\begin{theorem}[=Theprem~\ref{thm:lin_sh_vs_fay}] \label{thm:main2}
\begin{enumerate}[leftmargin=23pt, label={\rm (\roman*)}]
\item For a positive even integer $w$, the space $W_w^{\rm od}$ of odd period polynomials is contained in ${\rm LSh}_w^{(2)}$.
\item For a positive odd integer $w$, there is an explicit isomorphism between the polynomial part of the Fay-shuffle space ${\rm FSh}_w^\mathrm{pol}$ and ${\rm LSh}_w^{(2)}$.
\end{enumerate}
\end{theorem}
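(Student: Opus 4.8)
The plan is to prove both parts by descending to the level of polynomials: I would realise each of ${\rm LSh}_w^{(2)}$, $W_w^{\rm od}$ and ${\rm FSh}_w^{\mathrm{pol}}$ as an explicit subspace, cut out by a finite system of linear equations, of the $(w+1)$-dimensional space $\Q[X,Y]_w$ of homogeneous polynomials of degree $w$ in two variables, and then compare these systems head-on. Throughout I write $T=\bigl(\begin{smallmatrix}1&1\\0&1\end{smallmatrix}\bigr)$, $S=\bigl(\begin{smallmatrix}0&-1\\1&0\end{smallmatrix}\bigr)$, $U=\bigl(\begin{smallmatrix}1&-1\\1&0\end{smallmatrix}\bigr)$, and let $\varepsilon$ denote the involution $X\mapsto -X$, all acting on $\Q[X,Y]_w$ by the degree-$w$ slash action; in this notation the defining relations of $W_w^{\rm od}$ are the cocycle relations $P|(1+S)=0$ and $P|(1+U+U^2)=0$ together with the oddness $P|(1+\varepsilon)=0$.

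For part (i) (with $w$ even), the first step is to unwind Definition~\ref{def:lin_sh_space}: the linear shuffle relation in depth two is obtained by inserting the depth-two shuffle-product expansion of $G_r^\shuffle(\tau)\,G_s^\shuffle(\tau)$ into the identity $G_{r,s}^{\shuffle,S}=G_{r,s}^\shuffle+(-1)^s G_r^\shuffle G_s^\shuffle+(-1)^{r+s}G_{s,r}^\shuffle$ and reading off the resulting $\Q$-linear relations among $\{G_{r,s}^{\shuffle,S}\mid r+s=w+2\}$, with ${\rm LSh}_w^{(2)}$ the annihilator of the corresponding relation vectors inside $\Q[X,Y]_w$. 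The crucial observation is that the binomial coefficients $\binom{a-1}{r-1}$ occurring in the shuffle product are exactly the entries of the slash action of powers of $T$, while the signs $(-1)^s$ and $(-1)^{r+s}$ realise $\varepsilon$ and the exchange $X\leftrightarrow Y$; so I expect the conditions defining ${\rm LSh}_w^{(2)}$ to collapse to the vanishing of $P$ under one explicit element of the algebra generated by the slash operators of $T$, $S$ and $\varepsilon$. The second step is then to check that this element lies in the left ideal generated by $1+S$, $1+U+U^2$ and $1+\varepsilon$; granting this, the cocycle relations together with oddness force $P\in{\rm LSh}_w^{(2)}$, which proves $W_w^{\rm od}\subseteq{\rm LSh}_w^{(2)}$. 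This is a short formal computation in the group algebra once the first step has been carried out.

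For part (ii) (with $w$ odd), I would first recall from \cite{Matthes} the definition of the Fay-shuffle space and single out its polynomial part ${\rm FSh}_w^{\mathrm{pol}}$ as the explicit subspace of $\Q[X,Y]_w$ cut out by the polynomial shadow of the Fay relation together with the shuffle relation for elliptic double zeta values. The key step is to write down a single linear change of variables on $\Q[X,Y]_w$ --- I expect something close to $X\mapsto X$, $Y\mapsto -X-Y$, possibly composed with $X\leftrightarrow Y$ or twisted by a global sign --- and to verify that it carries the defining equations of ${\rm FSh}_w^{\mathrm{pol}}$ bijectively onto those of ${\rm LSh}_w^{(2)}$. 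Injectivity is automatic, and surjectivity follows either by exhibiting the inverse substitution or, as an independent check, by matching dimensions via $\dim_\Q{\rm LSh}_w^{(2)}=\lfloor (w+2)/3\rfloor$ from Theorem~\ref{thm:dim_LSh2} against Matthes's count for ${\rm FSh}_w^{\mathrm{pol}}$. Conceptually the substitution should work because, after clearing denominators, both the Fay relation and the linear shuffle relation reduce to the same pattern --- a shuffle product corrected by a three-term cyclic symmetrisation --- and it is precisely this three-term cyclicity that makes the $\mathfrak{S}_3$-module structure govern $\dim{\rm LSh}_w^{(2)}$.

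The step I expect to be the main obstacle is, for part (i), the bookkeeping needed to see that the linear shuffle conditions genuinely collapse to one clean element of the group algebra: one must keep careful track of the signs $(-1)^s$ and $(-1)^{r+s}$ and of the depth-drop terms produced when $G_r^\shuffle G_s^\shuffle$ is expanded into depth-two pieces, including any boundary contributions coming from the shuffle regularisation of \cite{BT}. For part (ii), the delicate point is rather to match Matthes's conventions exactly --- correctly isolating the ``polynomial part'' inside the full Fay-shuffle space and pinning down the normalisation of the change of variables --- since the Fay relation arises analytically, from the Fay identity for the Kronecker function, and bears no manifest resemblance to the combinatorial shuffle relation until the dictionary between the two has been fixed. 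Once that dictionary is in place, both verifications become finite and routine.
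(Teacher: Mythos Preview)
Your plan is in the right spirit and close to the paper's own argument, but two of your concrete expectations are off and would cost you time if you pursued them literally.

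For (i), there is no need to ``unwind Definition~\ref{def:lin_sh_space}'' from scratch: Lemma~\ref{lem:LSh_2_S_3} already packages the depth-two linear shuffle conditions as $P|(1-\varepsilon')=0$ and $P|(1+\gamma+\gamma^2)=0$, so there are \emph{two} group-algebra elements, not one. With this in hand the inclusion is short: your $U$ satisfies $U^2=\gamma$ and $U^3=1$, so $1+\gamma+\gamma^2=1+U+U^2$ is literally one of the period-polynomial relations; and the $\varepsilon'$-invariance $P(-Y,-X)=P(X,Y)$ follows from $P|(1+S)=0$ together with oddness. This is exactly what the paper does (phrased via \eqref{eq:LSh2} directly), so once you use Lemma~\ref{lem:LSh_2_S_3} your ``ideal membership'' step becomes a two-line check.

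For (ii), the expectation of a \emph{single} linear change of variables is the real obstacle. For odd $w$ the spaces ${\rm FSh}_w^{\mathrm{pol}}=W_w$ and ${\rm LSh}_w^{(2)}$ are both $\varepsilon'$-invariant, but the cyclic condition uses $\gamma'$ for one and $\gamma$ for the other; since $\gamma'=\delta\gamma\delta$ and conjugation by $\delta$ sends $\varepsilon'$-invariants to $\varepsilon'$-anti-invariants when $w$ is odd, no single $g\in{\rm GL}_2(\Z)$ will intertwine the two systems. The paper's explicit isomorphism is accordingly a genuine \emph{two-term} operator,
\[
Q\longmapsto Q\big|\gamma'(1-\varepsilon')\delta,\qquad P\longmapsto -\tfrac{1}{3}\,P\big|\gamma(1+\varepsilon)\delta,
\]
i.e.\ a substitution followed by an (anti)symmetrisation, and one checks directly that these are well-defined and mutually inverse using only the relations $\varepsilon'(1+\gamma'+\gamma'^2)=(1+\gamma'+\gamma'^2)\varepsilon'$ and $(1-\varepsilon')(1+\varepsilon')=0$. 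So your strategy is fine, but you should look for an element of the group algebra rather than of the group.
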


\subsection{Organization of the article}

The contents of this paper are organized as follows. 
In Section~2, we first recall some basic facts about multiple zeta values and fix our notation. We then review the results of Bachmann and the third-named author from \cite{BT}, including the Fourier expansion of the multiple Eisenstein series, its relation with the Goncharov coproduct and the regularized multiple Eisenstein series $G_{\bk}^\shuffle(\tau)$.
Furthermore, an alternative formulation of $G_{\bk}^\shuffle(\tau)$ based on the Ihara group law and a possible rational structure of the space of multiple Eisenstein series will be discussed. The detailed study of the relation between the Goncharov coproduct and the Ihara group law (Theorem~\ref{thm:Gon_Ih}) will be developed in Appendix~\ref{appendix:Ihara-Goncharov}.
In Section~3, we review the series representation and the linear shuffle relation of symmetric multiple zeta values, and carry out an analogous analysis for symmetric multiple Eisenstein series. We also present a numerical observation on the $\Q$-vector space spanned by symmetric multiple Eisenstein series.
Sections~4 and 5 are devoted to the definition and study of the linear shuffle space. We mainly consider the case when the depth is $2$. In this case, connections with the theory of modular forms and elliptic double zeta values can be observed.
In Section~6, we prove the main results, namely Theorems~\ref{thm:modular_relations},
\ref{thm:main}, and~\ref{thm:main2}.
A part of the proof, namely the rank estimation of a certain integer matrix used
to establish the lower bound in Theorem~\ref{thm:main} (ii), is postponed to Appendix~\ref{appendix:rank_estimate}.

\section{Multiple Eisenstein series}
In this section, following \cite{BT}, we overview the theory of multiple Eisenstein series and their shuffle regularization which we shall use  in Sections \ref{ssc:MES} and \ref{ssc:Shuffle_regularization}. Section \ref{subsec:alg_setup} is a preliminary subsection summarizing algebraic setups on multiple zeta values.
On this occasion, we also mention some updates on this theory. 
Specifically, these include the description of the shuffle regularization based on the noncommutative generating series and the Ihara group law in Section \ref{ssc:Ihara}, and the result concerning a rational structure of the $\Q$-vector space of multiple Eisenstein series in Section \ref{ssc: comparison_LR}.

\subsection{Algebraic setups on multiple zeta values}\label{subsec:alg_setup}
We set up basic notation on multiple zeta values.
Throughout the paper, a tuple of positive integers is said to be an \emph{index}. 
For an index $\bk=(k_1,\ldots,k_d)$, $\wt(\bk)=k_1+\cdots+k_d$ and $\dep(\bk)=d$ are called the \emph{weight} and the \emph{depth} of $\bk$, respectively.
If $\bk=\varnothing$ is the empty index, we set $\wt(\varnothing)=\dep(\varnothing)=0$. Following this convention, we regard $\mathbb{N}^0$ as a symbol denoting the singleton $\{\varnothing\}$.

For an index $\bk=(k_1,\ldots,k_d)$ with $k_d\ge2$, we define the \emph{multiple zeta value} $\zeta(\bk)$ by the absolutely convergent series
\begin{equation}\label{eq:MZV_def} 
\zeta(\bk):=\sum_{0<n_1<\cdots<n_d}\frac{1}{n_1^{k_1}\cdots n_d^{k_d}}.
\end{equation}
We set $\zeta(\varnothing)=1$.
For a non-negative integer $k$, let $\mathcal{Z}_k$ be the $\Q$-vector space spanned by all multiple zeta values of weight $k$ and $\mathcal{Z}:=\bigoplus_{k\ge0}\mathcal{Z}_k$ the formal direct sum of $\mathcal{Z}_k$.
The space $\mathcal{Z}$ forms a graded $\Q$-algebra; in other words, the product of two multiple zeta values of weight $k$ and $l$ is written as a $\Q$-linear combination of multiple zeta values of weight $k+l$.

Let us recall two algebraic structures of $\mathcal{Z}$. Refer, for example, to \cite[Sections 1, 2, 3]{IKZ06} for details.
Let $\mathfrak{H}=\Q\langle e_0,e_1\rangle$ be the non-commutative polynomial algebra in two indeterminates $e_0$ and $e_1$ over $\Q$. We use the symbol $\mathfrak{H}_\shuffle$ when we regard $\mathfrak{H}$ as the \emph{shuffle algebra} equipped with the \emph{shuffle product} $\shuffle \, \colon \mathfrak{H}\times \mathfrak{H}\to \mathfrak{H}$.
The $\Q$-bilinear map $\shuffle$ is defined inductively by $e_aw\shuffle e_bw' =e_a(w\shuffle e_bw')+e_b(e_aw\shuffle w')$ for $a,b\in \{0,1\}$ and any words $w,w'$ in $e_0$ and $e_1$ with the initial condition $1\shuffle w=w=w\shuffle 1$.
Set $\mathfrak{H}^1:=\Q+e_1\mathfrak{H}$, and $\mathfrak{H}^0:=\Q+e_1\mathfrak{H}e_0$.
They form subalgebras $\mathfrak{H}^1_\shuffle$ and $\mathfrak{H}^0_\shuffle$ of $\mathfrak{H}_\shuffle$ satisfying  $\mathfrak{H}^0_\shuffle \subset \mathfrak{H}^1_\shuffle \subset \mathfrak{H}_\shuffle$.
For an integer $k\ge1$, we write $e_k=e_1e_0^{k-1}$ and, for an index $\bk=(k_1,\ldots,k_d)$, we put
\[ e_{\bk} :=e_{k_1}\cdots e_{k_d}=e_1e_0^{k_1-1}\cdots e_1e_0^{k_d-1}.\]
The set $\{e_\bk \mid \bk \in \N^d , d\ge0\}$ forms a basis of $\mathfrak{H}^1$, where $e_\varnothing$ denotes $1$ by convention. 
The subspace $\mathfrak{H}^0$ is generated by 1 and all words that start with $e_1$ and end with $e_0$.

Due to the iterated integral representation of multiple zeta values, the $\Q$-linear map 
\[\mathsf{Z}\colon \mathfrak{H}^0 \longrightarrow \mathcal{Z};\quad e_{\bk}\longmapsto \zeta(\bk)\]
becomes a $\shuffle$-algebra homomorphism $\mathsf{Z}\colon \mathfrak{H}^0_\shuffle \longrightarrow \mathcal{Z}$.
Since $\mathfrak{H}_\shuffle\cong \mathfrak{H}_\shuffle^1[e_0]\cong \mathfrak{H}_\shuffle^0[e_0,e_1]$ hold as shuffle algebras, there is a unique $\shuffle$-algebra homomorphism 
\begin{align} \label{eq:Z_shuffle}
    \mathsf{Z}^\shuffle \colon\mathfrak{H}_\shuffle \longrightarrow \mathcal{Z} 
\end{align} 
such that $\mathsf{Z}^\shuffle(e_1)=\mathsf{Z}^\shuffle(e_0)=0$ and $\mathsf{Z}^\shuffle\big|_{\mathfrak{H}^0}=\mathsf{Z}$.
For each index $\bk\in \N^d$, we write $\zeta^\shuffle (\bk)$ for $\mathsf{Z}^\shuffle (e_\bk)$ and call it the \emph{shuffle regularization} of $\zeta(\bk)$.
Note that we have $\zeta^\shuffle (\bk)=\zeta(\bk)$ if the last component of $\bk$ is strictly greater than 1.

Note that the isomorphism $\mathfrak{H}_\shuffle\cong \mathfrak{H}_\shuffle^1 [e_0]$ is obtained from the fact that every word $w\in \mathfrak{H}$ is uniquely written as $w=\sum w_j \shuffle e_0^{\shuffle j}=w_0+w_1\shuffle e_0+w_2\shuffle e_0\shuffle e_0+\cdots$ with $w_j\in \mathfrak{H}^1$.
Taking the constant term $w_0$ of $w$, we get a projection
\begin{equation}\label{eq:reg_0}
{\rm reg}_0 : \mathfrak{H}_\shuffle\longrightarrow \mathfrak{H}_\shuffle^1;\quad w\longmapsto w_0.
\end{equation} 
This is the algebra homomorphism such that ${\rm reg}_0 (e_0)=0$.
For integers $k_1,\ldots,k_d\ge1$ and $n\geq 0$, the following formula is known to hold:
\[{\rm reg}_0(e_0^ne_{k_1,\ldots,k_d})=(-1)^n \sum_{\substack{l_1+\cdots+l_d=n\\ l_1,\ldots,l_d\ge0}} \prod_{j=1}^d \binom{k_j+l_j-1}{l_j}e_{k_1+l_1,\ldots,k_d+l_d}. \]

For $e_{k},e_{\ell} \in \mathfrak{H}^1$ with $k,\ell\in \N$ and any words $w,w'\in\mathfrak{H}^1$ in $e_0$ and $e_1$, we define  the \emph{stuffle product} $\ast$ on $\mathfrak{H}^1$ inductively by
\begin{align*}
 &e_{k}  w \ast e_{\ell}  w' = e_{k} ( w\ast e_{\ell} w') + e_{\ell}(e_{k} w\ast w') + e_{k+\ell} (w \ast  w'),
\end{align*}
with initial condition $w\ast 1=1\ast w=w$. Then $*$ is extended $\mathbb{Q}$-bilinearly to the whole $\mathfrak{H}^1$. 
The stuffle product corresponds to the operation of rewriting the product of two multiple sums 
into a sum of multiple sums.
For instance, we have
\begin{align*}
\zeta(k_1)\zeta(k_2) &=\sum_{0<n_1,n_2}\frac{1}{n_1^{k_1}n_2^{k_2}} =\left( \sum_{0<n_1<n_2}+ \sum_{0<n_1<n_2}+ \sum_{0<n_1<n_2}\right)\frac{1}{n_1^{k_1}n_2^{k_2}}\\
&=\zeta(k_1,k_2)+\zeta(k_2,k_1)+\zeta(k_1+k_2) = \mathsf{Z}(e_{k_1}\ast e_{k_2})
\end{align*}
if both $k_1$ and $k_2$ are greater than or equal to $2$ (that is, when all the series converge absolutely).
This technique is available for multiple sums defined on a totally ordered set.
Later we use this for our objects, namely, the (symmetric) multiple Eisenstein series.
We write $\mathfrak{H}^1_\ast$ for $\mathfrak{H}^1$ when we regard it as a commutative $\Q$-algebra with respect to the stuffle product $\ast$. 
The $\mathfrak{H}^0$ forms a subalgebra of $\mathfrak{H}^1_\ast$, which is also denoted by $\mathfrak{H}^0_\ast$, and the map $\mathsf{Z}\colon \mathfrak{H}^0 \rightarrow \mathcal{Z}$ appearing before also becomes a homomorphism of $\mathbb{Q}$-algebras $\mathsf{Z}\colon \mathfrak{H}^0_\ast \rightarrow \mathcal{Z}$.

By combining the above two algebraic structures, we obtain the (finite) double shuffle relation: for any words $w, w' \in \mathfrak{H}^0$ in $e_0$ and $e_1$, we have
\[
\mathsf{Z}(w \ast w' - w \shuffle w') = \mathsf{Z}(w)\mathsf{Z}(w')-\mathsf{Z}(w)\mathsf{Z}(w')= 0.
\]

Hereafter, for a $\Q$-algebra $R$, a family of elements $A_{\bk}\in R$ indexed by all $\bk\in \mathbb{N}^d$ ($d\geq 0$) is said to satisfy the \emph{shuffle} (resp.~\emph{stuffle}) \emph{relation} if the $\Q$-linear map $\mathfrak{H}_\shuffle^1 \rightarrow R$ (resp.~$\mathfrak{H}_\ast^1 \rightarrow R$) defined by $e_\bk\mapsto A_\bk$ is an algebra homomorphism. 

\subsection{Multiple Eisenstein series and its Fourier expansion}
\label{ssc:MES}

We review in this subsection the definition of the multiple Eisenstein series and its Fourier expansion.

For a point $\tau$ of the complex upper half-plane, let us consider the lattice $\Z\tau+\Z$ of $\C$ generated by $\tau$ and $1$, and two subsets  
\begin{equation}\label{eq:P}
P_0:=\{n\in\Z \mid n>0\} \quad \mbox{and} \quad P_1:=\{m\tau+n\in \Z\tau+\Z\mid m>0\}.\end{equation}
of $\Z\tau+\Z$. A lattice point $\lambda\in \Z\tau+\Z$ is said to be {\em positive} if $\lambda$ is contained in the union $P_0\cup P_1$. We write $\lambda>0$ for positive $\lambda$.
We define an order $<$ on $\mathbb{Z}\tau+\mathbb{Z}$ by setting $\lambda<\mu$ for every  $\lambda,\mu \in\Z\tau+\Z$ satisfying $\mu-\lambda>0$.

\begin{definition}\label{def:MES}
For positive integers $k_1,\ldots,k_d\ge2$, we define the {\em multiple Eisenstein series} $G_{k_1,\cdots,k_d}(\tau)$ as the following conditionally convergent series:
\begin{equation*}
G_{k_1,\cdots,k_d}(\tau) :=  \lim_{M\rightarrow\infty}\lim_{N\rightarrow \infty} \sum_{\substack{0<\lambda_1< \cdots < \lambda_d\\ \lambda_1,\ldots,\lambda_d \in \Z_M\tau+\Z_N}} \lambda_1^{-k_1}\cdots \lambda_d^{-k_d} ,
\end{equation*}
where $\Z_M$ for $M\ge0$ is defined as a finite subset $\{m\in \Z \mid -M\le m\le M\}$ of $\Z$.
\end{definition}

Now let $\bk=(k_1,\ldots,k_d)$ be an index.
In order to describe the Fourier expansion of the multiple Eisenstein series $G_{\bk}(\tau)$, 
we define a holomorphic function $g_{\bk}(\tau)$ on the complex upper half-plane by the $q$-expansion
\begin{equation*}\label{eq2_4}
g_{\bk} (\tau) := \frac{(-2\pi i)^{\wt(\bk)}}{(k_1-1)!\cdots (k_d-1)!}\sum_{\substack{\ell_1,\ldots,\ell_d\in\N\\m_1,\ldots,m_d\in\N\\0<m_1<\cdots<m_d}} \ell_1^{k_1-1}\cdots \ell_d^{k_d-1}q^{\ell_1m_1+\cdots+\ell_dm_d}\qquad (q=e^{2\pi i\tau}).
\end{equation*}
For example, we have $g_k(\tau)=\frac{(-2\pi i)^k}{(k-1)!} \sum_{n>0} \big(\sum_{0<d\mid n}d^{k-1}\big)q^n$ for $k\ge1$, whose coefficients involve the divisor sums. 
The coefficients of $g_{\bk} (\tau)/(-2\pi i)^{\wt(\bk)}$ are in general called the \emph{multiple divisor sum}; see \cite[(1.1)]{BK} for details.

The multiple Eisenstein series is written as a $\mathcal{Z}$-linear combination of $g_\bk (\tau)$'s.
For example, we have
\begin{equation}\label{eq:example_fourier}
\begin{aligned}
G_{3,3}(\tau)&=\zeta(3,3)-6\zeta(4)g_2(\tau)+\zeta(3)g_3(\tau)+g_{3,3}(\tau),\\
 G_{2,2,3}(\tau)&= \zeta(2,2,3)+(4\zeta(2)\zeta(3)+3\zeta(2,3)+2\zeta(2,3))g_2(\tau)\\
 &\quad +(\zeta(2)^2+2\zeta(2,2))g_3(\tau)+3\zeta(3)g_{2,2}(\tau)+4\zeta(2) g_{2,3}(\tau)+g_{2,2,3}(\tau).
 \end{aligned}
 \end{equation}
In general, one can compute the Fourier expansion of the multiple Eisenstein series by using the following expression in terms of $g_{\bk}(\tau)$. For details, see \cite[Section 2]{BT}.

\begin{proposition}\label{prop:fourier}
Let $\bk=(k_1,\ldots,k_d)$ be an index of weight $k$ with $k_1,\ldots,k_d\ge2$.
Then, for each index $\bl$, there exists $a_{\bk,\bl} \in \mathcal{Z}_{k-\wt(\bl)}$ such that 
\[ G_{\bk}(\tau) = \zeta(\bk)+\sum_{j=1}^{d-1} \sum_{\substack{\bl\in \N^{j}\\ 1< \wt(\bl)<k}} a_{\bk,\bl}\,g_{\bl}(\tau)+g_\bk(\tau).\]
\end{proposition}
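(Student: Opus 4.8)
The plan is to prove the Fourier expansion of $G_{\bk}(\tau)$ by analyzing the conditionally convergent sum directly, exploiting the decomposition of the lattice order into the two ``directions'' $P_0$ and $P_1$ introduced in \eqref{eq:P}. The starting point is the observation that any strictly increasing chain $0<\lambda_1<\cdots<\lambda_d$ of lattice points can be recorded by the pattern of which consecutive differences $\lambda_{i+1}-\lambda_i$ (and $\lambda_1$ itself) lie in $P_0$ versus $P_1$. Summing first over all chains whose successive differences lie entirely in $P_0$ recovers the multiple zeta value $\zeta(\bk)$ (after taking the iterated limit, which is where conditional convergence must be handled with care, following \cite{BT}); the remaining patterns each contribute a product of a ``completely real'' piece (a multiple zeta value, or $1$) with a ``genuinely $\tau$-dependent'' piece. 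The $\tau$-dependent pieces are exactly the multiple Eisenstein-type sums over chains all of whose differences are forced to lie in $P_1$; these are evaluated via the Lipschitz-type formula $\sum_{m>0}(m\tau+n)^{-k}=\tfrac{(-2\pi i)^k}{(k-1)!}\sum_{\ell>0}\ell^{k-1}q^{\ell}$ (applied iteratively), which is precisely what produces the functions $g_{\bl}(\tau)$.

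Concretely, the key steps are as follows. First, I would set up the combinatorial bookkeeping: partition the index set of chains according to the subset $S\subseteq\{1,\dots,d\}$ of positions $i$ at which $\lambda_i$ (with $\lambda_0:=0$) first ``jumps into'' $P_1$, i.e. the minimal $i$ with $\lambda_i\in\Z\tau+\Z\setminus\Z$; because of the order on $\Z\tau+\Z$, once a partial sum leaves the real axis it stays off it, so the structure is that of an initial segment contributing to $\zeta$ and a terminal segment contributing to a $g$-function, possibly with intermediate combinatorics. Second, for the purely real initial segment I would cite the regularization/limit argument of \cite{BT} guaranteeing the iterated limit exists and equals $\zeta$ of the corresponding sub-index (this is the only place conditional convergence intervenes). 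Third, for the terminal segment I would apply the iterated Lipschitz formula to rewrite sums over $0<\mu_1<\cdots<\mu_j$ with all $\mu_i\in\Z\tau+\Z\setminus\Z$ as $g_{\bl}(\tau)$ for the appropriate sub-index $\bl$, picking up the coefficients; the constraint $1<\wt(\bl)<k$ is automatic since $\bl$ is a proper nonempty sub-index of $\bk$ and its entries are $\geq 2$ (so $\wt(\bl)\geq 2$, hence $>1$), while $\wt(\bl)=k$ would force $\bl=\bk$, i.e.\ $S=\{1,\dots,d\}$, but then there is no leading real block and one checks the leading term is $g_{\bk}(\tau)$ itself, and $\wt(\bl)=0$ would mean $S=\varnothing$, giving $\zeta(\bk)$. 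Collecting: the extreme cases $S=\varnothing$ and $S=\{1,\dots,d\}$ give the displayed endpoints $\zeta(\bk)$ and $g_{\bk}(\tau)$, and every intermediate $S$ contributes $a_{\bk,\bl}g_{\bl}(\tau)$ with $a_{\bk,\bl}\in\mathcal{Z}_{k-\wt(\bl)}$ a product of a multiple zeta value of the complementary weight with an explicit combinatorial/binomial constant, so that the total is a $\mathcal{Z}_{k-\wt(\bl)}$-coefficient as claimed.

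The main obstacle I anticipate is making the interchange-of-limits and rearrangement rigorous: the series defining $G_{\bk}(\tau)$ is only conditionally (not absolutely) convergent, so splitting it according to the pattern $S$ and then evaluating each block with the iterated limits $\lim_{M\to\infty}\lim_{N\to\infty}$ requires genuine justification rather than formal manipulation. The clean way to handle this is to follow the approach of \cite{BT} verbatim: work with the truncated lattice $\Z_M\tau+\Z_N$, carry out the decomposition at finite level where everything is a finite sum and rearrangement is trivial, and only at the very end take the iterated limit, invoking the convergence results of \cite[Section 2]{BT} for each block separately (the real block converges to an honest multiple zeta value as $N\to\infty$ uniformly in $M$, and the $P_1$-block converges because after applying Lipschitz it becomes an absolutely convergent $q$-series). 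Once this limiting procedure is under control, the remaining content is the combinatorial identity organizing the patterns, which is routine, and the extraction of the coefficients $a_{\bk,\bl}$, which is explicit; the statement as phrased only asserts existence of $a_{\bk,\bl}$ in the right weight-graded piece, so one does not even need the closed form. Hence essentially all the difficulty is concentrated in the analytic bookkeeping, which is already available in \cite{BT}, and the proposition follows by assembling those ingredients.
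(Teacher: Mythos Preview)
Your overall strategy is on the right track and matches the approach of \cite[Section~2]{BT}, to which the paper defers. However, there is a genuine gap in your treatment of the ``terminal segment''. You assert that the sum over chains $0<\mu_1<\cdots<\mu_j$ with all $\mu_i\in\Z\tau+\Z\setminus\Z$ becomes ``$g_{\bl}(\tau)$ for the appropriate sub-index $\bl$'' after applying the Lipschitz formula. This is not correct: the lattice order permits consecutive $\mu_i$ to share the \emph{same} imaginary part (namely when $\mu_{i+1}-\mu_i\in P_0$), whereas $g_{\bl}(\tau)$ by definition sums only over strictly increasing $0<m_1<\cdots<m_j$. So the complex block splits further according to which runs of $\mu_i$ have equal imaginary part, and each such run contributes a \emph{multitangent} sum $\sum_{n_1<\cdots<n_r}\prod_i(m\tau+n_i)^{-k_i}$, to which Lipschitz does not apply directly because the $n_i$ are coupled by the ordering.

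The missing step, carried out in \cite[Section~2]{BT}, is the reduction of these multitangent functions to $\mathcal{Z}$-linear combinations of monotangent functions $\Psi_p(z)=\sum_{n\in\Z}(z+n)^{-p}$ via partial fractions; only then does Lipschitz apply termwise. This reduction is exactly what produces MZV coefficients whose indices are \emph{not} sub-indices of $\bk$: in the paper's own example $G_{3,3}(\tau)=\zeta(3,3)-6\zeta(4)g_2(\tau)+\zeta(3)g_3(\tau)+g_{3,3}(\tau)$, the term $-6\zeta(4)g_2(\tau)$ comes from the block where $\lambda_1,\lambda_2\in P_1$ share the same imaginary part, and neither $4$ nor $2$ is an entry of $(3,3)$. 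Your claims that ``$\bl$ is a proper nonempty sub-index of $\bk$'' and that each $a_{\bk,\bl}$ is ``a product of a multiple zeta value of the complementary weight with an explicit combinatorial/binomial constant'' therefore already fail in depth two. The analytic limit interchanges you flag are indeed handled in \cite{BT}, but so is this structural reduction, and it is the heart of the argument rather than ``intermediate combinatorics''.
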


Since it plays an important role in the shuffle regularization, we here briefly recall the relationship between the Fourier expansion of multiple Eisenstein series and the {\em Goncharov coproduct} on formal iterated integrals $\mathbb{I}(a_0;a_1,\ldots,a_n;a_{n+1})$.
We use the notation $\Delta_G$ for the coproduct on the ring of formal iterated integrals; see \eqref{eq:GonCop} or \cite[(15)]{BT} for the definition.
See also \cite[Section 2]{G} for the original definition, in which the tensor factors are interchanged.
For $a_1,\ldots,a_n\in\{0,1\}$, using the relation $\mathbb{I}(1;a_n,\ldots,a_1;0)=(-1)^n\mathbb{I}(0;a_1,\ldots,a_n;1)$ (refer to \cite[Proposition 2.1]{G}) and identifying $\mathbb{I}(0;a_1,\ldots,a_n;1)$ with $e_{a_1}\cdots e_{a_n}$, one can define the coproduct $\Delta_G$ as a homomorphism of $\mathbb{Q}$-algebras 
\begin{equation}\label{eq:Delta_G}
\Delta_G \colon \mathfrak{H}_\shuffle \longrightarrow
\mathfrak{H}_\shuffle \otimes_{\mathbb{Q}} \mathfrak{H}_\shuffle.
\end{equation}
With respect to $\Delta_G$, the shuffle algebra $\mathfrak{H}_\shuffle$ becomes a Hopf algebra over $\Q$.
Since $\Delta_G(e_0)=1\otimes e_0+e_0\otimes 1$ holds, 
the coproduct $\Delta_G$ induces a Hopf algebra structure on the quotient  $\mathfrak{H}_\shuffle/(e_0)$, where $(e_0)$ denotes the ideal of $\mathfrak{H}_\shuffle$ generated by $e_0$.
The isomorphism $ \mathfrak{H}_\shuffle^1[e_0]\cong \mathfrak{H}_\shuffle$ gives rise to the algebra isomorphism $\mathfrak{H}_\shuffle^1\rightarrow \mathfrak{H}_\shuffle/(e_0);\ e_{\bk}\mapsto e_{\bk}+(e_0)$.
Its inverse is given by $\mathfrak{H}_\shuffle/(e_0)\rightarrow \mathfrak{H}_\shuffle^1; w+(e_0)\mapsto {\rm reg}_0(w)$, where ${\rm reg}_0:\mathfrak{H}_\shuffle\rightarrow \mathfrak{H}_\shuffle^1$ is defined in \eqref{eq:reg_0}.
Thus, the coproduct
\[\Delta_G^1:=({\rm reg}_0\otimes {\rm reg}_0)\circ \Delta_G\big|_{\mathfrak{H}_\shuffle^1}\colon \mathfrak{H}_\shuffle^1\longrightarrow \mathfrak{H}_\shuffle^1\otimes_\Q \mathfrak{H}_\shuffle^1\] 
induces a Hopf algebra structure on $\mathfrak{H}_\shuffle^1$.
As explained in \cite[Section 3]{BT}, it has the following expression in terms of the basis $\{e_\bk\mid \bk\in \N^d, d\ge0\}$ of $\mathfrak{H}^1_\shuffle$:
for each $\bk\in \N^d$ of weight $k$, we have
\begin{equation}\label{eq:Delta_G^1}
\Delta_G^1(e_\bk) = e_\bk \otimes 1 +\sum_{j=1}^{d-1} \sum_{\substack{\bl\in \N^{j}\\ 0< \wt(\bl)<k}} f_{\bk,\bl}\otimes e_{\bl}+1\otimes e_\bk
\end{equation}
for some $f_{\bk,\bl}\in \mathfrak{H}^1$ of weight $k-\wt(\bl)$, not necessarily a word.
For example, we have
\begin{equation*}\label{eq:example_Goncharov}
\begin{aligned}
\Delta_G^1(e_{3,3})&=e_{3,3}\otimes1-6e_4\otimes e_2+e_3\otimes e_3+1\otimes e_{3,3},\\
\Delta_G^1(e_{2,2,3})&= e_{2,2,3}\otimes 1+(4e_{2}\shuffle e_3+3 e_{2,3}+2e_{2,3})\otimes e_2\\
 &\quad +(e_2\shuffle e_2+2e_{2,2})\otimes e_3+3e_3\otimes e_{2,2}+4e_2\otimes e_{2,3}+1\otimes e_{2,2,3},
 \end{aligned}
 \end{equation*}
 which correspond to the Fourier expansions given in \eqref{eq:example_fourier}.
 This correspondence was first observed by Masanobu Kaneko and later studied by Stephanie Belcher.
It was completed in \cite[Theorem 1.1]{BT} as follows.
 
 \begin{theorem}\label{thm:Fourier_v.s._Goncharov} 
 Let $\mathcal{O}$ denote the ring of holomorphic functions on the complex upper half-plane and define the $\Q$-linear map $\mathsf{g}\colon\mathfrak{H}^1\to \mathcal{O}$ by $\mathsf{g}(e_\bk)=g_{\bk}(\tau)$.
 Then, for an index $\bk$ whose entries are all strictly greater than $1$, we have
 \begin{align} \label{eq:Gk_vs_coproduct}
 m\circ (\mathsf{Z} \otimes \mathsf{g} )\circ \Delta_G^1(e_\bk) = G_\bk (\tau),
 \end{align}
 where $m\colon \C\otimes_{\mathbb{Q}}\mathcal{O}\rightarrow\mathcal{O}$ denotes the natural multiplication map.
\end{theorem}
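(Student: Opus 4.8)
The plan is to compute the Fourier expansion of $G_{\bk}(\tau)$ directly from the defining lattice sum and then match it, coefficient by coefficient, with $m\circ(\mathsf{Z}\otimes\mathsf{g})\circ\Delta_G^1(e_{\bk})$; this simultaneously re-proves Proposition~\ref{prop:fourier}. Write $\lambda_i=m_i\tau+n_i$. Along any chain $0<\lambda_1<\cdots<\lambda_d$ one has $0\le m_1\le m_2\le\cdots\le m_d$, with $m_i=m_{i+1}$ forcing $n_i<n_{i+1}$; hence there is a unique cut index $j\in\{0,\dots,d\}$ with $m_1=\cdots=m_j=0<m_{j+1}\le\cdots\le m_d$. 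Then $\lambda_1,\dots,\lambda_j$ lie in $P_0$, the rest in $P_1$, and since $m_j=0<m_{j+1}$ the prefix runs over $0<n_1<\cdots<n_j$ independently of the suffix. Taking the inner ($N$) limit and then the outer ($M$) limit of the truncated sum yields
\[
G_{\bk}(\tau)=\sum_{j=0}^{d}\zeta(k_1,\dots,k_j)\,\mathfrak{g}_{k_{j+1},\dots,k_d}(\tau),
\qquad
\mathfrak{g}_{\bl}(\tau):=\sum_{\substack{\mu_1<\cdots<\mu_r\\ \mu_i\in P_1}}\mu_1^{-\ell_1}\cdots\mu_r^{-\ell_r},
\]
where $\bl=(\ell_1,\dots,\ell_r)$, and $\zeta(k_1,\dots,k_j)$ is a genuine multiple zeta value since every $k_i\ge2$ (and $\mathfrak{g}_{\varnothing}=\zeta(\varnothing)=1$). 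The point requiring care is the conditional convergence: the $N$-limit must be taken first, so that the Lipschitz (Eisenstein-summation) formula applies to the $P_1$-part for each fixed tuple of $m$'s before one sums over the $m$'s — exactly the order of limits in Definition~\ref{def:MES}.

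Next I would reduce $\mathfrak{g}_{\bl}(\tau)$ to the generators $g_{\boldsymbol a}(\tau)$. Grouping a $P_1$-chain into its maximal runs of constant $m$-coordinate, a run with $m$-coordinate $m$ and exponents $\bl^{(t)}=(\ell_{a+1},\dots,\ell_{a+s})$ contributes the multitangent function $T(\bl^{(t)};m\tau):=\sum_{n_{a+1}<\cdots<n_{a+s}}\prod_i(m\tau+n_i)^{-\ell_i}$, and distinct runs have strictly increasing $m$-coordinates, so
\[
\mathfrak{g}_{\bl}(\tau)=\sum_{\bl=\bl^{(1)}\cdots\bl^{(p)}}\ \sum_{0<m_1<\cdots<m_p}\ \prod_{t=1}^{p}T(\bl^{(t)};m_t\tau),
\]
the outer sum over decompositions of the word $\bl$ into nonempty consecutive blocks. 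I then invoke Bouillot's reduction of multitangent functions, $T(\bl^{(t)};z)=\sum_{a\ge1}c_{\bl^{(t)},a}\,T(a;z)$ with $c_{\bl^{(t)},a}\in\mathcal{Z}$ an explicit $\Q$-linear combination of shuffle-regularised multiple zeta values and binomial coefficients — the weight-one monotangent $T(1;z)$ really occurs, and its nonzero constant term is one source of the regularisation — together with the Lipschitz formula $T(a;z)=\sum_{n}(z+n)^{-a}=\frac{(-2\pi i)^{a}}{(a-1)!}\sum_{\ell>0}\ell^{a-1}e^{2\pi i\ell z}$. Substituting and using $\sum_{0<m_1<\cdots<m_p}\prod_t T(a_t;m_t\tau)=g_{a_1,\dots,a_p}(\tau)$ gives $\mathfrak{g}_{\bl}(\tau)=\sum_{\boldsymbol a}b_{\bl,\boldsymbol a}\,g_{\boldsymbol a}(\tau)$ with $b_{\bl,\boldsymbol a}\in\mathcal{Z}$.

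Combining the two steps, the coefficient of a fixed $g_{\boldsymbol a}(\tau)$ in $G_{\bk}(\tau)$ is $\sum_{j}\zeta(k_1,\dots,k_j)\,b_{(k_{j+1},\dots,k_d),\boldsymbol a}$, the constant term is $\zeta(\bk)$, and the coefficient of $g_{\bk}$ is $1$. In view of \eqref{eq:Delta_G^1}, it remains to show this quantity equals $\mathsf{Z}(f_{\bk,\boldsymbol a})$; equivalently, that the operation ``cut the chain at the $P_0/P_1$ boundary and then reduce each constant-$m$ block'' reproduces term for term the regularised Goncharov coproduct $\Delta_G^1=({\rm reg}_0\otimes{\rm reg}_0)\circ\Delta_G|_{\mathfrak{H}^1}$ evaluated through $\mathsf{Z}\otimes\mathsf{g}$. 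This is plausible a priori: $\Delta_G$ cuts the integration path of $\mathbb{I}(0;a_1,\dots,a_n;1)$ into subpaths just as the lattice chain is cut, the sub-integral ``based at $0$'' carries the $\zeta^{\shuffle}$'s, the two copies of ${\rm reg}_0$ are the ${\rm reg}_0$ implicit in $\mathfrak{H}_\shuffle^1\cong\mathfrak{H}_\shuffle/(e_0)$, and Bouillot's reduction of one block is $\Delta_G$ applied to that block followed by ${\rm reg}_0$; the whole statement can be packaged as compatibility of the noncommutative generating series $\sum_{\bk}g_{\bk}(\tau)\,e_{\bk}$ with $\Delta_G$.

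I expect this last identification to be the main obstacle. The two sides are manifestly ``the same combinatorics'', but turning that into a proof means matching the explicit signs and binomial coefficients produced by the partial-fraction expansions underlying Bouillot's multitangent reduction against those in the explicit formula for $\Delta_G^1$, while carefully tracking the interplay among the weight-one monotangents $T(1;z)$, the Eisenstein summation, and ${\rm reg}_0$ (this is also precisely where $\zeta^{\shuffle}$ rather than $\zeta$ must appear). By contrast, Steps 1 and 2 are essentially bookkeeping once the order-of-limits subtlety is handled.
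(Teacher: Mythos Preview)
The paper does not prove this theorem; it is quoted from \cite[Theorem~1.1]{BT}. Your outline is precisely the strategy carried out there: the $P_0/P_1$ split of the lattice chain (your Step~1), Bouillot's reduction of multitangent functions to monotangents with multiple-zeta-valued coefficients (your Step~2), and the identification of the resulting coefficients with the left tensor factors of $\Delta_G^1$ (your Step~3). You are right that Step~3 is where the real work lies; in \cite{BT} this matching is carried out by writing both sides in terms of explicit partial-fraction and binomial combinatorics and comparing termwise, and it occupies the bulk of \S3 there. Your observation about $T(1;z)$ and the role of Eisenstein summation is also on point---this is exactly why the statement is formulated via $\Delta_G^1=({\rm reg}_0\otimes{\rm reg}_0)\circ\Delta_G$ rather than $\Delta_G$ directly, and why the extension to indices containing $1$'s (Theorem~\ref{thm:G^sh}) requires replacing $\mathsf{g}$ by $\mathsf{g}^\shuffle$.
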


\subsection{Shuffle regularization of multiple Eisenstein series}
\label{ssc:Shuffle_regularization}

Since $\Delta_G^1$ is an algebra homomorphism, by replacing the maps $\mathsf{Z}$ and $\mathsf{g}$ in \eqref{eq:Gk_vs_coproduct} with appropriate {\em algebra homomorphisms} $\mathfrak{H}_\shuffle^1\rightarrow \mathcal{Z}$ and $\mathfrak{H}_\shuffle^1\rightarrow \mathcal{O}$, we can define a \emph{shuffle regularization} $G^\shuffle_\bk(\tau)$ of the multiple Eisenstein series $G_{\bk}(\tau)$.
Concerning $\mathsf{Z}$, one only has to replace it with the shuffle-regularized homomorphism $\mathsf{Z}^{\shuffle}\vert_{\mathfrak{H}^1_\shuffle}\colon \mathfrak{H}^1_\shuffle \rightarrow \mathcal{Z}$ defined as in \eqref{eq:Z_shuffle}.
On the other hand, since the function $g_\bk(\tau)$ does not satisfy the shuffle relation with respect to $\bk$, we need to introduce the shuffle regularization of $\mathsf{g}$, which is established in \cite[(25)]{BT}.

Let us consider a formal power series
\[ H^{(d)}\tbinom{k_1,\ldots,k_d}{x_1,\ldots,x_d}:=\sum_{0<m_1<\cdots<m_d} \prod_{j=1}^d e^{m_j x_j} \left( \frac{q^{m_j}}{1-q^{m_j}}\right)^{k_j}\in \Q\llbracket x_1,\ldots,x_d, q\rrbracket \]
for positive integers $k_1,\ldots,k_d$. This series serves as a generating function of $g_\bk(\tau)$'s when we set $q$ to be the holomorphic function $e^{2\pi i\tau}$ on the upper half-plane.
Indeed, we have
\[ \sum_{k_1,\ldots,k_d\ge1} \tilde{g}_{k_1,\ldots,k_d}(\tau) x_1^{k_1-1}\cdots x_d^{k_d-1}= H^{(d)}\tbinom{1,\ldots,1,1}{x_d-x_{d-1},\ldots,x_2-x_1,x_1}\big|_{q=e^{2\pi i\tau}} \]
where $\tilde{g}_{\bk}(\tau)$ denotes the normalized function $g_{\bk}(\tau)/(-2\pi i)^{\wt(\bk)}$ for each index $\bk$. We here remark that the series $H^{(d)}\tbinom{k_1,\ldots,k_d}{x_1,\ldots,x_d}$ satisfies the stuffle relation. 
For example, we have $H^{(1)}\tbinom{k_1}{x_1}H^{(1)}\tbinom{k_2}{x_2}=H^{(2)}\tbinom{k_1,k_2}{x_1,x_2}+H^{(2)}\tbinom{k_2,k_1}{x_2,x_1}+H^{(1)}\tbinom{k_1+k_2}{x_1+x_2}$.
Taking a weighted-average of $H^{(r)}\tbinom{k_1,\ldots,k_r}{x_1,\ldots,x_r}$'s, we define another formal power series $h^{(d)}(x_1,\ldots,x_d)$ by
\[ h^{(d)}(x_1,\ldots,x_d) := \sum_{r=1}^d \sum_{(j_1,\ldots,j_r)} \frac{1}{j_1!\cdots j_r !} H^{(r)}\tbinom{j_1,\ldots,j_r}{x_{j_1}',\ldots,x_{j_r}'},\]
where the inner sum runs over all tuples $(j_1,\ldots,j_r)$ of $r$ positive integers satisfying $j_1+j_2+\cdots+j_r=d$, and we set $x_{j_1}'=x_1+\cdots+x_{j_1},x_{j_2}'=x_{j_1+1}+\cdots+x_{j_1+j_2},\ldots,$ and $x_{j_r}'=x_{j_1+\cdots+j_{r-1}+1}+\cdots+x_d$. 
The explicit descriptions of $h^{(d)}(x_1,\ldots,x_d)$ for $d=1$ and $2$ are given in Example \ref{ex:g_shuffle}.
This averaging procedure, due to Hoffman \cite[Theorem 2.5]{H1}, transforms objects satisfying the stuffle relation into those satisfying the `index-shuffle' relation.
For instance, we have $h^{(1)}(x_1)h^{(1)}(x_2)=h^{(2)}(x_1,x_2)+h^{(2)}(x_2,x_1)$ which follows by substituting the defining expression of $h^{(d)}$ into the stuffle relation for $H^{(d)}$.
See \cite[Section 4.1]{BT} for details.

We are now ready to introduce the shuffle regularization of $g_{\bk}(\tau)$. We first introduce $\tilde{g}^\shuffle_{\bk}$ as a $q$-series appearing in the Taylor expansion of $h^{(d)}$.

\begin{definition}
For each index $\bk=(k_1,\ldots,k_d)$, we define the $q$-series $\tilde{g}_{\bk}^\shuffle \in \Q\llbracket q\rrbracket $ by
\begin{equation*}\label{def:g^sh} 
\sum_{k_1,\ldots,k_d\ge1} \tilde{g}_{k_1,\ldots,k_d}^\shuffle  \,x_1^{k_1-1}\cdots x_d^{k_d-1}:= h^{(d)}(x_d-x_{d-1},\ldots,x_2-x_1,x_1).
\end{equation*}
\end{definition}

Letting $q=e^{2\pi i\tau}$, we can view the $q$-series $ \tilde{g}_{\bk}^\shuffle$ as a holomorphic function on the upper half-plane, which is denoted as $ \tilde{g}_{\bk}^\shuffle (\tau):= \tilde{g}_{\bk}^\shuffle\big|_{q=e^{2\pi i\tau}}$.

\begin{example} \label{ex:g_shuffle}
When $d=1$, the series $h^{(1)}(x)$ coincides with $H^{(1)}\tbinom{1}{x}$ by definition, and thus we obtain
\begin{equation*}\label{eq;g_k-q-exp}
\tilde{g}^\shuffle_k(\tau)=\tilde{g}_k(\tau) = \frac{(-1)^k}{(k-1)!} \sum_{n\ge1} \left(\sum_{d\mid n} d^{k-1}\right) q^n \quad \text{for } k\ge1.
\end{equation*}
For the case $d=2$, we have 
\begin{align*}
h^{(2)}(x_2-x_1,x_1)&=H^{(2)}\tbinom{1,1}{x_2-x_1,x_1}+\frac12 H^{(1)}\tbinom{2}{x_2} \\
&= \sum_{r,s\ge1} \tilde{g}_{r,s}\,x_1^{r-1}x_2^{s-1}+\frac12 \sum_{m\ge1} e^{mx_2}\left(\frac{q^m}{1-q^m}\right)^2.
\end{align*} 
Hence, for positive integers $r,s\ge1$, we obtain
\[ \tilde{g}_{r,s}^\shuffle (\tau)= \tilde{g}_{r,s}(\tau) + \frac{\delta_{r,1}}{2(r+s-1)!} \sum_{\ell,n>0} (\ell-1) n^{r+s-1} e^{2\pi i \tau \ell n},\]
where $\delta_{a,b}$ denotes the Kronecker delta.
From this, $\tilde{g}_{r,s}^\shuffle (\tau)= \tilde{g}_{r,s}(\tau) $ holds for $r\ge2$.
See also \cite[Section 5]{B1} for more examples.
\end{example}

The definition of the shuffle regularization $\mathsf{g}^\shuffle$ of $\mathsf{g}$ is now obvious.

\begin{theorem}\label{thm:g^sh}
For each index $\bk$ whose entries are all strictly greater than $1$, we have $\tilde{g}^\shuffle_{\bk}(\tau)=\tilde{g}_\bk(\tau)$.
Furthermore, the $\Q$-linear map 
\[\mathsf{g}^\shuffle :\mathfrak{H}^1_\shuffle  \longrightarrow \Q[\pi i]\llbracket q\rrbracket  ;\quad e_{\bk}\longmapsto g_{\bk}^\shuffle\]
is an algebra homomorphism, where we set $g_{\bk}^\shuffle :=(-2\pi i)^{\wt(\bk)}\tilde{g}_{\bk}^\shuffle$ for each index $\bk$.

\end{theorem}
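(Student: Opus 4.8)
The plan is to deduce both assertions from the corresponding properties of the generating series $h^{(d)}$, so that nothing needs to be proved ``by hand'' at the level of individual $q$-series. For the first claim, that $\tilde g^\shuffle_{\bk}(\tau)=\tilde g_{\bk}(\tau)$ whenever every entry of $\bk=(k_1,\ldots,k_d)$ satisfies $k_j\ge 2$, I would compare the defining expansions of $h^{(d)}$ and $H^{(d)}$. By definition
\[
\sum_{k_1,\ldots,k_d\ge1}\tilde g^\shuffle_{k_1,\ldots,k_d}\,x_1^{k_1-1}\cdots x_d^{k_d-1}
= h^{(d)}(x_d-x_{d-1},\ldots,x_2-x_1,x_1),
\]
and $h^{(d)}$ is a sum over compositions $(j_1,\ldots,j_r)$ of $d$ of terms $\tfrac{1}{j_1!\cdots j_r!}H^{(r)}\binom{j_1,\ldots,j_r}{\cdot}$; the composition $(1,1,\ldots,1)$ contributes exactly $H^{(d)}\binom{1,\ldots,1}{x_d-x_{d-1},\ldots,x_1}$, whose coefficients are the $\tilde g_{\bk}$. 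So I must show that every proper composition $(j_1,\ldots,j_r)$ with some $j_i\ge2$ contributes only monomials $x_1^{k_1-1}\cdots x_d^{k_d-1}$ in which at least one $k_j=1$. Concretely, a factor $H^{(r)}\binom{\ldots,j_i,\ldots}{\ldots,x'_{j_i},\ldots}$ contributes a power of the single grouped variable $x'_{j_i}=x_{j_1+\cdots+j_{i-1}+1}+\cdots+x_{j_1+\cdots+j_i}$, and since $j_i\ge2$ this variable is a sum of $\ge2$ of the differences $x_a-x_{a-1}$ (or $x_1$); after re-expanding in $x_1,\ldots,x_d$ the total degree in the block $\{x_{j_1+\cdots+j_{i-1}+1},\ldots,x_{j_1+\cdots+j_i}\}$ is at most the exponent carried by $x'_{j_i}$, which forces the exponent of at least one $x_\ell$ in that block to be $0$, i.e.\ $k_\ell=1$. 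Spelling this combinatorial bookkeeping out carefully is the one genuinely technical point; it is essentially the observation recorded in \cite[Section~4]{BT} that the averaging procedure only modifies the components of $\tilde g$ indexed by tuples containing a $1$, and for an all-$\ge2$ index the ``correction terms'' vanish.

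For the algebra homomorphism statement, I would argue as follows. By Hoffman's averaging identity recalled in the excerpt, the family $\{h^{(d)}\}_{d\ge0}$ satisfies the \emph{index-shuffle} relation: $h^{(d_1)}h^{(d_2)}$ equals the sum of $h^{(d_1+d_2)}$ over all shuffles of the two ordered blocks of variables. Substituting the change of variables $x_j\mapsto x_d-x_{d-1},\ldots$ built into the definition of $\tilde g^\shuffle_{\bk}$ turns this, at the level of coefficients, into precisely the shuffle product $\shuffle$ on the index side; that is, the $\Q$-linear map $\tilde{\mathsf g}^\shuffle\colon \mathfrak H^1_\shuffle\to\Q\llbracket q\rrbracket$, $e_{\bk}\mapsto \tilde g^\shuffle_{\bk}$, is multiplicative. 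This is the same formal mechanism by which $\mathsf g$ would have been a homomorphism had $g_{\bk}$ itself satisfied the shuffle relation; the point of the averaging was exactly to repair the failure of that. Finally, rescaling by the weight-graded factor $(-2\pi i)^{\wt(\bk)}$ is compatible with products because $\wt$ is additive under $\shuffle$, so $e_{\bk}\mapsto g^\shuffle_{\bk}=(-2\pi i)^{\wt(\bk)}\tilde g^\shuffle_{\bk}$ is again an algebra homomorphism, now landing in $\Q[\pi i]\llbracket q\rrbracket$.

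The main obstacle, as indicated, is not conceptual but the careful translation between the three indexing conventions in play: the variables $x_1,\ldots,x_d$ of $h^{(d)}$, the ``difference'' variables $x_d-x_{d-1},\ldots,x_1$ used to read off $\tilde g^\shuffle_{\bk}$, and the words $e_{\bk}$ of $\mathfrak H^1$. One must check that the index-shuffle relation for the $h^{(d)}$ (where ``shuffle'' permutes blocks of variables) becomes, after the linear change of variables, the honest shuffle product $e_{\bk}\shuffle e_{\bl}$ on $\mathfrak H^1$, with the correct signs and multiplicities; this is done in \cite[Section~4.1]{BT}, and I would cite it rather than reprove it. Everything else — multiplicativity of the averaging, additivity of weight, the vanishing of correction terms on all-$\ge2$ indices — is then routine, and the two displayed identities in Example~\ref{ex:g_shuffle} serve as consistency checks for $d=1,2$.
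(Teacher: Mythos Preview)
Your proposal is correct and follows the same route as the result's original proof in \cite{BT}. Note that the present paper does not actually prove Theorem~\ref{thm:g^sh}: immediately after stating it and Theorem~\ref{thm:G^sh}, the text simply says ``For details, see \cite[Theorem~4.6]{BT} and \cite[Theorem~4.10]{BT}, respectively.'' Your sketch reproduces that argument faithfully: the first claim comes from the observation that every term of $h^{(d)}$ with $r<d$ involves, after the substitution, fewer than $d$ of the variables $x_1,\ldots,x_d$ and hence contributes only to monomials with some $k_\ell=1$; the second claim is Hoffman's exponential map turning the stuffle structure of the $H^{(r)}$ into the index-shuffle structure of the $h^{(d)}$, which becomes the genuine shuffle on $\mathfrak{H}^1_\shuffle$ after the change of variables.

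One small wording issue: in your degree argument you speak of ``the total degree in the block $\{x_{j_1+\cdots+j_{i-1}+1},\ldots,x_{j_1+\cdots+j_i}\}$'' being bounded, but after the substitution $z_a\mapsto x_{d-a+1}-x_{d-a}$ the grouped variable $z'_{j_i}$ is a telescoping sum equal to the difference of just two of the $x_\ell$'s (the block endpoints), so in fact only $r$ of the $d$ variables $x_1,\ldots,x_d$ appear at all in that term. This is stronger than a degree bound and makes the vanishing immediate. With that adjustment your argument is exactly the one the paper defers to.
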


The shuffle regularization of $G_{\bk}(\tau)$ is then defined as follows.

\begin{theorem}\label{thm:G^sh}
For each index $\bk$, we define a $q$-series $G^\shuffle_\bk\in \mathcal{Z}[\pi i]\llbracket q\rrbracket $ by setting
\begin{equation*}\label{eq:def_G^sh} 
G^\shuffle_\bk :=  m\circ (\mathsf{Z}^\shuffle \otimes \mathsf{g}^\shuffle )\circ \Delta_G^1(e_\bk) .
\end{equation*}
Then, the $\Q$-linear map 
\[\mathsf{G}^\shuffle \colon \mathfrak{H}^1_\shuffle  \longrightarrow \mathcal{Z}[\pi i]\llbracket q\rrbracket  ;\quad e_{\bk}\longmapsto G_{\bk}^\shuffle\]
is an algebra homomorphism.
Moreover $G_{\bk}^\shuffle(\tau)$ coincides with $G_\bk(\tau)$ for any index $\bk$ whose entries are all strictly greater than $1$, where we set $G_{\bk}^\shuffle(\tau):=G_{\bk}^\shuffle\big|_{q=e^{2\pi i\tau}}$.
\end{theorem}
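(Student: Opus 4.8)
The plan is to verify the two assertions in Theorem~\ref{thm:G^sh} separately, reducing the first to the multiplicativity of the ingredients and the second to Theorem~\ref{thm:Fourier_v.s._Goncharov}. First I would check that $\mathsf{G}^\shuffle$ is an algebra homomorphism. The map is defined as $\mathsf{G}^\shuffle = m\circ(\mathsf{Z}^\shuffle\otimes \mathsf{g}^\shuffle)\circ \Delta_G^1$, so it is a composite of four $\Q$-linear maps, three of which are already known to be algebra homomorphisms: $\Delta_G^1\colon \mathfrak{H}^1_\shuffle \to \mathfrak{H}^1_\shuffle\otimes_\Q \mathfrak{H}^1_\shuffle$ is an algebra homomorphism because $\Delta_G$ is (as recalled around \eqref{eq:Delta_G}, the Goncharov coproduct makes $\mathfrak{H}_\shuffle$ a Hopf algebra, and $\Delta_G^1$ is obtained from it via the algebra isomorphism $\mathfrak{H}^1_\shuffle\cong \mathfrak{H}_\shuffle/(e_0)$); $\mathsf{Z}^\shuffle\vert_{\mathfrak{H}^1_\shuffle}$ is an algebra homomorphism by \eqref{eq:Z_shuffle}; and $\mathsf{g}^\shuffle$ is an algebra homomorphism by Theorem~\ref{thm:g^sh}. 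Hence $\mathsf{Z}^\shuffle\otimes\mathsf{g}^\shuffle$ is an algebra homomorphism $\mathfrak{H}^1_\shuffle\otimes_\Q\mathfrak{H}^1_\shuffle \to \mathcal{Z}[\pi i]\llbracket q\rrbracket\otimes_\C(\cdots)$, and composing with the multiplication map $m$ of the (commutative) target ring $\mathcal{Z}[\pi i]\llbracket q\rrbracket$ yields an algebra homomorphism, since for a coalgebra map $\Delta$ and multiplicative maps $f,g$ into a commutative ring the convolution $m\circ(f\otimes g)\circ\Delta$ is multiplicative. One should double-check that the image indeed lies in $\mathcal{Z}[\pi i]\llbracket q\rrbracket$: this follows from \eqref{eq:Delta_G^1}, which shows $\Delta_G^1(e_\bk)$ is a finite sum of tensors $f_{\bk,\bl}\otimes e_\bl$ with $f_{\bk,\bl}\in\mathfrak{H}^1$ of weight $k-\wt(\bl)$, mapped by $\mathsf{Z}^\shuffle\otimes\mathsf{g}^\shuffle$ into $\mathcal{Z}_{k-\wt(\bl)}\otimes (-2\pi i)^{\wt(\bl)}\Q\llbracket q\rrbracket$.

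Next I would prove the coincidence $G^\shuffle_\bk(\tau) = G_\bk(\tau)$ for indices $\bk$ with all entries $\ge 2$. Here I would argue that on such indices the ingredients of $\mathsf{G}^\shuffle$ agree with those of the formula in Theorem~\ref{thm:Fourier_v.s._Goncharov}: on $\mathfrak{H}^0\subset\mathfrak{H}^1$ one has $\mathsf{Z}^\shuffle = \mathsf{Z}$ by construction of $\mathsf{Z}^\shuffle$, and by Theorem~\ref{thm:g^sh} one has $\tilde g^\shuffle_\bl(\tau) = \tilde g_\bl(\tau)$, hence $\mathsf{g}^\shuffle(e_\bl) = g^\shuffle_\bl = g_\bl = \mathsf{g}(e_\bl)$, whenever $\bl$ has all entries $\ge 2$. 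The point is that in the expansion \eqref{eq:Delta_G^1} for $e_\bk$ with all $k_i\ge 2$, the surviving terms $f_{\bk,\bl}\otimes e_\bl$ involve only such admissible data: the left tensor factors $f_{\bk,\bl}\in\mathfrak{H}^1$ of weight $\ge 2$ are evaluated by $\mathsf{Z}^\shuffle$, which agrees with $\mathsf{Z}$ after reducing into $\mathfrak{H}^0$ via ${\rm reg}_0$ (indeed $\mathsf{Z}^\shuffle = \mathsf{Z}\circ{\rm reg}_0'$ in the appropriate sense), and the right tensor factors $e_\bl$ appearing with nonzero coefficient necessarily have last entry $\ge 2$, so $\mathsf{g}^\shuffle(e_\bl)=\mathsf{g}(e_\bl)$. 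Therefore $m\circ(\mathsf{Z}^\shuffle\otimes\mathsf{g}^\shuffle)\circ\Delta_G^1(e_\bk) = m\circ(\mathsf{Z}\otimes\mathsf{g})\circ\Delta_G^1(e_\bk)$, which equals $G_\bk(\tau)$ by Theorem~\ref{thm:Fourier_v.s._Goncharov}.

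The main obstacle I expect is the bookkeeping in the second part: one must verify precisely which $e_\bl$ occur on the right-hand tensor slots of $\Delta_G^1(e_\bk)$ and check that, for admissible $\bk$, none of them force an evaluation of $\mathsf{g}^\shuffle$ outside the admissible range where it agrees with $\mathsf{g}$ — equivalently, that the discrepancy $\mathsf{g}^\shuffle - \mathsf{g}$ (which is supported on indices with first or last entry equal to $1$) is annihilated. A clean way to organize this is to recall from \cite[Section 3]{BT} that $\Delta_G^1$ is \emph{coradical-filtered} in a way compatible with the integral structure, so that the right factors $e_\bl$ that pair against $\mathsf{g}^\shuffle$ are exactly the ones that appear in the classical Fourier expansion (Proposition~\ref{prop:fourier}), which are admissible by construction. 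Equivalently, one may simply invoke that both $G^\shuffle_\bk$ and the quantity $m\circ(\mathsf{Z}\otimes\mathsf{g})\circ\Delta_G^1(e_\bk)$ are defined by the \emph{same} formula once one knows $\mathsf{Z}^\shuffle$ and $\mathsf{g}^\shuffle$ restrict to $\mathsf{Z}$ and $\mathsf{g}$ on the relevant subspaces; the remaining work is to spell out "relevant subspace" and cite the structure of $\Delta_G^1$ from \cite{BT}. This is routine given the machinery already set up, and no genuinely new idea is needed beyond careful tracking of weights and of the admissibility of the indices in \eqref{eq:Delta_G^1}.
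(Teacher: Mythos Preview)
Your outline is essentially correct and matches what the paper does, which is simply to cite \cite[Theorems~4.6 and 4.10]{BT} rather than give a self-contained proof. The first part (multiplicativity of $\mathsf{G}^\shuffle$ as a convolution of algebra homomorphisms through the Hopf coproduct $\Delta_G^1$) is exactly right and standard.

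One imprecision worth flagging in your second part: you write that the right tensor factors $e_{\bl}$ ``necessarily have last entry $\ge 2$, so $\mathsf{g}^\shuffle(e_{\bl})=\mathsf{g}(e_{\bl})$,'' but Theorem~\ref{thm:g^sh} requires \emph{all} entries of $\bl$ to be $\ge 2$, not just the last one. You seem to realise this in your obstacle paragraph, but the statement as written is not sufficient. The actual structural fact needed, proved in \cite{BT}, is that for $\bk$ with all entries $\ge 2$ the coproduct $\Delta_G^1(e_{\bk})$ lands in $\mathfrak{H}^{\ge 2}\otimes \mathfrak{H}^{\ge 2}$, where $\mathfrak{H}^{\ge 2}$ denotes the subalgebra spanned by $e_{\bl}$ with all entries $\ge 2$; this simultaneously ensures the left factors lie where $\mathsf{Z}^\shuffle=\mathsf{Z}$ and the right factors lie where $\mathsf{g}^\shuffle=\mathsf{g}$. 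Your appeal to ``\cite[Section~3]{BT}'' for this is the same move the paper makes, so there is no genuine gap relative to the paper's treatment.
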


For details, see \cite[Theorem 4.6]{BT} and \cite[Theorem 4.10]{BT}, respectively.
Below in Theorem \ref{thm:Gamma_ME}, we provide an alternative expression of $G^\shuffle_\bk$ based on the {\em Ihara group law}, which is dual to the Goncharov coproduct.

\begin{example}
When $d=1$, the regularized Eisenstein series $G^\shuffle_k$ for $k\geq 1$ is given by
\begin{equation*}\label{eq:q-exp_single}
G^\shuffle_k= \zeta^\shuffle(k) + g_k^\shuffle.
\end{equation*}
The case $d=2$ can be found in \cite[Example 4.9]{BT}; for $r,s\ge1$ we have
\begin{equation}\label{eq:q-exp_double}
G^\shuffle_{r,s}= \zeta^\shuffle(r,s) + \sum_{p=1}^{r+s-1} C_{r,s}^p \zeta^\shuffle(p)g^\shuffle_{r+s-p} + g^\shuffle_{r,s},
\end{equation}
where $C_{r,s}^p$ is an integer defined by $C_{r,s}^p = \delta_{r,p} + (-1)^{r} \binom{p-1}{r-1} + (-1)^{p-s} \binom{p-1}{s-1}$.
Here $\delta_{a,b}$ denotes the Kronecker delta and we set $ \binom{n}{m}=0$ unless the integers $n$ and $m$ satisfy $n\geq m\geq 0$.
These explicit expressions will be effectively used in the proof of Theorem \ref{thm:main} (ii).
\end{example}

Since $\Z\tau+\Z$ is totally ordered under the order $<$, we see that, from the series expression of $G_{\bk}(\tau)$ proposed in Definition \ref{def:MES}, the multiple Eisenstein series $G_\bk(\tau)$ satisfies the stuffle relation for each index $\bk$ whose entries are all strictly greater than $1$. Note that the subspace spanned by $e_{\bk}$ with each component of $\bk$ strictly greater than $1$ forms a subalgebra of $\mathfrak{H}^1_\ast$.
From this, one can obtain a weaken version of the ``double shuffle relations'' for the $q$-series $G_{\bk}^\shuffle$, which is proposed in \cite[Theorem 1.2]{BT}.

\begin{corollary}\label{cor:dsr_mes}
For indices $\bk$ and $\bl$ whose entries are all strictly greater than $1$, we have
\[ \mathsf{G}^\shuffle (e_\bk \shuffle e_{\bl}-e_\bk\ast e_{\bl})=0.\]
\end{corollary}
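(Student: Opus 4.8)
The plan is to deduce the weakened double shuffle relation for the $q$-series $G^\shuffle_\bk$ from two facts already available: first, that the assignment $e_\bk\mapsto G^\shuffle_\bk$ is an algebra homomorphism $\mathsf{G}^\shuffle\colon\mathfrak{H}^1_\shuffle\to\mathcal{Z}[\pi i]\llbracket q\rrbracket$ (Theorem~\ref{thm:G^sh}), and second, that for indices $\bk,\bl$ whose entries all exceed $1$ the multiple Eisenstein series $G_\bk(\tau)$ coincides with $G^\shuffle_\bk(\tau)$ and satisfies the stuffle relation, because $\Z\tau+\Z$ is totally ordered under $<$ and $G_\bk(\tau)$ is given by the honest conditionally convergent series of Definition~\ref{def:MES}. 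From the first fact, $\mathsf{G}^\shuffle(e_\bk\shuffle e_\bl)=G^\shuffle_\bk\,G^\shuffle_\bl$; the task is therefore to show $\mathsf{G}^\shuffle(e_\bk\ast e_\bl)=G^\shuffle_\bk\,G^\shuffle_\bl$ as well, for such $\bk,\bl$.

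First I would record that the subspace $\mathfrak{H}^{\ge2}:=\langle e_\bk\mid\text{all components of }\bk\text{ are }\ge2\rangle_\Q$ is simultaneously a subalgebra of $\mathfrak{H}^1_\shuffle$ and of $\mathfrak{H}^1_\ast$, so that both $e_\bk\shuffle e_\bl$ and $e_\bk\ast e_\bl$ lie in $\mathfrak{H}^{\ge2}$ and hence involve only indices with all entries $\ge2$. For any such index $\bm$ we have $G^\shuffle_\bm(\tau)=G_\bm(\tau)$ by the last clause of Theorem~\ref{thm:G^sh}. Next I would invoke the stuffle relation for the genuine multiple Eisenstein series: since $G_\bm(\tau)$ is defined as the conditionally convergent nested sum over the totally ordered set $(\Z\tau+\Z,<)$, multiplying two such sums $G_\bk(\tau)$ and $G_\bl(\tau)$ and partitioning the product index set by the relative order of the two sets of summation variables yields exactly $\sum$ over the stuffle terms, i.e. $G_\bk(\tau)G_\bl(\tau)=\mathsf{g}\text{-evaluation of }e_\bk\ast e_\bl$ — concretely, writing $\mathsf{G}$ for the map $e_\bm\mapsto G_\bm(\tau)$ on $\mathfrak{H}^{\ge2}$, one gets $\mathsf{G}(e_\bk\ast e_\bl)=G_\bk(\tau)G_\bl(\tau)$. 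Combining, $\mathsf{G}^\shuffle(e_\bk\ast e_\bl)\big|_{q=e^{2\pi i\tau}}=\mathsf{G}(e_\bk\ast e_\bl)=G_\bk(\tau)G_\bl(\tau)=G^\shuffle_\bk(\tau)G^\shuffle_\bl(\tau)=\mathsf{G}^\shuffle(e_\bk\shuffle e_\bl)\big|_{q=e^{2\pi i\tau}}$, and since a $q$-series in $\mathcal{Z}[\pi i]\llbracket q\rrbracket$ is determined by its associated holomorphic function, equality holds at the level of $q$-series, giving $\mathsf{G}^\shuffle(e_\bk\shuffle e_\bl-e_\bk\ast e_\bl)=0$.

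The one genuine point requiring care — and the step I expect to be the main obstacle — is the justification of the stuffle relation for $G_\bk(\tau)$ itself, since these series are only \emph{conditionally} convergent (defined via the iterated limit $\lim_M\lim_N$ of Definition~\ref{def:MES}). One must check that the decomposition of the product into stuffle pieces is compatible with this specific summation order, i.e. that splitting the truncated double sum over $\Z_M\tau+\Z_N$ according to the order relation $<$ and then taking the limits in the prescribed order reproduces each individual limit $G_\bm(\tau)$; this is exactly the kind of bookkeeping carried out in \cite{BT}, and the cleanest route is to cite \cite[Theorem 1.2]{BT} (or its proof) for this compatibility rather than to re-derive it. The remaining ingredients — that $\mathfrak{H}^{\ge2}$ is a sub-bialgebra for both products, and that $G^\shuffle_\bm=G_\bm$ on it — are immediate from the statements already recalled, so modulo the cited input the corollary follows in a few lines.
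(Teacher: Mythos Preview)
Your proposal is correct and follows essentially the same approach as the paper: the paragraph preceding the corollary in the paper argues precisely that $G_\bk(\tau)$ satisfies the stuffle relation because $\Z\tau+\Z$ is totally ordered, that the span of $e_\bk$ with all components $\ge 2$ is a stuffle subalgebra, and then defers to \cite[Theorem~1.2]{BT}, exactly as you do. Your write-up is more explicit about the intermediate steps (in particular the passage between the $q$-series and the holomorphic function, and the conditional-convergence caveat), but the logical skeleton is identical.
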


Here are a few examples of linear relations obtained from Corollary \ref{cor:dsr_mes}:
\begin{align*}
0=G_4^\shuffle  - 4G_{1,3}^\shuffle, \qquad  0=G_5^\shuffle- 6G_{1,4}^\shuffle -2G_{2,3}^\shuffle .
\end{align*}
It should be noted that Corollary \ref{cor:dsr_mes} does not exhaust all linear relations of $G_\bk^{\shuffle}$'s.
For example, Kaneko's sum formulas \cite[Proposition 3 and Corollary]{Kaneko_DE} 
(see also \cite[Remark 4.16]{BT})
\begin{align} \label{eq:sum_formula}
 \sum_{j=1}^{k-2} G_{j,k-j}^{\shuffle}=G_k^\shuffle- \dfrac{(2\pi i)^2}{2(k-2)} q\frac{d}{dq} G_{k-2}^\shuffle \quad \text{for } k\ge3
\end{align}
and
\begin{equation} \label{eq:sum_formula_2}
 \sum_{\substack{j=2\\ j:{\rm even}}}^{k-2} G_{j,k-j}^{\shuffle}=\frac34 G_k^\shuffle - \dfrac{(2\pi i)^2}{2(k-2)} q\frac{d}{dq} G_{k-2}^\shuffle \quad \text{for } k\ge4 \text{ even}
\end{equation}
are not deduced from Corollary \ref{cor:dsr_mes} due to the appearance of the derivative.
Note that we have $q\frac{d}{dq}=\frac{1}{2\pi i}\frac{d}{d\tau}$ when $q=e^{2\pi i\tau}$.

Table \ref{table:dim_E_k} below lists the numerical dimensions of the $\Q$-vector spaces $\mathcal{E}_k^{\rm adm}$ spanned by $\{G_{k_1,\ldots,k_d}^\shuffle (\tau)\mid d\ge0,k_1,\ldots,k_{d-1}\ge1,k_d\ge2,k_1+\cdots+k_d=k\}$; refer to \cite[p.210]{BT}. 
Note that from $\mathfrak{H}_\shuffle^1\cong \mathfrak{H}_\shuffle^{0}[e_1]$, the whole space $\mathcal{E}:=\mathsf{G}^\shuffle(\mathfrak{H}^1)$ coincides with $\mathcal{E}^{\rm adm}[G_1^\shuffle(\tau)]$, where $\mathcal{E}^{\rm adm}=\mathsf{G}^\shuffle(\mathfrak{H}^0)$.

\begin{table}[H] 
\caption{Numerical dimensions of $\mathcal{E}^{\rm adm}_k$}
\label{table:dim_E_k}
\vspace*{1em}
\begin{tabular}{c|ccccccccccccc}
$k$ & 2 & 3 & 4 & 5 & 6 & 7 \\ \hline
$\dim_\Q \mathcal{E}_k^{\rm adm}$ & 1 & 2 & 3 & 6 & 10 & 18 
\end{tabular}
\end{table}

\begin{remark}
Corollary \ref{cor:dsr_mes} may also hold for some indices involving 1's.
In fact, Henrik Bachmann  showed that $\mathsf{G}^\shuffle (e_{k} \shuffle e_{1,\ell}-e_{k}\ast e_{1,\ell})=0$ holds for $k,\ell\ge2$ (unpublished).
\end{remark}

\subsection{Alternative expression using the Ihara group law} \label{ssc:Ihara}
We give another formulation of the $q$-series $G_\bk^{\shuffle}$, which has not  appeared in the literature.
For a $\Q$-algebra $R$, let 
\[\gp(R):=\{S\in R\llangle  X_0,X_1\rrangle^\times \mid \Delta_{\shuffle} (S)=S\otimes S\}\]
be the set of group-like elements in the algebra $R\llangle X_0,X_1\rrangle$ of noncommutative power series with coefficients in $R$, where $\Delta_\shuffle\colon R\llangle X_1,X_2\rrangle \rightarrow R\llangle X_1,X_2\rrangle \otimes_R R\llangle X_1,X_2\rrangle$ is the shuffle coproduct given by $\Delta_\shuffle(X_a)=X_a\otimes 1+1\otimes X_a$ for $a\in\{0,1\}$.
The set $\gp(R)$ forms a group with respect to the concatenation product.
It is isomorphic to the set of $R$-valued points of the affine group scheme $\gp:={\rm Spec}(\mathfrak{H}_\shuffle)$ over $\Q$.

For a power series $S\in R\llangle X_0,X_1\rrangle$ and any word $e_{a_1}\cdots e_{a_n}\in \mathfrak{H}_\shuffle$ with $a_j\in\{0,1\}$, we define $\langle S\mid e_{a_1}\cdots e_{a_n}\rangle $ to be the coefficient of the word $X_{a_1}\cdots X_{a_n}$ in the power series $S$, and then extend it linearly to the pairing  $R\llangle X_0,X_1\rrangle \times \mathfrak{H}_{\shuffle} \rightarrow R$. 
The constant term of $S$ is denoted as $\langle S\mid e_{\varnothing}\rangle$.
A power series $S\in R\llangle X_0,X_1\rrangle^\times$ is group-like if and only if $\langle S\mid w\shuffle w'\rangle =\langle S\mid w\rangle \langle S\mid w'\rangle$ holds for any $w,w'\in \mathfrak{H}_\shuffle$, or equivalently, the $\Q$-linear map $\mathfrak{H}_\shuffle \rightarrow R$ given by $w\mapsto \langle S\mid w\rangle$ is an algebra homomorphism.

For simplicity, we write $(e_{a_1}\cdots e_{a_n})^\vee = X_{a_1}\cdots X_{a_n}$ and let $\{e_0,e_1\}^\times$ denote the set of all words in $e_0$ and $e_1$, containing $e_\varnothing$. We set $e_\varnothing^\vee=1$ as convention.
Then, a power series $S\in R\llangle X_0,X_1\rrangle$ can be written as $S=\sum_{w\in \{e_0,e_1\}^\times}  \langle S\mid w\rangle w^\vee$.
Since $\mathsf{Z}^\shuffle$ is an algebra homomorphism, we have
\begin{align} \label{eq:Phi_shuffle}
\Phi_{\shuffle}=\Phi_\shuffle(X_0,X_1):=\sum_{w\in \{e_0,e_1\}^\times}  \mathsf{Z}^\shuffle (w)w^\vee 
\end{align}
as an example of a group-like element of $R\llangle X_0,X_1\rrangle$.

Let $\sigma \colon \gp(R) \rightarrow \gp(R)$ denote the shuffle antipode;
namely, the anti-automorphism given by $\sigma(X_{a_1}X_{a_2}\cdots X_{a_n})=(-1)^nX_{a_n}\cdots X_{a_2}X_{a_1}$.
For any group-like element $S\in \gp(R)$, we have $\sigma(S)S=1$.
From this reason, we write $S^{-1}$ for $\sigma(S)$ ($S\in \gp(R)$) hereafter.
In the dual side, for a word $e_{a_1}\cdots e_{a_n}\in \{e_0,e_1\}^\times$, we write 
$\epsilon(e_{a_1}\cdots e_{a_n})=(-1)^n e_{a_n}\cdots e_{a_1}$.
Then, for any $w\in \{e_0,e_1\}^\times$, we get
\begin{equation}\label{eq:inverse_cof}
\langle S^{-1}\mid w\rangle = \langle S\mid \epsilon(w)\rangle.
\end{equation}

The Ihara group law $\circ$ is defined for $A,B\in \gp(R)$ by
\begin{equation*}
A(X_0,X_1)\circ B(X_0,X_1) :=B (X_0,A X_1 A^{-1}) A \in \gp(R).
\end{equation*}
This binary operator $\circ$ equips the set $\gp(R)$ of group-like elements with a group structure.  See \cite[Proposition 2.5]{Brown} for details.
The correspondence between the Goncharov coproduct and the Ihara group law is described as follows.
From \eqref{eq:Delta_G}, we recall the algebra homomorphism $\Delta_G\colon\mathfrak{H}_{\shuffle} \to \mathfrak{H}_{\shuffle} \otimes_{\Q} \mathfrak{H}_{\shuffle}$ induced by the Goncharov coproduct on the ring of formal iterated integrals.

\begin{theorem}\label{thm:Gon_Ih}
Let $R$ be a $\Q$-algebra.
For $A,B\in \gp(R)$, define algebra homomorphisms $\alpha\colon \mathfrak{H}_\shuffle\rightarrow R$ and $\beta\colon \mathfrak{H}_\shuffle\rightarrow R$ by $\alpha(w)=\langle A \mid w \rangle$ and $\beta(w)=\langle B \mid w \rangle$ for each $w\in \mathfrak{H}_\shuffle$, respectively.
Then, we obtain
\begin{equation*}\label{eq:ihara} \langle A\circ B \mid w\rangle = m\circ (\alpha \otimes  \beta )\circ \Delta_G(w)
\end{equation*}
for every $w\in \mathfrak{H}_\shuffle$.
\end{theorem}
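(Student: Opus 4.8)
The plan is to prove Theorem~\ref{thm:Gon_Ih} by reducing the identity to the compatibility of two algebra structures, exploiting the fact that both sides are algebra homomorphisms in a suitable sense. Since $A, B \in \gp(R)$ are group-like, both $\alpha$ and $\beta$ are algebra homomorphisms $\mathfrak{H}_\shuffle \to R$, and since $\Delta_G$ is an algebra homomorphism (as recalled in \eqref{eq:Delta_G}), the map $w \mapsto m\circ(\alpha\otimes\beta)\circ\Delta_G(w)$ is an algebra homomorphism $\mathfrak{H}_\shuffle \to R$. On the other hand, $A\circ B \in \gp(R)$ is again group-like, so $w \mapsto \langle A\circ B \mid w\rangle$ is also an algebra homomorphism $\mathfrak{H}_\shuffle \to R$. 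Thus it suffices to check that the two homomorphisms agree on the algebra generators $e_0$ and $e_1$ of $\mathfrak{H}_\shuffle$.

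First I would record the two easy facts needed for the generator check. Recall $\Delta_G(e_0) = e_0\otimes 1 + 1\otimes e_0$, so $m\circ(\alpha\otimes\beta)\circ\Delta_G(e_0) = \alpha(e_0) + \beta(e_0) = \langle A\mid e_0\rangle + \langle B\mid e_0\rangle$. On the group-law side, one unwinds the definition $A\circ B = B(X_0, AX_1A^{-1})A$; the coefficient of $X_0$ in this series is the sum of the coefficient of $X_0$ coming from $B(X_0,\dots)$ — which is $\langle B\mid e_0\rangle$, since the substitution only alters $X_1$ — and the coefficient of $X_0$ coming from the trailing factor $A$, which is $\langle A\mid e_0\rangle$; there is no cross term because both $A$ and $B$ have constant term $1$. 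For $e_1$ one uses $\Delta_G(e_1) = e_1\otimes 1 + 1\otimes e_1$ (the iterated integral $\mathbb{I}(0;1;1)$ has only the two extreme terms in the Goncharov coproduct), giving $\alpha(e_1)+\beta(e_1)$; on the other side the coefficient of $X_1$ in $B(X_0, AX_1A^{-1})A$ picks up $\langle B\mid e_1\rangle$ from the linear term of $B$ in its second argument (the leading terms of $A$ and $A^{-1}$ being $1$) plus $\langle A\mid e_1\rangle$ from the trailing $A$. Hence the two homomorphisms coincide on $e_0$ and $e_1$, and therefore on all of $\mathfrak{H}_\shuffle$.

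The step I expect to require the most care is not the generator comparison itself but the justification that both sides genuinely \emph{are} algebra homomorphisms with respect to the \emph{shuffle} product, so that agreement on generators forces agreement everywhere. For the left-hand side this is the statement that $A\circ B$ is group-like whenever $A, B$ are — this is part of the assertion that $\circ$ makes $\gp(R)$ a group, for which one invokes \cite[Proposition 2.5]{Brown} and the characterization of group-likeness via $\langle S\mid w\shuffle w'\rangle = \langle S\mid w\rangle\langle S\mid w'\rangle$ recorded in the excerpt. For the right-hand side one needs that $\Delta_G$ lands in $\mathfrak{H}_\shuffle\otimes_\Q\mathfrak{H}_\shuffle$ as a morphism of shuffle algebras (equivalently, that the Goncharov coproduct is multiplicative for the shuffle product on formal iterated integrals), which is exactly how $\Delta_G$ was set up in \eqref{eq:Delta_G}, together with the fact that $m\colon R\otimes_\Q R\to R$ and $\alpha\otimes\beta$ are algebra maps. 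Once these structural facts are in place, the proof closes immediately; a full verification of the coefficient-of-$X_i$ computations above, done carefully from the definition of the Ihara group law, is routine and will be the only computational content.

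Finally, I would remark that the detailed bookkeeping — in particular a clean treatment of the conjugation $AX_1A^{-1}$ and the substitution into $B$, and a check that the argument is insensitive to the ordering convention for the tensor factors in $\Delta_G$ (the excerpt notes the original reference \cite{G} interchanges them) — is deferred to Appendix~\ref{appendix:Ihara-Goncharov}, as announced in the introduction; there one may wish to phrase the whole argument more symmetrically using the pairing \eqref{eq:inverse_cof} relating $\langle S^{-1}\mid w\rangle$ and $\langle S\mid \epsilon(w)\rangle$.
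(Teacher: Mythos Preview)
Your reduction to generators contains a fatal gap: $e_0$ and $e_1$ are \emph{not} algebra generators of $\mathfrak{H}_\shuffle$. They generate $\mathfrak{H}$ freely under the concatenation product, but under the shuffle product $\mathfrak{H}_\shuffle$ is (by Radford's theorem) a polynomial algebra on the infinite set of Lyndon words in $e_0,e_1$. For instance, the Lyndon word $e_0e_1$ cannot be written as a shuffle polynomial in $e_0$ and $e_1$: from $e_0\shuffle e_1=e_0e_1+e_1e_0$ one only obtains the symmetrized combination. Concretely, one can build two distinct shuffle-algebra homomorphisms $\mathfrak{H}_\shuffle\to\Q$ that both vanish on $e_0$ and $e_1$ but take different values on $e_0e_1$. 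So agreement on $e_0$ and $e_1$ does not force agreement on all of $\mathfrak{H}_\shuffle$, and the argument does not close.

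The paper's proof in Appendix~\ref{appendix:Ihara-Goncharov} proceeds instead by a direct coefficient comparison: one expands $B(X_0,AX_1A^{-1})$ in depth, writes the coefficient of an arbitrary word $w'$ as a sum over deconcatenations $w'=u_0e_1u_1\cdots e_1u_k$ and further deconcatenations of each $u_j$, and then matches this combinatorics term by term with the explicit Goncharov coproduct formula~\eqref{eq:GonCop}, using~\eqref{eq:inverse_cof} to handle the factors $A^{-1}$. Your observation that both sides are shuffle-algebra homomorphisms is correct and does reduce the problem to checking the identity on a generating set, but that generating set must be all Lyndon words, at which point one is essentially back to a general-word computation of the same flavor as the paper's.
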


A sketch of the proof of the above identity can be found in \cite[Section 2]{Brown}, although the setup is different.
We outline the proof in our setting in Appendix \ref{appendix:Ihara-Goncharov} for convenience.

\begin{corollary}\label{cor:Gon_Ih}
Let $A,B,\alpha,\beta$ be as in Theorem~$\ref{thm:Gon_Ih}$.
Suppose that $\alpha(e_0)=\beta(e_0)=0$.
Then, for each index $\bk$ of depth $d$, we have
\[ \langle A\circ B \mid e_\bk \rangle =  \beta(e_{\bk}) + \sum_{j=1}^{d-1} \sum_{\substack{\bl\in \N^{j}\\ 0< \wt(\bl)<k}} c_{\bk,\bl} \,\alpha(e_\bl) + \alpha(e_\bk)\]
for some $c_{\bk,\bl}\in \mathcal{B}_{k-\wt(\bl)}$, where $ \mathcal{B}_k$ denotes the $\Q$-vector space spanned by all $\beta(e_\bk)$ with $\wt(\bk)=k$.
\end{corollary}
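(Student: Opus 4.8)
\textbf{Proof proposal for Corollary~\ref{cor:Gon_Ih}.}
The plan is to derive this directly from Theorem~\ref{thm:Gon_Ih} by analyzing the shape of $\Delta_G(e_\bk)$ under the vanishing hypothesis $\alpha(e_0)=\beta(e_0)=0$. First I would apply Theorem~\ref{thm:Gon_Ih} to $w=e_\bk$, obtaining $\langle A\circ B\mid e_\bk\rangle = m\circ(\alpha\otimes\beta)\circ\Delta_G(e_\bk)$. The key reduction step is to pass from $\Delta_G$ on $\mathfrak{H}_\shuffle$ to the induced coproduct $\Delta_G^1$ on $\mathfrak{H}_\shuffle^1$ recalled in~\eqref{eq:Delta_G^1}: since $\alpha$ and $\beta$ both kill $e_0$, they factor through $\mathrm{reg}_0\colon\mathfrak{H}_\shuffle\to\mathfrak{H}_\shuffle^1$ (the algebra homomorphism sending $e_0\mapsto 0$), so $m\circ(\alpha\otimes\beta)\circ\Delta_G(e_\bk) = m\circ(\alpha\otimes\beta)\circ(\mathrm{reg}_0\otimes\mathrm{reg}_0)\circ\Delta_G(e_\bk) = m\circ(\alpha\otimes\beta)\circ\Delta_G^1(e_\bk)$, where in the last equality I restrict to $e_\bk\in\mathfrak{H}_\shuffle^1$ and use the definition $\Delta_G^1=(\mathrm{reg}_0\otimes\mathrm{reg}_0)\circ\Delta_G\big|_{\mathfrak{H}_\shuffle^1}$.

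Once the computation is expressed through $\Delta_G^1$, I would simply substitute the explicit expansion~\eqref{eq:Delta_G^1}, namely
\[
\Delta_G^1(e_\bk) = e_\bk\otimes 1 + \sum_{j=1}^{d-1}\sum_{\substack{\bl\in\N^j\\ 0<\wt(\bl)<k}} f_{\bk,\bl}\otimes e_\bl + 1\otimes e_\bk,
\]
and apply $m\circ(\alpha\otimes\beta)$ termwise. The outer terms give $\alpha(e_\bk)\beta(1)=\alpha(e_\bk)$ and $\alpha(1)\beta(e_\bk)=\beta(e_\bk)$ (both $\alpha$ and $\beta$ being unital algebra homomorphisms), while the middle terms give $\sum_{j,\bl}\alpha(f_{\bk,\bl})\,\beta(e_\bl)$. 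Setting $c_{\bk,\bl}:=\alpha(f_{\bk,\bl})$ yields the claimed formula; since $f_{\bk,\bl}\in\mathfrak{H}^1$ is homogeneous of weight $k-\wt(\bl)$ and $\beta$ preserves weight (it is the pairing against a group-like element whose coefficients $\beta(e_\bl)$ span $\mathcal{B}_{\wt(\bl)}$ in each weight), one checks that $c_{\bk,\bl}\in\mathcal{B}_{k-\wt(\bl)}$ — here I would note that weight-homogeneity of $f_{\bk,\bl}$ means it is a $\Q$-linear combination of $e_\bm$ (and shuffle products thereof) of weight $k-\wt(\bl)$, each of which maps under $\alpha$ into the weight-$(k-\wt(\bl))$ piece spanned by the $\beta(e_\bm)$'s, which is precisely $\mathcal{B}_{k-\wt(\bl)}$ by definition — more carefully, the coefficients $\langle A\mid w\rangle$ and $\langle B\mid w\rangle$ of the two group-like series are linked through the common weight grading on $\mathfrak{H}_\shuffle^1$, so $\alpha$ and $\beta$ have the same image in each weight, giving $\alpha(f_{\bk,\bl})\in\mathcal{B}_{k-\wt(\bl)}$.

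The only genuine subtlety — and the step I would be most careful about — is the very first one: justifying that $\alpha$ and $\beta$, which are \emph{a priori} defined on all of $\mathfrak{H}_\shuffle$, descend compatibly to $\mathfrak{H}_\shuffle^1\cong\mathfrak{H}_\shuffle/(e_0)$, and that evaluating $m\circ(\alpha\otimes\beta)\circ\Delta_G$ on $e_\bk$ is genuinely the same as evaluating $m\circ(\alpha\otimes\beta)\circ\Delta_G^1$ on $e_\bk$. This rests on the fact that $\alpha$ being an algebra homomorphism with $\alpha(e_0)=0$ forces $\alpha$ to annihilate the ideal $(e_0)$, hence factor as $\alpha = \bar\alpha\circ\mathrm{reg}_0$ for the induced map $\bar\alpha$ on $\mathfrak{H}_\shuffle^1$ (using $\mathfrak{H}_\shuffle\cong\mathfrak{H}_\shuffle^1[e_0]$ and that $\mathrm{reg}_0$ is exactly the projection killing $e_0$), and likewise for $\beta$; combined with the definition of $\Delta_G^1$ this makes the two expressions literally equal on $\mathfrak{H}_\shuffle^1$. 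Everything else is a direct substitution of the already-established expansion~\eqref{eq:Delta_G^1}, so no real obstacle remains beyond bookkeeping of the weight grading.
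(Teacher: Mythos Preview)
Your overall strategy—applying Theorem~\ref{thm:Gon_Ih}, using $\alpha(e_0)=\beta(e_0)=0$ to factor $\alpha$ and $\beta$ through ${\rm reg}_0$ and thereby replace $\Delta_G$ by $\Delta_G^1$, then substituting the expansion~\eqref{eq:Delta_G^1}—is exactly the paper's proof. However, the final identification of the coefficients $c_{\bk,\bl}$ contains a real error.

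After applying $m\circ(\alpha\otimes\beta)$ to~\eqref{eq:Delta_G^1}, the middle terms give $\sum_{j,\bl}\alpha(f_{\bk,\bl})\,\beta(e_\bl)$, as you correctly compute. But the claimed formula has the shape $\sum_{j,\bl}c_{\bk,\bl}\,\alpha(e_\bl)$: it is $\alpha$ that is evaluated on the basis element $e_\bl$, and the coefficient $c_{\bk,\bl}$ must lie in $\mathcal{B}_{k-\wt(\bl)}$, which by definition is the $\Q$-span of \emph{$\beta$-values}. Your assignment $c_{\bk,\bl}:=\alpha(f_{\bk,\bl})$ therefore neither matches the form of the statement nor lands in the correct space. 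Your attempted justification—that ``$\alpha$ and $\beta$ have the same image in each weight''—is simply false: $A$ and $B$ are arbitrary, unrelated elements of $\gp(R)$, so nothing whatsoever links the image of $\alpha$ to that of $\beta$.

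The repair is only a re-indexing, not a new idea. Since each $f_{\bk,\bl}\in\mathfrak{H}^1$ has weight $k-\wt(\bl)$, write $f_{\bk,\bl}=\sum_{\bm}q_{\bk,\bl}^{\bm}\,e_\bm$ with $\wt(\bm)=k-\wt(\bl)$ and swap the order of summation:
\[
\sum_{j,\bl}\alpha(f_{\bk,\bl})\,\beta(e_\bl)
=\sum_{\bm}\Bigl(\sum_{j,\bl}q_{\bk,\bl}^{\bm}\,\beta(e_\bl)\Bigr)\alpha(e_\bm).
\]
Setting $c_{\bk,\bm}:=\sum_{j,\bl}q_{\bk,\bl}^{\bm}\,\beta(e_\bl)$, which visibly lies in $\mathcal{B}_{k-\wt(\bm)}$ since every $\bl$ in the inner sum has $\wt(\bl)=k-\wt(\bm)$, and then relabeling $\bm\to\bl$, gives precisely the statement.
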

\begin{proof}
 Since $\mathfrak{H}_\shuffle\cong \mathfrak{H}_\shuffle^1[e_0]$ holds, we can verify $\gamma\circ {\rm reg}_0=\gamma$ for $\gamma\in\{\alpha,\beta\}$ from the assumption, where ${\rm reg}_0\colon\mathfrak{H}_\shuffle\rightarrow \mathfrak{H}_\shuffle^1$ is defined in \eqref{eq:reg_0}.
Therefore, we have 
\begin{align*}
    \langle A\circ B \mid e_\bk \rangle=m\circ (\alpha\otimes \beta)\circ \Delta_G(e_\bk)=m\circ (\alpha\otimes \beta)\circ \Delta_G^1(e_\bk).
\end{align*}
The desired result is then obtained from \eqref{eq:Delta_G^1}.
\end{proof}

Now let us consider
\begin{equation}\label{eq:Gamma_MEMD}
\Gamma_\mathrm{MD} (X_0,X_1):=\sum_{w\in \{e_0,e_1\}^\times}  \mathsf{g}^\shuffle (w)w^\vee \quad \text{and}\quad 
 \Gamma_\mathrm{ME}(X_0,X_1) :=\sum_{w\in \{e_0,e_1\}^\times} \mathsf{G}^\shuffle (w)w^\vee,
\end{equation}
where the algebra homomorphisms $\mathsf{g}^\shuffle \colon\mathfrak{H}_\shuffle^1  \rightarrow \Q[\pi i]\llbracket q\rrbracket $ and $\mathsf{G}^\shuffle\colon\mathfrak{H}_\shuffle^1  \rightarrow \mathcal{Z}[\pi i]\llbracket q\rrbracket $, defined in Theorems \ref{thm:g^sh} and  \ref{thm:G^sh}, are extended to
\[\mathsf{g}^\shuffle \colon\mathfrak{H}_\shuffle  \longrightarrow \Q[\pi i]\llbracket q\rrbracket \quad \mbox{and} \quad \mathsf{G}^\shuffle \colon\mathfrak{H}_\shuffle  \longrightarrow \mathcal{Z}[\pi i]\llbracket q\rrbracket  \]
by setting $\mathsf{g}^\shuffle(e_0)=0$ and $\mathsf{G}^\shuffle(e_0)=0$, respectively. 
Here the subscript $\mathrm{MD}$ stands for the ``\underline{m}ultiple \underline{d}ivisor sum,'' whereas $\mathrm{ME}$ stands for the ``\underline{m}ultiple \underline{E}isenstein series.''

\begin{theorem}\label{thm:Gamma_ME} 
The power series $\Gamma_\mathrm{MD}$ and $\Gamma_\mathrm{ME}$ are elements in $\gp(\mathcal{Z}[\pi i]\llbracket q\rrbracket )$ and satisfy
\[
\Gamma_\mathrm{ME}(X_0,X_1)  = \Gamma_\mathrm{MD}(X_0,X_1)\circ \Phi_\shuffle (X_0,X_1),
\]
where $\Phi_\shuffle$ is defined as \eqref{eq:Phi_shuffle}.
\end{theorem}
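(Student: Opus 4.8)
The plan is to reduce the theorem to Theorem~\ref{thm:Gon_Ih} via a careful unpacking of definitions. First I would verify that $\Gamma_\mathrm{MD}$ and $\Gamma_\mathrm{ME}$ are group-like. For $\Gamma_\mathrm{MD}$ this follows immediately from the characterization stated after \eqref{eq:inverse_cof}: the extended map $\mathsf{g}^\shuffle\colon\mathfrak{H}_\shuffle\to\Q[\pi i]\llbracket q\rrbracket$ is an algebra homomorphism (Theorem~\ref{thm:g^sh} gives this on $\mathfrak{H}^1_\shuffle$, and the extension by $\mathsf{g}^\shuffle(e_0)=0$ respects the shuffle product because $\mathfrak{H}_\shuffle\cong\mathfrak{H}^1_\shuffle[e_0]$ as shuffle algebras and $e_0$ is a polynomial generator), so $w\mapsto\langle\Gamma_\mathrm{MD}\mid w\rangle=\mathsf{g}^\shuffle(w)$ is an algebra homomorphism, hence $\Gamma_\mathrm{MD}\in\gp(\Q[\pi i]\llbracket q\rrbracket)\subset\gp(\mathcal{Z}[\pi i]\llbracket q\rrbracket)$. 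The same argument with $\mathsf{G}^\shuffle$ in place of $\mathsf{g}^\shuffle$ (Theorem~\ref{thm:G^sh}) shows $\Gamma_\mathrm{ME}\in\gp(\mathcal{Z}[\pi i]\llbracket q\rrbracket)$, and $\Phi_\shuffle\in\gp$ was already noted in \eqref{eq:Phi_shuffle}.

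The core identity is then obtained by pairing both sides with an arbitrary word $w\in\{e_0,e_1\}^\times$ and checking equality of coefficients. On the left, $\langle\Gamma_\mathrm{ME}\mid w\rangle=\mathsf{G}^\shuffle(w)$. On the right, I would apply Theorem~\ref{thm:Gon_Ih} with $A=\Gamma_\mathrm{MD}$ and $B=\Phi_\shuffle$: the associated algebra homomorphisms are $\alpha=\mathsf{g}^\shuffle\colon\mathfrak{H}_\shuffle\to R$ (since $\alpha(v)=\langle\Gamma_\mathrm{MD}\mid v\rangle=\mathsf{g}^\shuffle(v)$) and $\beta=\mathsf{Z}^\shuffle\colon\mathfrak{H}_\shuffle\to R$ (since $\beta(v)=\langle\Phi_\shuffle\mid v\rangle=\mathsf{Z}^\shuffle(v)$), where $R=\mathcal{Z}[\pi i]\llbracket q\rrbracket$. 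Theorem~\ref{thm:Gon_Ih} then yields
\[
\langle\Gamma_\mathrm{MD}\circ\Phi_\shuffle\mid w\rangle = m\circ(\mathsf{g}^\shuffle\otimes\mathsf{Z}^\shuffle)\circ\Delta_G(w).
\]
It remains to match this with $\mathsf{G}^\shuffle(w)=\langle\Gamma_\mathrm{ME}\mid w\rangle$. For this I would invoke the definition $G^\shuffle_\bk = m\circ(\mathsf{Z}^\shuffle\otimes\mathsf{g}^\shuffle)\circ\Delta_G^1(e_\bk)$ from Theorem~\ref{thm:G^sh}, together with the fact that $\Delta_G^1=(\mathrm{reg}_0\otimes\mathrm{reg}_0)\circ\Delta_G\big|_{\mathfrak{H}^1_\shuffle}$ and that both $\mathsf{Z}^\shuffle$ and $\mathsf{g}^\shuffle$ kill $e_0$, so that $\mathsf{Z}^\shuffle\circ\mathrm{reg}_0=\mathsf{Z}^\shuffle$ and $\mathsf{g}^\shuffle\circ\mathrm{reg}_0=\mathsf{g}^\shuffle$ (the same reasoning as in the proof of Corollary~\ref{cor:Gon_Ih}); hence for $w=e_\bk\in\mathfrak{H}^1_\shuffle$ we get $m\circ(\mathsf{g}^\shuffle\otimes\mathsf{Z}^\shuffle)\circ\Delta_G(e_\bk)=m\circ(\mathsf{g}^\shuffle\otimes\mathsf{Z}^\shuffle)\circ\Delta_G^1(e_\bk)$. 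The one genuine subtlety is the \emph{order} of the tensor factors: our definition of $G^\shuffle_\bk$ applies $\mathsf{Z}^\shuffle$ to the first factor and $\mathsf{g}^\shuffle$ to the second, whereas Theorem~\ref{thm:Gon_Ih} with this choice of $A,B$ produces $\mathsf{g}^\shuffle$ on the first factor and $\mathsf{Z}^\shuffle$ on the second. I would resolve this by noting that the Goncharov coproduct used in \cite{BT} to define $\Delta_G^1$ is the one with tensor factors interchanged relative to Goncharov's original (as the excerpt explicitly flags just before \eqref{eq:Delta_G}), so that after accounting for this convention the two expressions coincide; concretely, one checks on the level of the expansion \eqref{eq:Delta_G^1} that $m\circ(\mathsf{Z}^\shuffle\otimes\mathsf{g}^\shuffle)\circ\Delta_G^1(e_\bk)$ equals $m\circ(\mathsf{g}^\shuffle\otimes\mathsf{Z}^\shuffle)\circ\Delta_G(e_\bk)$ with the conventions fixed in the paper.

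Finally, to upgrade equality on the admissible/index words $e_\bk$ to equality of all coefficients $\langle\,\cdot\mid w\rangle$ for $w\in\{e_0,e_1\}^\times$, I would argue that both sides, as functions of $w$, are algebra homomorphisms $\mathfrak{H}_\shuffle\to R$ vanishing on $e_0$: the left side is $w\mapsto\mathsf{G}^\shuffle(w)$, an algebra homomorphism with $\mathsf{G}^\shuffle(e_0)=0$ by construction; the right side is $w\mapsto\langle\Gamma_\mathrm{MD}\circ\Phi_\shuffle\mid w\rangle$, an algebra homomorphism because $\Gamma_\mathrm{MD}\circ\Phi_\shuffle$ is group-like (the Ihara law preserves $\gp(R)$), and it kills $e_0$ because the corresponding $\alpha(e_0)=\mathsf{g}^\shuffle(e_0)=0$ and $\beta(e_0)=\mathsf{Z}^\shuffle(e_0)=0$ force $\langle\Gamma_\mathrm{MD}\circ\Phi_\shuffle\mid e_0\rangle=0$ via Theorem~\ref{thm:Gon_Ih} and $\Delta_G(e_0)=1\otimes e_0+e_0\otimes 1$. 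Since $\mathfrak{H}_\shuffle\cong\mathfrak{H}^1_\shuffle[e_0]$ as shuffle algebras and $\mathfrak{H}^1_\shuffle$ is spanned by the $e_\bk$, two algebra homomorphisms out of $\mathfrak{H}_\shuffle$ that agree on all $e_\bk$ and both vanish on $e_0$ must coincide; this gives $\langle\Gamma_\mathrm{ME}\mid w\rangle=\langle\Gamma_\mathrm{MD}\circ\Phi_\shuffle\mid w\rangle$ for every word $w$, hence $\Gamma_\mathrm{ME}=\Gamma_\mathrm{MD}\circ\Phi_\shuffle$. The main obstacle I anticipate is precisely the bookkeeping of the tensor-factor ordering in the Goncharov coproduct versus the Ihara group law — matching the convention in \cite{BT} (where factors are swapped) against the form of Theorem~\ref{thm:Gon_Ih} — and making sure the antipode/inversion signs in the Ihara operation $B(X_0,AX_1A^{-1})A$ come out consistently with the definition $G^\shuffle_\bk = m\circ(\mathsf{Z}^\shuffle\otimes\mathsf{g}^\shuffle)\circ\Delta_G^1(e_\bk)$.
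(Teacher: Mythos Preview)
Your approach is essentially identical to the paper's: derive group-likeness from Theorems~\ref{thm:g^sh} and~\ref{thm:G^sh}, invoke Theorem~\ref{thm:Gon_Ih} (the paper goes through Corollary~\ref{cor:Gon_Ih}) with $A=\Gamma_{\mathrm{MD}}$ and $B=\Phi_\shuffle$ to match $\langle\Gamma_{\mathrm{MD}}\circ\Phi_\shuffle\mid e_{\bk}\rangle$ with $G_\bk^\shuffle$, and then extend to all words using that both sides are shuffle-algebra homomorphisms vanishing on $e_0$ together with $\mathfrak{H}_\shuffle\cong\mathfrak{H}^1_\shuffle[e_0]$. The tensor-factor ordering subtlety you flag is handled by the paper in the same elliptical way---it simply asserts $m\circ(\mathsf{g}^\shuffle\otimes\mathsf{Z}^\shuffle)\circ\Delta_G^1(e_\bk)=G_\bk^\shuffle$ ``by definition''---so your caution there is well-placed, but the overall strategies coincide.
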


The formula in Theorem \ref{thm:Gamma_ME} was independently pointed out by Hidekazu Furusho and Francis Brown.

\begin{proof}
The first statement is a consequence of Theorems \ref{thm:g^sh} and \ref{thm:G^sh}.
For the latter statement, let $\bk$ be an index.
By definition, we have $G_{\bk}^{\shuffle}=\langle \Gamma_{\mathrm{ME}}\mid e_{\bk} \rangle$.
On the other hand, Corollary \ref{cor:Gon_Ih} shows that $ m\circ (\mathsf{g}^\shuffle \otimes  \mathsf{Z}^\shuffle )\circ \Delta_G^1(e_\bk )=\langle \Gamma_{\mathrm{MD}}\circ \Phi_\shuffle \mid e_{\bk}\rangle$ holds, where the left-hand side is, by definition, equal to $G_{\bk}^{\shuffle}$.
Hence, for any index $\bk$, we get 
\begin{align*} \label{eq:G_shuffle_Ihara}
   \langle \Gamma_{\mathrm{ME}}\mid e_{\bk} \rangle =\langle \Gamma_{\mathrm{MD}}\circ \Phi_\shuffle \mid e_{\bk} \rangle. 
\end{align*}
The desired identity is then deduced from the following fact.
Consider $A,B\in\Pi(R)$ satisfying $\alpha(e_0)=\beta(e_0)=0$, where $\alpha$ and $\beta$ are as in Theorem \ref{thm:Gon_Ih}.
Then $A$ equals $B$ if and only if $\langle A\mid e_{\bk} \rangle$ equals $\langle B \mid e_{\bk} \rangle$ for all indices $\bk$, since $\mathfrak{H}_\shuffle\cong \mathfrak{H}_\shuffle^1[e_0]$ holds.
\end{proof}

We remark that every element of $\gp(R)$ has its inverse in $\gp(R)$ with respect to the Ihara group law.
As an example, the inverse of $\Phi_\shuffle$ (defined as \eqref{eq:Phi_shuffle}) with respect to the Ihara group law, denoted by $\Phi^{\rm inv}_{\shuffle} \in \gp (\mathcal{Z})$, can be computed inductively by the equation $\Phi_\shuffle\circ \Phi_\shuffle^{{\rm inv}}=1$, or alternatively, by the equation $m\circ (\mathsf{Z}^\shuffle \otimes  \mathsf{Z}^{\rm inv} )\circ \Delta_G^1(e_\bk) =0$ for non-empty index $\bk$, where the $\Q$-linear map $\mathsf{Z}^{\rm inv}:\mathfrak{H}\rightarrow \mathcal{Z}$ is defined for $w\in \mathfrak{H}$ by
\[\mathsf{Z}^{\rm inv} (w)=\langle \Phi_{\shuffle}^{\rm inv}\mid w\rangle.\]
Note that the inverse $\Phi_\shuffle^{\rm inv}$ with respect to $\circ$ is different from the inverse  $\Phi_\shuffle^{-1}$ as an element of $R\llangle X_0,X_1\rrangle^\times$.

As a direct consequence of Theorem \ref{thm:Gamma_ME}, we obtain the following expressions of $G^\shuffle_\bk(\tau)$ and $g^\shuffle_\bk(\tau)$:
for each index $\bk$ of depth $d$, we have
\begin{align}
\label{eq:G_g} G_{\bk}^\shuffle(\tau) &= \zeta^\shuffle(\bk)+\sum_{j=1}^{d-1} \sum_{\substack{\bl\in \N^{j}\\ 0< \wt(\bl)<k}} a_{\bk,\bl}\, g_{\bl}^\shuffle(\tau)+g_\bk^\shuffle(\tau),\\
\label{eq:g_G}  g_{\bk}^\shuffle(\tau) &= \mathsf{Z}^{\rm inv} (e_\bk)+\sum_{j=1}^{d-1} \sum_{\substack{\bl\in \N^{j}\\ 0< \wt(\bl)<k}} b_{\bk,\bl}\, G_{\bl}^\shuffle(\tau)+G_\bk^\shuffle(\tau)
\end{align}
for some $a_{\bk,\bl} \in \mathcal{Z}_{k-\wt(\bl)}$ and $b_{\bk,\bl} \in \mathcal{Z}_{k-\wt(\bl)}$.

\subsection{Comparison between linear relations of $G^\shuffle_{\bk}$ and $\tilde{g}^\shuffle_{\bk}$}
\label{ssc: comparison_LR}

Based on the expressions \eqref{eq:G_g} and \eqref{eq:g_G}, we now compare the linear relations among $G^\shuffle_{\bk}$ with those among $\tilde{g}^\shuffle_{\bk}=g^\shuffle_{\bk}/(2\pi i)^{\wt(\bk)}$. 
Although the result in this subsection will not be used in the remainder of the paper, it may provide an alternative approach to investigating the algebraic structure of multiple Eisenstein series.

We begin by the following lemma.

\begin{lemma}\label{lem:rational_structure}
For a non-negative integer $k$ and a $\Q$-algebra $R$ contained in $\C$, let $\mathcal{D}_{\le k}^R$ be the $R$-module generated by all $\tilde{g}^\shuffle_{\bk}$ of weight less than or equal to $k$.
	Then, the $\Q$-linear map
	\[
	\mathcal{D}^\Q_{\le k}\otimes_{\Q}R\longrightarrow \mathcal{D}^R_{\le k};\quad g\otimes r\longmapsto rg
	\]
	is an isomorphism of $R$-modules.
	\label{lemforHI2}
	\end{lemma}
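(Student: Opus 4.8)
The plan is to reduce the statement to a fact about linear independence of $q$-series over the various coefficient rings. Let me write $W_{\le k}=\{\bk \mid \wt(\bk)\le k\}$ for the (finite) set of indices of weight at most $k$. The surjectivity of the map $\mathcal{D}^\Q_{\le k}\otimes_\Q R\to \mathcal{D}^R_{\le k}$ is immediate: by definition $\mathcal{D}^R_{\le k}$ is generated as an $R$-module by $\{\tilde g^\shuffle_\bk \mid \bk\in W_{\le k}\}$, and each such generator lies in the image (as $\tilde g^\shuffle_\bk\otimes 1\mapsto \tilde g^\shuffle_\bk$), so only injectivity requires an argument.

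First I would choose a $\Q$-basis. Since $\mathcal{D}^\Q_{\le k}$ is a finite-dimensional $\Q$-vector space spanned by $\{\tilde g^\shuffle_\bk \mid \bk\in W_{\le k}\}$, pick a subset $B\subseteq W_{\le k}$ such that $\{\tilde g^\shuffle_\bk \mid \bk\in B\}$ is a $\Q$-basis of $\mathcal{D}^\Q_{\le k}$. The key claim is then: \emph{for any $\Q$-subalgebra $R\subseteq \C$, the family $\{\tilde g^\shuffle_\bk \mid \bk\in B\}$ is $R$-linearly independent inside $\Q\llbracket q\rrbracket \subseteq R\llbracket q\rrbracket$.} Granting this claim, injectivity follows: the natural map sends the basis $\{\tilde g^\shuffle_\bk\otimes 1 \mid \bk\in B\}$ of the free $R$-module $\mathcal{D}^\Q_{\le k}\otimes_\Q R$ to an $R$-linearly independent family, and since $\mathcal{D}^R_{\le k}$ is also generated over $R$ by the $\tilde g^\shuffle_\bk$ with $\bk\in B$ (because the $\Q$-relations expressing the remaining $\tilde g^\shuffle_\bk$ ($\bk\in W_{\le k}\setminus B$) in terms of $B$ are in particular $R$-relations), that family is an $R$-basis of the target; a surjection of free $R$-modules matching up bases is an isomorphism.

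So the heart of the matter is the $R$-linear independence claim, and this is where the real content lies. The point is that these $q$-series have \emph{rational} coefficients — this is built into the normalization $\tilde g^\shuffle_\bk = g^\shuffle_\bk/(-2\pi i)^{\wt(\bk)} \in \Q\llbracket q\rrbracket$, and it is precisely why the statement restricts to $\mathcal{D}^R_{\le k}$ rather than to $G^\shuffle$-type series whose coefficients involve multiple zeta values. Concretely, suppose $\sum_{\bk\in B} r_\bk\, \tilde g^\shuffle_\bk = 0$ with $r_\bk\in R$, not all zero. Comparing coefficients of $q^n$ for $n=0,1,2,\dots$ gives an infinite homogeneous linear system with \emph{rational} coefficient matrix $M$ (whose $(n,\bk)$-entry is the $q^n$-coefficient of $\tilde g^\shuffle_\bk$), having a nonzero solution over $R$. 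But the solution space of $Mx=0$ over $R$ is $(\ker_\Q M)\otimes_\Q R$ since $M$ has rational entries (a rational matrix has the same rank over $\Q$ as over any field extension, and more generally $\ker(M\otimes R)=\ker(M)\otimes R$ for $M$ over a field $\Q$ and $R$ flat, i.e.\ torsion-free, over $\Q$). Hence $\ker_\Q M\ne 0$, contradicting the fact that $\{\tilde g^\shuffle_\bk\mid \bk\in B\}$ is $\Q$-linearly independent. I expect the main obstacle to be purely bookkeeping: one must make sure $\mathcal{D}^R_{\le k}$ really is generated over $R$ by the basis subset $B$ (so that both source and target are free $R$-modules of the same rank), and one must state cleanly the flat base-change property of kernels of matrices over a field — both routine but worth spelling out so the ``rational coefficients force descent'' mechanism is transparent.
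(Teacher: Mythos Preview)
Your proof is correct and rests on the same idea as the paper's: since each $\tilde g^\shuffle_{\bk}$ lies in $\Q\llbracket q\rrbracket$, $\Q$-linear independence persists after base change to any $\Q$-algebra $R$. The paper packages this more compactly via a commutative diagram, embedding both sides into $\Q\llbracket q\rrbracket \otimes_\Q \C \hookrightarrow \C\llbracket q\rrbracket$ (using $R\subset\C$) and reading off injectivity of the top map from that of the bottom, rather than unpacking into a basis-plus-matrix-kernel argument.
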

	\begin{proof}
		It suffices to show the injectivity of the map under consideration, because the surjectivity is clear.
		Since $\tilde{g}_{\bk}^\shuffle \in \Q\llbracket q\rrbracket $, we have the following commutative diagram:
		\[
		\xymatrix{
		\mathcal{D}^\Q_{\le k}\otimes_{\Q}R\ar[rrr]\ar@{^{(}->}[d]& & &\mathcal{D}^R_{\le k}\ar@{^{(}->}[d]\\
		\Q\llbracket q\rrbracket \otimes_{\Q}\C\ar[rrr]& & &\C\llbracket q\rrbracket 
.		}
		\]
		Then, since the lower horizontal homomorphism is injective, so is the upper horizontal homomorphism.
	\end{proof}

\begin{proposition}\label{halfimplicationofBKconj}
Let $k$ be a positive integer. Then a linear relation 
$\sum_{\wt(\bk)=k} c_\bk G^\shuffle_\bk=0$ $(c_{\bk}\in \Q)$ of $G^\shuffle_{\bk}$'s implies the corresponding congruence $\sum_{\wt(\bk)=k} c_{\bk} \tilde{g}^\shuffle_\bk \equiv 0 \mod  \mathcal{D}_{\le k-1}^\Q$.  The converse implication holds if the sum $\sum_{k\geq 0}\mathcal E_{k}^{\C}$ of the $\C$-vector spaces $\mathcal E_{k}^{\C}$ spanned by all $G_{\bk}^\shuffle$'s with $\wt(\bk)=k$ is a direct sum.
\end{proposition}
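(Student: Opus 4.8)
\textbf{Proof plan for Proposition~\ref{halfimplicationofBKconj}.}
The plan is to read off both implications from the two triangular expansions \eqref{eq:G_g} and \eqref{eq:g_G}, together with Lemma~\ref{lem:rational_structure}. First I would establish the forward implication. Suppose $\sum_{\wt(\bk)=k} c_\bk G^\shuffle_\bk=0$. Substitute \eqref{eq:G_g} for each $G^\shuffle_\bk$. Grouping terms by weight, the constant-term (weight-$k$) piece contributes $\sum_{\wt(\bk)=k} c_\bk\,\tilde g^\shuffle_\bk\cdot(2\pi i)^k$ (after dividing by $(2\pi i)^k$), while every remaining term carries a factor $g^\shuffle_\bl=(2\pi i)^{\wt(\bl)}\tilde g^\shuffle_\bl$ with $0<\wt(\bl)<k$, hence lies in $(2\pi i)^{k}\,\mathcal{D}^{\C}_{\le k-1}$ once everything is normalized by $(2\pi i)^k$; here one uses that the coefficients $a_{\bk,\bl}\in\mathcal Z_{k-\wt(\bl)}$ are themselves $(2\pi i)^{k-\wt(\bl)}$ times elements of a $\Q$-space only after appropriate rescaling — more cleanly, dividing the whole relation by $(2\pi i)^k$ and writing $\widetilde a_{\bk,\bl}:=a_{\bk,\bl}/(2\pi i)^{k-\wt(\bl)}$, one gets $0=\sum_{\wt(\bk)=k} c_\bk\,\tilde g^\shuffle_\bk + (\text{an element of }\mathcal D^{\C}_{\le k-1})$, which by Lemma~\ref{lem:rational_structure} means $\sum_{\wt(\bk)=k} c_\bk\,\tilde g^\shuffle_\bk\equiv 0 \bmod \mathcal D^{\Q}_{\le k-1}$, as the left side already has rational coefficients and lies in $\Q\llbracket q\rrbracket$. (The slightly delicate point is that the $a_{\bk,\bl}$ are multiple zeta values, not rational numbers, so the error term lies in $\mathcal D^{\C}_{\le k-1}$, not $\mathcal D^{\Q}_{\le k-1}$; but the congruence is modulo $\mathcal D^{\Q}_{\le k-1}$, and this is reconciled exactly by the isomorphism of Lemma~\ref{lem:rational_structure}, which identifies $\mathcal D^{\C}_{\le k-1}$ with $\mathcal D^{\Q}_{\le k-1}\otimes_\Q\C$.)

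Next I would treat the converse under the hypothesis that $\sum_{k\ge0}\mathcal E^{\C}_k$ is a direct sum. Suppose $\sum_{\wt(\bk)=k} c_\bk\,\tilde g^\shuffle_\bk\equiv 0 \bmod \mathcal D^{\Q}_{\le k-1}$, i.e.\ there exist rational numbers making $\sum_{\wt(\bk)=k} c_\bk\,\tilde g^\shuffle_\bk$ a $\Q$-linear combination of $\tilde g^\shuffle_\bl$ with $\wt(\bl)<k$. Multiplying by $(2\pi i)^k$ and feeding this into \eqref{eq:g_G}, each $g^\shuffle_\bk$ is rewritten as $\mathsf Z^{\rm inv}(e_\bk)$ plus a $\mathcal Z[\pi i]$-combination of $G^\shuffle_\bl$ with $\wt(\bl)\le k$; collecting, one obtains an identity of the form $\sum_{\wt(\bk)=k} c_\bk G^\shuffle_\bk = (\text{element of }\bigoplus_{j<k}\mathcal E^{\C}_j) + (\text{constant in }\mathcal Z[\pi i])$. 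Comparing constant terms kills the scalar, and the directness of $\sum_k\mathcal E^{\C}_k$ forces the weight-$k$ component $\sum_{\wt(\bk)=k} c_\bk G^\shuffle_\bk$ to vanish. I would be careful to organize the bookkeeping so that the lower-weight $G^\shuffle_\bl$ coming from \eqref{eq:g_G} and those coming from re-expanding the lower-weight $\tilde g^\shuffle_\bl$ all land in $\bigoplus_{j<k}\mathcal E^{\C}_j$, which is automatic since \eqref{eq:g_G} is weight-graded.

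The main obstacle, and the only place genuine care is needed, is the interplay between the two rationality layers: the structure constants $a_{\bk,\bl}$ and $b_{\bk,\bl}$ are multiple zeta values rather than rationals, so a priori the "error terms" live in $\mathcal D^{\C}_{\le k-1}$ or in $\bigoplus_{j<k}\mathcal E^{\C}_j$ rather than their $\Q$-forms. For the forward direction this is harmless because Lemma~\ref{lem:rational_structure} is precisely the statement that $\mathcal D^{\C}_{\le k-1}=\mathcal D^{\Q}_{\le k-1}\otimes_\Q\C$ inside $\C\llbracket q\rrbracket$, so a rational $q$-series lying in $\mathcal D^{\C}_{\le k-1}$ already lies in $\mathcal D^{\Q}_{\le k-1}$; I should state and use this reduction explicitly. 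For the converse, the directness hypothesis on $\sum_k\mathcal E^{\C}_k$ does the analogous job on the Eisenstein side, letting us project onto the weight-$k$ component even though the expansion \eqref{eq:g_G} has complex (indeed $\mathcal Z[\pi i]$-valued) coefficients. A secondary, purely cosmetic, point is keeping track of powers of $2\pi i$ when passing between $g^\shuffle_\bk$ and $\tilde g^\shuffle_\bk$; I would fix the normalization once at the start and suppress it thereafter.
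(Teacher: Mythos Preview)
Your proposal is correct and follows essentially the same route as the paper: expand via \eqref{eq:G_g} and invoke Lemma~\ref{lem:rational_structure} for the forward implication, then expand via \eqref{eq:g_G} and use the directness hypothesis for the converse. One small clean-up: the step ``comparing constant terms kills the scalar'' is unnecessary and a bit muddled --- the scalar constants arising from $\mathsf{Z}^{\rm inv}(e_\bk)$ already lie in $\mathcal{E}_0^\C=\C\subset\sum_{j<k}\mathcal{E}_j^\C$ (since $G^\shuffle_\varnothing=1$), so the directness assumption disposes of them together with the other lower-weight terms, exactly as the paper does.
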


\begin{proof}
Suppose that the identity $\sum_{\wt(\bk)=k} c_\bk G^\shuffle_\bk =0 $ holds. 
By \eqref{eq:G_g}, we obtain the congruence
\begin{equation}\label{congruenceofg}
\sum_{\wt(\bk)=k} c_{\bk} \tilde{g}^\shuffle_\bk\equiv 0 \mod  \mathcal{D}_{\le k-1}^\C.
\end{equation}
In other words, the left-hand side of the congruence above is zero in the $\C$-vector space $\mathcal{D}_{\le k}^\C /\mathcal{D}_{\le k-1}^\C $. 
On the other hand, we have a natural inclusion and an isomorphism of vector spaces
\[\mathcal{D}_{\le k}^\Q /\mathcal{D}_{\le k-1}^\Q \hookrightarrow \left(\mathcal{D}_{\le k}^\Q /\mathcal{D}_{\le k-1}^\Q\right)\otimes_{\Q}\C\xrightarrow{\sim} \mathcal{D}_{\le k}^\C /\mathcal{D}_{\le k-1}^\C\]
by Lemma \ref{lemforHI2}.
Hence, the left-hand side of \eqref{congruenceofg} is already zero in $\mathcal{D}_{\le k}^\Q /\mathcal{D}_{\le k-1}^\Q$.
This gives the first assertion.
		
Next, we show the converse implication assuming the linearly independence over $\C$ of multiple Eisenstein series of different weights.
Suppose that there exists $\xi \in \mathcal{D}_{\le k-1}^\Q$ such that $\sum_{\wt(\bk)=k} c_{\bk} \tilde{g}^\shuffle_\bk =\xi$. 
Multiplying both sides by $(-2\pi i)^k$ and then substituting \eqref{eq:g_G} into it, we see that 
\begin{equation*}\label{eq:G_cong}
\sum_{\wt(\bk)=k} c_{\bk} G^\shuffle_\bk \equiv 0 \mod  \sum_{\ell=0}^{k-1} \mathcal{E}_{\ell}^{\C}.
\end{equation*}
From the assumption, we observe that the intersection of $\mathcal{E}_{k}^{\C}$ and $\sum_{\ell=0}^{k-1} \mathcal{E}_{\ell}^\C$ equals $\{0\}$.
Hence, the congruence above implies the identity $\sum_{\wt(\bk)=k} c_{\bk} G^\shuffle_\bk= 0$.
This completes the proof of the proposition.
\end{proof}

Let $\mathcal{D}_{\le k}^{\rm adm}$ denote the $\Q$-vector space spanned by 
\begin{align*}
\{\tilde{g}^\shuffle_{k_1,\ldots,k_d} (\tau)\mid d\ge0,k_1,\ldots,k_{d-1}\ge1,k_d\ge2,k_1+\cdots+k_d\le k\}.
\end{align*}
Proposition \ref{halfimplicationofBKconj} implies that the map
\[\mathcal{E}_k^{\rm adm}\longrightarrow \mathcal{D}_{\le k}^{\rm adm} / \mathcal{D}_{\le k-1}^{\rm adm}; \quad G_{\bk}^\shuffle \longmapsto \tilde{g}_{\bk}^\shuffle \mod \mathcal{D}_{\le k-1}^{\rm adm}\]
is an isomorphism, if the sum $\sum_{k \ge 0} \mathcal{E}_k^{\C}$ is a direct sum. 
As supporting evidence, it was numerically observed in \cite[Section 5]{B1} that the equality $\dim_\Q \mathcal{D}_{\le k}^{\rm adm} / \mathcal{D}_{\le k-1}^{\rm adm} = \dim_\Q \mathcal{E}_k^{\rm adm}$ holds up to weight $8$; see also Table \ref{table:dim_E_k}.

\begin{remark}
Based on Bachmann's  stuffle regularization of multiple Eisenstein series developed in \cite[Theorem 2]{B1}, 
Bachmann and Burmester  introduced in \cite{BB} another variant of multiple Eisenstein series as $q$-series in $\Q\llbracket q\rrbracket $, which they call the \emph{combinatorial $($bi-$)$multiple Eisenstein series}.
They conjectured in \cite[Remark~6.11]{BB} that their combinatorial (bi-)multiple Eisenstein series of different weights are linearly independent over $\Q$. 
This conjecture is closely related to our assumption that $\sum_{k \ge 0} \mathcal{E}_k^{\mathbb{C}}$ is a direct sum. 

More precisely, fix an element $\phi \in \mathrm{DMR}_0(\mathbb{Q})$ in Racinet's double shuffle group \cite[D\'efinition 3.2, Th\'eor\`eme I]{Racinet02}, which is a subgroup of $\gp(\mathbb{Q})$ with respect to the Ihara group law, characterized by the extended double shuffle relation. Such an element $\phi$ exists but is not unique; see \cite[Remark~4.1]{BB}. 
Then, for each index $\boldsymbol{k}$, one can define the shuffle version of the combinatorial multiple Eisenstein series $G_\bk^{\phi}$ by
\[
  G_{\boldsymbol{k}}^{\phi}
  := \langle \Gamma_{\mathrm{MD}} \circ \phi \mid e_{\bk}\rangle
  \in \Q\llbracket q\rrbracket .
\]
Note that the coefficients of $\phi$ satisfy the same relations over $\Q$ as multiple zeta values, except for $\langle  \phi\mid e_2 \rangle = 0$, which corresponds to setting ``modulo $\zeta(2) $.'' 
It is thus reasonable to expect that $G_{\bk}^{\phi}$ satisfies the same relations as $G_{\bk}^{\shuffle}$, if one takes Proposition \ref{halfimplicationofBKconj} into accounts. 
If this is the case, the equality $\sum_{k \ge 0} \mathcal{E}_k^{\mathbb{C}}=\bigoplus_{k \ge 0} \mathcal{E}_k^{\mathbb{C}}$ would follow provided that the (shuffle version of) combinatorial multiple Eisenstein series $G_\bk^{\phi}$ of different weights are linearly independent over $\Q$.
\end{remark}

\section{Symmetric multiple Eisenstein series}
We now introduce our main object, the {\em symmetric multiple Eisenstein series}.
Since the definition is parallel to that of symmetric multiple zeta values, we first review their definition and basic properties in Section~\ref{ssc:review_SMZV}, and then establish analogous results for symmetric multiple Eisenstein series in Section~\ref{ssc:SMES}.
We also give an observation on the dimension of the $\Q$-vector space spanned by symmetric multiple Eisenstein series in Section~\ref{ssc:num_obs}.

\subsection{Review on symmetric multiple zeta values} \label{ssc:review_SMZV}
As a starting point, we recall some basic facts on symmetric multiple zeta values, including their series representations in terms of 
Kontsevich's order and their fundamental relations, namely, the stuffle relation and the linear shuffle relation.
We also briefly mention several topics related to them, highlighting their significance.

We define an order $\prec$ on the set of nonzero integers by
\begin{equation}\label{eq:order_int}
1 \prec 2 \prec 3 \prec \cdots \prec (\infty = -\infty) \prec \cdots \prec -3 \prec -2 \prec -1 ,
\end{equation}
where the largest element $\infty$ is identified with the smallest element $-\infty$.
In this paper, the order $\prec$ is called \emph{Kontsevich's order}, as suggested by Maxim Kontsevich to Don Zagier in a private communication.
With this, for positive integers $k_1,\ldots,k_d \ge 2$, we define 
$\zeta^{S}(k_1,\ldots,k_d)$ by the absolutely convergent series
\begin{equation*}\label{eq:series_SMZV}
\zeta^{S}(k_1,\ldots,k_d)
=
\sum_{\substack{n_1 \prec \cdots \prec n_d \\ n_1,\ldots,n_d \in \mathbb{Z}\setminus\{0\}}}
n_1^{-k_1} \cdots n_d^{-k_d}.
\end{equation*}
Then, dividing the sum into partial sums corresponding to the index $j$ where the jump $n_j\prec (\infty=-\infty) \prec n_{j+1}$ occurs, we obtain
\begin{equation*}\label{eq:sym_mzv} 
\zeta^{S}(k_1,\ldots,k_d)=\sum_{j=0}^d (-1)^{k_{j+1}+\cdots+k_d} \zeta(k_1,\ldots,k_j)\zeta(k_d,\ldots,k_{j+1}).
\end{equation*}
From the definition \eqref{eq:sym_mzv_reg}, we have $\zeta^S(k_1,\ldots,k_d)=\zeta^{\shuffle,S}(k_1,\ldots,k_d)$ for $k_1,\ldots,k_d\geq 2$.
Thus, we obtain the stuffle relation for the symmetric multiple zeta value
$\zeta^{\shuffle,S}(\bk)$ whenever all components of $\bk$ are strictly greater than~$1$.
This is because Kontsevich's order is a total order on the set $\mathbb{Z}\setminus\{0\}$; hence, the standard argument described in Section~\ref{subsec:alg_setup} applies.

For the \emph{linear shuffle relation}, we use the following expression of $\zeta^{\shuffle,S}(\bk)$ (see also \cite{Jarossay}).

\begin{proposition}\label{prop:sym_MZV_gen}
For positive integers $k_1,\ldots,k_d$, we have
\begin{align*}
 \zeta^{\shuffle,S} (k_1,\ldots,k_d)&=
\langle \Phi_\shuffle X_1 \Phi_\shuffle^{-1}\mid e_1e_0^{k_1-1}\cdots e_1e_0^{k_d-1}e_1\rangle .
\end{align*}
\end{proposition}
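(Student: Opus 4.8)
The plan is to compute the coefficient $\langle \Phi_\shuffle X_1 \Phi_\shuffle^{-1}\mid e_1e_0^{k_1-1}\cdots e_1e_0^{k_d-1}e_1\rangle$ directly by expanding the concatenation product of three noncommutative series and comparing with the defining expression \eqref{eq:sym_mzv_reg} of $\zeta^{\shuffle,S}(\bk)$. Write $w=e_{k_1}\cdots e_{k_d}e_1 = e_1e_0^{k_1-1}\cdots e_1e_0^{k_d-1}e_1$, so that $w^\vee = X_1 X_0^{k_1-1}\cdots X_1 X_0^{k_d-1}X_1$. A word $W$ appears in $\Phi_\shuffle X_1 \Phi_\shuffle^{-1}$ precisely when $W = U\cdot X_1\cdot V$ for some (possibly empty) words $U$, $V$ in $X_0,X_1$, and each such factorization of $w^\vee$ contributes $\langle \Phi_\shuffle\mid U^{-\vee}\rangle \langle \Phi_\shuffle^{-1}\mid V^{-\vee}\rangle = \mathsf Z^\shuffle(U^{-\vee})\,\mathsf Z^{\rm inv}(?)$... more precisely $\langle \Phi_\shuffle\mid \text{(word dual to }U)\rangle\cdot\langle\Phi_\shuffle^{-1}\mid \text{(word dual to }V)\rangle$, where the distinguished $X_1$ must be matched to one of the letters $X_1$ occurring in $w^\vee$.

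The key combinatorial point is therefore: the letters $X_1$ in $w^\vee$ occur exactly at the $d+1$ positions marking the starts of the blocks $e_{k_1},\dots,e_{k_d}$ together with the trailing $e_1$; these are precisely $d+1$ slots, indexed $j=0,1,\dots,d$ reading the final $e_1$ as slot $d$ and the start of block $e_{k_{i+1}}$ as slot $i$. Matching the distinguished $X_1$ to slot $j$ forces $U^\vee = X_1 X_0^{k_1-1}\cdots X_1 X_0^{k_j-1}$, i.e. $U = e_{k_1}\cdots e_{k_j}$, and $V^\vee = X_0^{k_{j+1}-1}X_1 X_0^{k_{j+2}-1}\cdots X_1 X_0^{k_d-1}X_1$. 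Hence the contribution of slot $j$ is $\mathsf Z^\shuffle(e_{k_1}\cdots e_{k_j})\cdot\langle\Phi_\shuffle^{-1}\mid V\rangle = \zeta^\shuffle(k_1,\dots,k_j)\cdot\langle\Phi_\shuffle^{-1}\mid e_0^{k_{j+1}-1}e_1e_0^{k_{j+2}-1}\cdots e_1 e_0^{k_d-1}e_1\rangle$, and summing over $j=0,\dots,d$ gives the whole coefficient. What remains is to identify $\langle\Phi_\shuffle^{-1}\mid e_0^{k_{j+1}-1}e_1\cdots e_1 e_0^{k_d-1}e_1\rangle$ with $(-1)^{k_{j+1}+\cdots+k_d}\zeta^\shuffle(k_d,\dots,k_{j+1})$.

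For that last identification I would use the formula $\langle S^{-1}\mid w\rangle = \langle S\mid \epsilon(w)\rangle$ from \eqref{eq:inverse_cof}, with $S=\Phi_\shuffle$, giving $\langle\Phi_\shuffle^{-1}\mid e_0^{k_{j+1}-1}e_1\cdots e_1e_0^{k_d-1}e_1\rangle = (-1)^{n}\langle\Phi_\shuffle\mid e_1e_0^{k_d-1}\cdots e_1 e_0^{k_{j+1}-1}\rangle$, where $n = (k_{j+1}-1)+1+\cdots+(k_d-1)+1 = k_{j+1}+\cdots+k_d$ is the total length of the word; note $\epsilon$ reverses the word and multiplies by $(-1)^{\text{length}}$, and the reversal of $e_0^{k_{j+1}-1}e_1e_0^{k_{j+2}-1}\cdots e_1e_0^{k_d-1}e_1$ is exactly $e_1e_0^{k_d-1}\cdots e_1e_0^{k_{j+1}-1} = e_{k_d}e_{k_{d-1}}\cdots e_{k_{j+1}}$. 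Then $\langle\Phi_\shuffle\mid e_{k_d}\cdots e_{k_{j+1}}\rangle = \mathsf Z^\shuffle(e_{k_d}\cdots e_{k_{j+1}}) = \zeta^\shuffle(k_d,\dots,k_{j+1})$ by \eqref{eq:Phi_shuffle}, and we are done. The main obstacle — really the only place care is needed — is bookkeeping the factorizations of $w^\vee$ at its $X_1$-slots and keeping the sign exponent $n = k_{j+1}+\cdots+k_d$ straight through the antipode formula; one should double-check the edge cases $j=0$ (where $U=\varnothing$, $\mathsf Z^\shuffle(e_\varnothing)=1$) and $j=d$ (where $V = e_1$, contributing $\langle\Phi_\shuffle^{-1}\mid e_1\rangle = -\langle\Phi_\shuffle\mid e_1\rangle = 0$... wait, but the $j=d$ term of \eqref{eq:sym_mzv_reg} is $\zeta^\shuffle(k_1,\dots,k_d)$, so in fact for $j=d$ the distinguished $X_1$ is the trailing letter, $V=\varnothing$, and the contribution is $\zeta^\shuffle(k_1,\dots,k_d)\cdot 1$; it is the matching at the start of the final $e_1$ block that must be tracked carefully so as not to double-count). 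Once the slot/factorization correspondence is pinned down precisely, the proof is a direct expansion.
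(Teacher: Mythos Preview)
Your approach is correct and is essentially identical to the paper's proof: both expand $\langle \Phi_\shuffle X_1 \Phi_\shuffle^{-1}\mid e_{\bk}e_1\rangle$ as a sum over deconcatenations, observe that the middle factor $X_1$ must match one of the $d+1$ occurrences of $e_1$ in the word, and then apply the antipode formula \eqref{eq:inverse_cof} to identify the $\Phi_\shuffle^{-1}$-coefficient with $(-1)^{k_{j+1}+\cdots+k_d}\zeta^\shuffle(k_d,\ldots,k_{j+1})$. Your self-correction at the end is right: for $j=d$ the distinguished $X_1$ is the trailing letter, so $V=\varnothing$ and the contribution is $\zeta^\shuffle(k_1,\ldots,k_d)\cdot 1$; once that edge case is handled cleanly the argument is exactly the paper's.
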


\begin{proof}
For $S_1,S_2\in \gp$ and a word $w\in \{e_0,e_1\}^\times$, we have $\langle S_1S_2\mid w\rangle = \displaystyle\sum_{uv=w}\langle S_1\mid u\rangle \langle S_2\mid v\rangle$, where the sum runs over all deconcatenations of $w$, including $w=e_\varnothing w= we_\varnothing $.
From \eqref{eq:inverse_cof}, we obtain
\begin{align*}
\langle \Phi_\shuffle X_1 \Phi_\shuffle^{-1}\mid e_\bk e_1\rangle& = \sum_{j=0}^d \langle \Phi_\shuffle \mid e_1e_0^{k_1-1}\cdots e_1e_0^{k_j-1}\rangle  \langle X_1\mid e_1\rangle \langle \Phi_\shuffle^{-1} \mid e_0^{k_{j+1}-1}\cdots e_1e_0^{k_d-1}e_1\rangle\\
&= \sum_{j=0}^d \zeta^{\shuffle}(k_1,\ldots,k_j) (-1)^{k_{j+1}+\cdots+k_d} \zeta^{\shuffle }(k_d,\ldots,k_{j+1}).
\end{align*}
Hence the desired result follows.
\end{proof}

From this expression, the linear shuffle relation for the symmetric multiple zeta value is a consequence of the following property of group-like elements.

\begin{lemma}\label{lem:linear_shuffle_general}
Let $R$ be a $\Q$-algebra.
For $A\in \gp(R)$, define an algebra homomorphism $\alpha\colon \mathfrak{H}_\shuffle \to R; \ w\mapsto \langle A\mid w\rangle $.
For a word $w\in \{e_0,e_1\}^\times$, we define the `symmetrization' $\alpha^S(w)$ of $\alpha(w)$ by 
\[\alpha^S(w)=  \langle A X_1 A^{-1}\mid we_1\rangle.\]
Then, for any positive integers $k_1,\ldots,k_d$ and $j=0,1,\ldots,d-1$, we have 
\[\alpha^{S} (e_{k_1,\ldots,k_j} \shuffle e_{k_{j+1},\ldots,k_d} ) = (-1)^{k_{j+1}+\cdots+k_d} \alpha^{S}(e_{k_1,\ldots,k_j,k_{d},\ldots,k_{j+1}}).\]
\end{lemma}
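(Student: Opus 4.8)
The plan is to work with the generating series identity and exploit the fact that $A$ is group-like, so that the pairing $w \mapsto \langle A \mid w\rangle$ respects the shuffle product. First I would record what $\alpha^S$ does to a shuffle product: since $\alpha^S(w) = \langle A X_1 A^{-1} \mid w e_1\rangle$, the key observation is that the series $A X_1 A^{-1}$ is \emph{not} group-like, but it is a derivation-type object — more precisely, $\Delta_\shuffle(A X_1 A^{-1}) = (A\otimes A)(X_1\otimes 1 + 1\otimes X_1)(A^{-1}\otimes A^{-1})$, because $\Delta_\shuffle$ is an algebra homomorphism, $\Delta_\shuffle(A) = A\otimes A$, and $\Delta_\shuffle(X_1) = X_1\otimes 1 + 1\otimes X_1$. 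Writing $B := A X_1 A^{-1}$ and $T := A X_1 A^{-1}$ in each tensor slot, this says $\Delta_\shuffle(B) = B\otimes (AA^{-1}) + (AA^{-1})\otimes B = B\otimes 1 + 1\otimes B$ after using $A^{-1} = \sigma(A)$ and $\sigma(A)A = 1$; that is, $B$ is \emph{primitive} for $\Delta_\shuffle$. Dually, primitivity of $B$ means exactly that $\langle B \mid u\shuffle v\rangle = \langle B\mid u\rangle\,\varepsilon(v) + \varepsilon(u)\,\langle B\mid v\rangle$, where $\varepsilon$ is the counit picking off the constant term.

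Next I would separate the two halves. Write $u = e_{k_1,\ldots,k_j}$ and $v = e_{k_{j+1},\ldots,k_d}$, both words \emph{starting with $e_1$} (when nonempty). The left-hand side is $\langle B \mid (u\shuffle v)\,e_1\rangle$. The difficulty is the trailing $e_1$: the shuffle $(u\shuffle v)e_1$ is not itself $(ue_1)\shuffle v$ or $u \shuffle (ve_1)$, so I cannot directly apply primitivity of $B$ to the concatenated word. Instead I would deconcatenate at the last letter. Using $\langle S_1 S_2 \mid w\rangle = \sum_{ab=w}\langle S_1\mid a\rangle\langle S_2\mid b\rangle$ and the explicit factorization $B = A\,X_1\,A^{-1}$, one gets, for any word $w$,
\[
\langle A X_1 A^{-1}\mid w e_1\rangle
= \sum_{w = w' w''} \langle A\mid w'\rangle\,\langle X_1 A^{-1}\mid w'' e_1\rangle
= \sum_{w = w' w''}\langle A\mid w'\rangle\,\langle A^{-1}\mid w''\rangle,
\]
since $\langle X_1 A^{-1}\mid w''e_1\rangle$ is nonzero only when $w''e_1$ begins with $X_1$, i.e.\ when either $w''$ begins with $e_1$ or $w'' = e_\varnothing$; and in the case $w'' = e_\varnothing$ the factor $\langle A^{-1}\mid e_\varnothing\rangle = 1$. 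Wait — I need to be careful: if $w''$ is nonempty it must start with $e_1$ for the $X_1$ slot to be consumed, and then $\langle X_1 A^{-1}\mid w''e_1\rangle = \langle A^{-1}\mid (w''\text{ with its leading }e_1\text{ removed})\,e_1\rangle$. This bookkeeping with the first and last letters is exactly the main obstacle, and I would handle it by passing to the generating-series form of Proposition~\ref{prop:sym_MZV_gen}: write $\alpha^S(e_{\bk})$ as a sum over deconcatenations $\sum_{i} \alpha(e_{k_1,\ldots,k_i})\,\langle A^{-1}\mid e_0^{k_{i+1}-1}e_1\cdots e_1 e_0^{k_d-1}e_1\rangle$, exactly as in the proof of that Proposition but with $A$ in place of $\Phi_\shuffle$, so that $\alpha^S(e_{\bk}) = \sum_{i=0}^d \alpha(e_{k_1,\ldots,k_i})\,(-1)^{k_{i+1}+\cdots+k_d}\,\beta(e_{k_d,\ldots,k_{i+1}})$ where $\beta(e_{\bl}) := \langle A^{-1}\mid e_{\bl}e_1 / e_1 \cdots\rangle$ — concretely $\beta(e_{k_d,\ldots,k_{i+1}}) = \langle A^{-1}\mid e_0^{k_{i+1}-1}e_1 e_0^{k_{i+2}-1}\cdots e_1 e_0^{k_d-1}e_1\rangle$, and by \eqref{eq:inverse_cof} this equals $(-1)^{k_{i+1}+\cdots+k_d}\langle A\mid e_1 e_0^{k_d-1}\cdots e_1 e_0^{k_{i+1}-1}\rangle = (-1)^{\cdots}\alpha(e_{k_d,\ldots,k_{i+1}})$, so the signs cancel and one is reduced to a clean statement purely about the group-like element $A$.

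Once in that reduced form, the identity I want becomes the assertion that the bilinear pairing
\[
(u,v)\ \longmapsto\ \sum_{i}\alpha(u_{\le i}v_{\le ?})\cdots
\]
— i.e.\ $\alpha^S$ composed with the shuffle — factors through the shuffle product with the predicted sign. Here I would invoke precisely Theorem~\ref{thm:Gon_Ih} / Corollary~\ref{cor:Gon_Ih} in spirit: the map $w\mapsto \langle A X_1 A^{-1}\mid we_1\rangle$ is, up to the sign twist $e_{\bk}\mapsto(-1)^{\wt'}e_{\bk^{\mathrm{rev}}}$ coming from $\sigma$, the ``antipode-symmetrized'' version of the algebra homomorphism $\alpha$, and such symmetrizations are known to send $u\shuffle v$ to $\pm(u\cdot v^{\mathrm{rev}})$; the cleanest way to see it is the classical fact that for a Hopf algebra with antipode $S$, the element $x\cdot S(\text{something})$ is primitive-like and hence the associated functional is a ``$\ast$-derivation.'' Concretely: expand $\langle A X_1 A^{-1}\mid (u\shuffle v)e_1\rangle$ by the deconcatenation coproduct, match the leading $e_1$ of $u$ (resp.\ $v$) against the $X_1$, use $\langle A\mid -\rangle$ multiplicativity on the part of $u\shuffle v$ lying left of the cut and $\langle A^{-1}\mid -\rangle$ multiplicativity (again from group-likeness, via \eqref{eq:inverse_cof}) on the part lying right of it, and collect terms; the shuffle of the two halves on the left of the cut and the two halves on the right of the cut recombine to give $\alpha^S(e_{k_1,\ldots,k_j,k_d,\ldots,k_{j+1}})$ with overall sign $(-1)^{k_{j+1}+\cdots+k_d}$. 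The only genuinely delicate point — the main obstacle — is verifying that the deconcatenation of $(u\shuffle v)e_1$ interacts correctly with the single distinguished $X_1$ letter and with the constraint that words in $\mathfrak{H}^1$ begin with $e_1$; I would treat the boundary cases $j=0$ (where $u=e_\varnothing$) and the top case separately, and check the generic case by a short induction on $\dep(u)+\dep(v)$ using the inductive definition of $\shuffle$, i.e.\ $e_1u'\shuffle e_1 v' = e_1(u'\shuffle e_1v') + e_1(e_1u'\shuffle v')$, which cleanly isolates which half the leading $e_1$ (to be fed to $X_1$) came from.
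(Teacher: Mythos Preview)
Your opening observation is exactly right and matches the paper: the element $B = AX_1A^{-1}$ is primitive for $\Delta_\shuffle$, so $\langle B\mid w_1\shuffle w_2\rangle = 0$ whenever $w_1,w_2$ are both nonempty. You also correctly locate the obstacle: the trailing $e_1$ prevents you from writing $(u\shuffle v)e_1$ itself as a nontrivial shuffle, so primitivity cannot be applied directly.

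From that point on, however, your proposal is a list of strategies (explicit deconcatenation of $AX_1A^{-1}$, invoking Theorem~\ref{thm:Gon_Ih} ``in spirit'', an induction on depth) none of which is carried to completion. The deconcatenation computation you begin is left unfinished, and the appeal to Theorem~\ref{thm:Gon_Ih}/Corollary~\ref{cor:Gon_Ih} is not appropriate here: those concern the Ihara product $A\circ B$ and the Goncharov coproduct, not the primitive $AX_1A^{-1}$. As written, the argument has a genuine gap at precisely the step you flag.

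The paper closes this gap in one stroke by citing a combinatorial word identity \cite[Lemma~19]{Hirose}: for $u\in\mathfrak{H}^1$ and $v\in\mathfrak{H}$, the elements $(u\shuffle e_1v)e_1$ and $ue_1\,\epsilon(e_1v)$ agree modulo the span of shuffles of nonempty words. Pairing with the primitive $B$ kills that span, so
\[
\alpha^S(u\shuffle e_1v)=\langle B\mid (u\shuffle e_1v)e_1\rangle=\langle B\mid ue_1\,\epsilon(e_1v)\rangle=-\alpha^S(ue_1\epsilon(v)),
\]
using $\epsilon(e_1v)=-\epsilon(v)e_1$. Setting $u=e_{k_1,\ldots,k_j}$ and $e_1v=e_{k_{j+1},\ldots,k_d}$ and unpacking $\epsilon$ gives the lemma. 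Your proposed induction on $\dep(u)+\dep(v)$ via the recursion $e_1u'\shuffle e_1v'=e_1(u'\shuffle e_1v')+e_1(e_1u'\shuffle v')$ could in principle rediscover this identity, but you would have to actually execute it and keep track of the congruence modulo $\mathfrak{H}^+\shuffle\mathfrak{H}^+$ at each step; that combinatorics, not any general Hopf-algebra formality, is the missing content.
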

\begin{proof}
This can be shown by using \cite[Proposition~9.7]{Kaneko}.
Here, we repeat the proof following the idea used in \cite{Hirose,Jarossay,SingerZhao,Tasaka}.

Since $\Delta_\shuffle(X_1)=X_1\otimes 1+1\otimes X_1$, one has 
$\Delta_\shuffle (AX_1A^{-1})=AX_1A^{-1}\otimes 1 + 1\otimes AX_1A^{-1}$.
This means that the power series $AX_1A^{-1}\in R\llangle X_0,X_1\rrangle$ is Lie-like.
In other words, for non-empty words $w_1,w_2\in \{e_0,e_1\}^\times$, we have $\langle AX_1A^{-1}\mid w_1\shuffle w_2\rangle =0$. 
For words $u\in \mathfrak{H}^1$ and $v\in \mathfrak{H}$, one can compute
\begin{align*}
\alpha^S( u\shuffle e_1 v) &= \left\langle AX_1A^{-1} \mid (u\shuffle e_1v)e_1 \right\rangle \\
&=   \left\langle AX_1A^{-1} \mid u e_1 \epsilon(e_1v)  \right\rangle =-\alpha^S( u e_1\epsilon(v)),
\end{align*}
where $\epsilon$ is introduced in \eqref{eq:inverse_cof}. The second equality follows from the identity in \cite[Lemma 19]{Hirose}.
By setting $u=e_{k_1,\ldots,k_j}\in \mathfrak{H}^1$ and $e_1v=e_{k_{j+1},\ldots,k_d}\in \mathfrak{H}$, we obtain the desired result.
\end{proof}

Applying Lemma \ref{lem:linear_shuffle_general} to $\Phi_\shuffle$, we obtain the linear shuffle relation for the symmetric multiple zeta value.
Let
\[\mathsf{Z}^{\shuffle,S}\colon \mathfrak{H}^1\longrightarrow \mathcal{Z};\quad e_\bk \longmapsto \zeta^{\shuffle,S}(\bk)=
\langle \Phi_\shuffle X_1 \Phi_\shuffle^{-1}\mid e_{\bk}e_1\rangle.\]
Note that this is {\em not} an algebra homomorphism. 
Then, for positive integers $d,k_1,\ldots,k_d$ and $j=0,1,\ldots,d-1$, we have
\[ \mathsf{Z}^{\shuffle,S} \big(e_{k_1,\ldots,k_j} \shuffle e_{k_{j+1},\ldots,k_d}\big)= (-1)^{k_{j+1}+\cdots+k_d}\mathsf{Z}^{\shuffle,S}\big(e_{k_1,\ldots,k_j,k_{d},\ldots,k_{j+1}}\big). \]

We remark that, for each index $\bk$, the symmetric multiple zeta values can be written as a $\mathbb{Q}$-linear combination of multiple zeta values due to the shuffle relation.
Conversely, Yasuda showed in \cite[Theorem 6.1]{Yasuda} that every multiple zeta value can be expressed as a $\mathbb{Q}$-linear combination of symmetric multiple zeta values. In contrast, we will observe in Section~\ref{ssc:num_obs} that not every multiple Eisenstein series can be expressed in terms of symmetric multiple Eisenstein series.

\begin{remark}\label{rem:KZ}
The symmetric multiple zeta values were first introduced by Kaneko and Zagier in \cite{KZ} as a real counterpart of finite multiple zeta values; see also \cite[Section 9]{Kaneko} and \cite{Z16}.
For an index $\bk = (k_1,\ldots,k_d)$, the {\em finite multiple zeta value} is defined by
\begin{align*}
\zeta^{\mathcal{A}}(\bk)
=
\Bigl(
\sum_{0 < n_1 < \cdots < n_d < p}
n_1^{-k_1} \cdots n_d^{-k_d}
\ \bmod p
\Bigr)_p
\end{align*}
as an element of the $\mathbb{Q}$-algebra
$
\mathcal{A}
=
\prod_p \mathbb{F}_p \Big/ \bigoplus_p \mathbb{F}_p$ sometimes called the `poor man's ad\`ele ring,'
where $p$ runs over all prime numbers.
Kaneko and Zagier conjectured that, for rational numbers $\{a_{\bk}\}_{\bk} \subset \mathbb{Q}$, the linear relation
$\sum_{\bk} a_{\bk} \, \zeta^{\mathcal{A}}(\bk) = 0$ in $\mathcal{A}$
holds if and only if the congruence $\sum_{\bk} a_{\bk} \, \zeta^{\shuffle,S}(\bk)
\equiv 0 \pmod{\zeta(2)\mathcal{Z}}$
holds.
Generalizations of Kaneko and Zagier's conjecture have been actively studied; see
\cite{OnoSekiYamamoto21,TakeyamaTasaka,Tasaka} and the references therein.

Their connection with modular forms was also observed by Kaneko and Zagier.
More precisely, for even integers $k \ge 4$, they proposed a conjectural dimension formula
\[
\dim_{\mathbb{Q}}
\langle
\zeta^{\mathcal{A}}(\bk)
\mid
\bk \in \mathbb{N}^3,\ \mathrm{wt}(\bk) = k
\rangle_{\mathbb{Q}}
\stackrel{?}{=}
\frac{k}{2} - 2
- \dim_{\mathbb{C}} S_k(\mathrm{SL}_2(\mathbb{Z})),
\]
where $S_k(\mathrm{SL}_2(\Z))$ is the space of cusp forms of weight $k$ for $\mathrm{SL}_2(\Z)$.
For this, one can observe that the $\mathbb{Q}$-vector space on the left-hand side above is generated by the set $B_k=\{\zeta^{\mathcal{A}}(1,r,s)\mid r+s=k-1, r,s\ge2,s\equiv 0\bmod{2}\}$. 
Since $|B_k| = k/2 - 2$, the above dimension conjecture predicts that the elements of
$B_k$ satisfy $\dim_{\mathbb{Q}} S_k^{\mathbb{Q}}(\mathrm{SL}_2(\mathbb{Z}))$
independent linear relations over $\mathbb{Q}$.
This suggests existence of linear relations among finite (or, via the conjectural correspondence, symmetric) \emph{triple} zeta values arising from cusp forms.
The first example is as follows:
\[1470\zeta^{\mathcal{A}}(1,9,2) -1059\zeta^{\mathcal{A}}(1,7,4)+ 669\zeta^{\mathcal{A}}(1,5,6) -238\zeta^{\mathcal{A}}(1,3,8)=0.\]
\end{remark}

\begin{remark}
There are three more remarks on the symmetric form, the right-hand side of \eqref{eq:sym_mzv_reg}.
Firstly, it appears in the explicit formula for the limit of the multiple harmonic $q$-sum $H_n(k_1,\ldots,k_d;q)=\sum_{0<n_1<\cdots<n_d<n}[n_1]_q^{-k_1}\cdots [n_d]_q^{-k_d}$, where $[n]_q:=1+q+\cdots+q^{n-1}=\frac{q^n-1}{q-1}$ is the $q$-integer.
Indeed, for each index $\bk=(k_1,\ldots,k_d)$, the limit $\xi(\bk):=\lim_{n\rightarrow \infty} H_n(\bk;e^{2\pi i/n})$ exists and satisfies
\[ \xi(k_1,\ldots,k_d) \equiv \sum_{j=0}^d (-1)^{k_{j+1}+\cdots+k_d} \zeta^\ast(k_1,\ldots,k_j)\zeta^\ast(k_d,\ldots,k_{j+1})  \mod \pi i,\]
where $\zeta^\ast(\bk)$ denotes the stuffle regularization of $\zeta(\bk)$; see \cite[Theorem 2.10]{BTT} and \cite[the $N=1$ case of Theorem 2.2]{Tasaka}.
Note that the value $\xi(\bk)$ coincides with Hirose's refined symmetric multiple zeta value $\zeta^{RS}(\bk)$ defined by iterated integrals along a loop based at a tangential base point, which gives an iterated integral representation of the symmetric multiple zeta value. See \cite[Remark 11]{Hirose} for details.

Secondly, if multiple zeta values in \eqref{eq:sym_mzv_reg} are replaced with Deligne's $p$-adic multiple zeta values $\zeta_p(\bk)$, then we get a connection with multiple harmonic sums as observed by Akagi, Hirose and Yasuda \cite{AHY} and proved by Jarossay \cite[Corollary 2]{Jarossay1}:
\[ \sum_{j=0}^d (-1)^{k_{j+1}+\cdots+k_d} \zeta_p(k_1,\ldots,k_j)\zeta_p(k_d,\ldots,k_{j+1}) \equiv \sum_{0<n_1<\cdots<n_d<p} n_1^{-k_1}\cdots n_d^{-k_d} \mod p.\]
This congruence relation provides a deep understanding and plays a crucial role in the study of finite multiple zeta values and its generalization; see \cite{OnoSekiYamamoto21,Rosen,TakeyamaTasaka} for further references.

Finally, Komori showed in \cite[Theorem 1.3]{Komori} that a functional version of the symmetric sum \eqref{eq:sym_mzv_reg}, or rather, the function 
\[\sum_{j=0}^d (-1)^{s_{j+1}+\cdots+s_d} \zeta(s_1,\ldots,s_j)\zeta (s_d,\ldots,s_{j+1}) \] 
of several complex variables is entire, where $(-1)^s$ denotes $e^{\pi i s}$.
This is surprising because the multiple zeta function $\zeta(s_1,\ldots,s_d)$ has singularities on infinitely many hyperplanes, as observed in \cite[Theorem 1]{AET} for example, and all these singularities are resolved by the sum of products appearing on the right-hand side of \eqref{eq:sym_mzv_reg}.
See also \cite{KOOT,OnoYamamoto} for related works.
\end{remark}

\subsection{Symmetric multiple Eisenstein series and its basic properties} \label{ssc:SMES}
Along a procedure analogous to the symmetric multiple zeta values, we give a series representation and prove the linear shuffle relation for the symmetric multiple Eisenstein series.

\begin{definition}\label{def:smes}
For each index $\bk=(k_1,\ldots,k_d)$, the symmetric multiple Eisenstein series $G_\bk^{\shuffle,S}$ is defined by
\[G^{\shuffle,S}_{\bk}= \sum_{j=0}^d (-1)^{k_{j+1}+\cdots+k_d}G^{\shuffle}_{k_1,\ldots,k_j} G^{\shuffle}_{k_d,\ldots,k_{j+1}}\in \mathcal{Z}[\pi i]\llbracket q\rrbracket .\]
\end{definition}

Similarly to $\mathsf{Z}^{\shuffle,S}$, let us define a $\Q$-linear map $\mathsf{G}^{\shuffle,S}\colon \mathfrak{H}^1\rightarrow \mathcal{Z}[\pi i]\llbracket q\rrbracket $ by setting $\mathsf{G}^{\shuffle,S}( e_\bk )= G^{\shuffle,S}_\bk$.
As an example, for $d=1$ and $k\in \mathbb{N}$, we have
\begin{equation}\label{eq:dep1} 
\mathsf{G}^\shuffle(e_k)=G^{\shuffle,S}_k  = \begin{cases} 2G_k^\shuffle & k:{\rm even},\\  0 & k:{\rm odd}.\end{cases}
\end{equation}

For the series expression of $G^{\shuffle,S}_{\bk}$, we define a total order $ \prec $ on $ \bigl(\Z\tau + \Z \bigr)\setminus \{0\} $ as a generalization of Kontsevich's order on $\Z\setminus\{0\}$. 
Recall the notation $P_0$ and $P_1$ from \eqref{eq:P}.  
For $ \lambda \in \bigl(\Z\tau + \Z \bigr)\setminus \{0\} $, we write $ \lambda < 0 $ if $ \lambda \notin P := P_0 \cup P_1 $.  

\begin{definition}
For $ \lambda, \mu \in \bigl(\Z\tau + \Z \bigr)\setminus \{0\} $, we write $\lambda \prec \mu$ if either $0 < \lambda < \mu$,  $\mu < 0 < \lambda$ or $\lambda < \mu <0$ holds.
\end{definition}

This defines a total order $\prec$ on $\bigl(\Z\tau + \Z \bigr)\setminus \{0\}$, which is obtained from the same idea as in Kontsevich's order \eqref{eq:order_int};
more precisely, we identify the maximal element $\infty\tau + \infty$ and the minimal element $-\infty\tau - \infty$ in the totally ordered set $(\mathbb{Z}\tau + \mathbb{Z}, <)$.
With this interpretation, Kontsevich's order \eqref{eq:order_int} appears as a special case of the above order.

Let us illustrate an example.
We first decompose the set
\[\{(\lambda,\mu) \in \bigl((\Z\tau+\Z)\setminus\{0\}\bigr)^2\mid \lambda\prec \mu\}\] 
into the disjoint union of the following three subsets
\begin{equation*}\label{eq:decomp_prec}
\begin{aligned}
&\{\, (\lambda, \mu) \in \bigl((\Z\tau + \Z)\setminus \{0\}\bigr)^2 \mid 0 < \lambda < \mu \,\},\\
&\{\, (\lambda, \mu) \in \bigl((\Z\tau + \Z)\setminus \{0\}\bigr)^2 \mid \mu < 0 < \lambda \,\},\\
&\{\, (\lambda, \mu) \in \bigl((\Z\tau + \Z)\setminus \{0\}\bigr)^2 \mid \lambda < \mu < 0 \,\}.
\end{aligned}
\end{equation*}
From this, for $r,s\ge3$, we obtain
\[ \sum_{\substack{\lambda\prec \mu\\ \lambda,\mu \in (\Z\tau+\Z) \setminus\{0\}}} \frac{1}{\lambda^{r} \mu^{s}}  = G_{r,s}(\tau)+G_{r}(\tau)\cdot (-1)^{s}G_{s}(\tau)+(-1)^{r+s}G_{s,r}(\tau),\]
which coincides with the symmetric double Eisenstein series $G^{\shuffle,S}_{r,s}(\tau)$. The condition $r,s\geq 3$ is imposed for absolute convergence.
In general, we obtain the following result. Recall that $\mathbb{Z}_M$ denotes $\{m\in \mathbb{Z}\mid -M\leq m\leq M\}$ for $M\geq 0$.

\begin{proposition}\label{prop:stuffle}
For positive integers $k_1,\ldots,k_d\ge2$, we have
\begin{equation}\label{eq:G^S}
\begin{aligned}
G^{\shuffle,S}_{k_1,\ldots,k_d}(\tau)& = \lim_{M\rightarrow\infty}\lim_{N\rightarrow \infty} \sum_{\substack{\lambda_1\prec \cdots \prec \lambda_d\\ \lambda_1,\ldots,\lambda_d \in (\Z_M\tau+\Z_N) \setminus\{0\}}} \frac{1}{\lambda_1^{k_1}\cdots \lambda_d^{k_d}} .
\end{aligned}
\end{equation}
Hence, for indices $\bk$ and $\bl$ whose entries are all strictly greater than $1$, we have
\[  \mathsf{G}^{\shuffle,S} (e_\bk\ast e_{\bl})= \mathsf{G}^{\shuffle,S} (e_\bk) \mathsf{G}^{\shuffle,S} (e_{\bl}).\]
\end{proposition}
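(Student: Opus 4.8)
The plan is to first establish the series representation \eqref{eq:G^S}, and then deduce the stuffle relation from it by the standard argument sketched in Section~\ref{subsec:alg_setup}, since $\prec$ is a total order.

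For the series representation, I would start from the decomposition of the index set $\{(\lambda_1,\ldots,\lambda_d)\mid \lambda_1\prec\cdots\prec\lambda_d\}$ according to the unique index $j\in\{0,1,\ldots,d\}$ at which the ``jump'' $\lambda_j\prec(\infty\tau+\infty=-\infty\tau-\infty)\prec\lambda_{j+1}$ occurs; this mirrors the corresponding decomposition for $\zeta^S$ recalled in Section~\ref{ssc:review_SMZV}. Each block splits as a product of a chain $0<\lambda_1<\cdots<\lambda_j$ (contributing, in the truncated-then-limit sense of Definition~\ref{def:MES}, $G_{k_1,\ldots,k_j}(\tau)$) and a chain $0<-\lambda_d<\cdots<-\lambda_{j+1}$ (contributing $(-1)^{k_{j+1}+\cdots+k_d}G_{k_d,\ldots,k_{j+1}}(\tau)$ after factoring the signs out of the $\lambda_i^{-k_i}$). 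Summing over $j$ gives exactly the defining expression in Definition~\ref{def:smes}, using that $G_{\bk}^{\shuffle}(\tau)=G_{\bk}(\tau)$ when all entries of $\bk$ exceed $1$ (Theorem~\ref{thm:G^sh}). The one subtlety is that the iterated limit $\lim_M\lim_N$ must be taken uniformly across all $d+1$ blocks at once: the lattice-point truncation $\Z_M\tau+\Z_N$ must be compatible with the decomposition. Here one uses that $\lambda\in P_1$ (resp.\ $-\lambda\in P_1$) depends only on the sign of the $\tau$-coordinate, and $\lambda\in P_0$ (resp.\ $-\lambda\in P_0$) only on the sign of the integer coordinate when the $\tau$-coordinate vanishes; so the truncated region splits cleanly and one may invoke the same conditional-convergence bookkeeping that underlies Definition~\ref{def:MES}. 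I expect \emph{this uniformity of the truncation across blocks} to be the main technical obstacle — everything else is the combinatorial rearrangement already familiar from the $\zeta^S$ case.

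Granting \eqref{eq:G^S}, the stuffle relation is immediate. The set $(\Z\tau+\Z)\setminus\{0\}$ is totally ordered by $\prec$, and for indices $\bk,\bl$ with all entries $\ge2$ all the series in question converge absolutely; hence the product of the two nested sums representing $\mathsf{G}^{\shuffle,S}(e_{\bk})$ and $\mathsf{G}^{\shuffle,S}(e_{\bl})$ may be rearranged, by sorting the combined set of summation variables with respect to $\prec$ and allowing collisions, into a $\Q$-linear combination of nested sums of the same shape, precisely according to the stuffle product $e_{\bk}\ast e_{\bl}$. (The collision terms $\lambda_i=\mu_j$ produce the $k_i+k_j$-type entries; since all such exponents are again $\ge2$, the corresponding sums still converge and still have the representation \eqref{eq:G^S}.) This is exactly the argument illustrated in Section~\ref{subsec:alg_setup} for $\zeta(k_1)\zeta(k_2)$, now applied verbatim to our setting, and it yields $\mathsf{G}^{\shuffle,S}(e_{\bk}\ast e_{\bl})=\mathsf{G}^{\shuffle,S}(e_{\bk})\,\mathsf{G}^{\shuffle,S}(e_{\bl})$, completing the proof.
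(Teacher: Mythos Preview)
Your approach is essentially identical to the paper's: decompose the $\prec$-chain according to the index $j$ where the sign change occurs, recognize each block as a truncated multiple Eisenstein sum (times a sign), pass to the iterated limit, and invoke Theorem~\ref{thm:G^sh} to replace $G_{\bk}$ by $G_{\bk}^{\shuffle}$.

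One imprecision to fix: you write that for entries $\ge 2$ ``all the series in question converge absolutely,'' and use this to justify the stuffle rearrangement. That is not correct---as the paper notes, the condition $k_1,\ldots,k_d\ge 2$ only guarantees \emph{conditional} convergence of \eqref{eq:G^S} (absolute convergence of $G_{\bk}$ requires $k_d\ge 3$). The right way to run the stuffle argument is the one you already have the ingredients for: at each fixed truncation level $(M,N)$ the sums are finite, so the stuffle identity is a purely combinatorial equality of finite sums over the totally ordered set $(\Z_M\tau+\Z_N)\setminus\{0\}$; then take the iterated limit $\lim_M\lim_N$ on both sides. No absolute convergence is needed.
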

\begin{proof}
The fact that symmetric multiple Eisenstein series satisfy the stuffle relation follows directly from the series representation \eqref{eq:G^S} together with the general framework of the stuffle relation described in Section~\ref{subsec:alg_setup}.
So, it suffices to prove the formula \eqref{eq:G^S}.
Let $P_{M,N}:=P\cap \big(\Z_M\tau+\Z_N \big)$, where $P= P_0 \cup P_1 $.
We now divide the sum on the right-hand side of \eqref{eq:G^S} into partial sums, corresponding to the index $j$ where the jump $\lambda_j\prec \bigl(\infty\tau+\infty=-\infty \tau-\infty \bigr) \prec \lambda_{j+1}$ occurs, as 
\begin{align*} 
\sum_{\substack{\lambda_1\prec \cdots \prec \lambda_d\\ \lambda_1,\ldots,\lambda_d \in (\Z_M\tau+\Z_N) \setminus\{0\}}} \frac{1}{\lambda_1^{k_1}\cdots \lambda_d^{k_d}} &= \sum_{j=0}^d \sum_{\substack{\lambda_1\prec \cdots \prec \lambda_d\\ \lambda_1,\ldots,\lambda_j\in P_{M,N}\\-\lambda_{j+1},\ldots,-\lambda_d\in P_{M,N}}} \frac{1}{\lambda_1^{k_1}\cdots \lambda_d^{k_d}}\\
&= \sum_{j=0}^d\sum_{\substack{\lambda_1< \cdots < \lambda_j\\ \lambda_1,\ldots,\lambda_j\in P_{M,N}}} \frac{1}{\lambda_1^{k_1}\cdots \lambda_d^{k_j}} \sum_{\substack{-\lambda_{j+1}< \cdots < -\lambda_d\\ \lambda_{j+1},\ldots,\lambda_d\in P_{M,N}}} \frac{1}{\lambda_{j+1}^{k_{j+1}}\cdots \lambda_d^{k_d}}.
\end{align*}
Thus, for $k_1,\ldots,k_d\ge2$, one gets
\[\lim_{M\rightarrow\infty}\lim_{N\rightarrow \infty} \sum_{\substack{\lambda_1\prec \cdots \prec \lambda_d\\ \lambda_1,\ldots,\lambda_d \in (\Z_M\tau+\Z_N) \setminus\{0\}}} \frac{1}{\lambda_1^{k_1}\cdots \lambda_d^{k_d}}= \sum_{j=0}^d (-1)^{k_{j+1}+\cdots+k_d} G_{k_1,\ldots,k_j}(\tau)G_{k_d,\ldots,k_{j+1}}(\tau).\]
By Theorem \ref{thm:G^sh}, the right-hand side of the above equation coincides with $G^{\shuffle,S}_{k_1,\ldots,k_d}(\tau)$.
Thus, the desired equation \eqref{eq:G^S} follows. Note that the assumption $k_1,\ldots,k_d\geq 2$ is imposed for conditional convergence of \eqref{eq:G^S}.
\end{proof}

As an analogue to Proposition \ref{prop:sym_MZV_gen}, we can rewrite the symmetric multiple Eisenstein series in terms of the power series $\Gamma_\mathrm{ME}$ defined in \eqref{eq:Gamma_MEMD}.

\begin{proposition}\label{prop:sym_MES_gen}
For positive integers $k_1,\ldots,k_d$, we have
\begin{align*}
 G^{\shuffle,S}_{k_1,\ldots,k_d}&=\langle \Gamma_\mathrm{ME} X_1 \Gamma_\mathrm{ME}^{-1}\mid e_1e_0^{k_1-1}\cdots e_1e_0^{k_d-1}e_1\rangle .
\end{align*}
\end{proposition}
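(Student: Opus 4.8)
The plan is to mimic the proof of Proposition~\ref{prop:sym_MZV_gen} verbatim, replacing the group-like element $\Phi_\shuffle$ by $\Gamma_\mathrm{ME}$ and the algebra homomorphism $\mathsf{Z}^\shuffle$ by $\mathsf{G}^\shuffle$. First I would invoke Theorem~\ref{thm:Gamma_ME}, which tells us that $\Gamma_\mathrm{ME}\in\gp(\mathcal{Z}[\pi i]\llbracket q\rrbracket)$, so that it is genuinely a group-like element of $R\llangle X_0,X_1\rrangle$ with $R=\mathcal{Z}[\pi i]\llbracket q\rrbracket$, and hence has a well-defined inverse $\Gamma_\mathrm{ME}^{-1}=\sigma(\Gamma_\mathrm{ME})$ in the sense of \eqref{eq:inverse_cof}. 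Then the expression $\Gamma_\mathrm{ME}X_1\Gamma_\mathrm{ME}^{-1}$ makes sense as an element of $R\llangle X_0,X_1\rrangle$.

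Next I would expand the pairing using the deconcatenation formula for the product of group-like elements, exactly as in the cited proof: for $S_1,S_2\in\gp(R)$ and a word $w$ one has $\langle S_1S_2\mid w\rangle=\sum_{uv=w}\langle S_1\mid u\rangle\langle S_2\mid v\rangle$, where the sum is over all deconcatenations including the trivial ones. Applying this twice (once to split off $X_1$, once more to split $\Gamma_\mathrm{ME}$ from $\Gamma_\mathrm{ME}^{-1}$) and using that $\langle X_1\mid w\rangle$ is nonzero only for $w=e_1$, the pairing $\langle\Gamma_\mathrm{ME}X_1\Gamma_\mathrm{ME}^{-1}\mid e_1e_0^{k_1-1}\cdots e_1e_0^{k_d-1}e_1\rangle$ collapses to a sum over $j=0,\ldots,d$ of terms
\[
\langle\Gamma_\mathrm{ME}\mid e_1e_0^{k_1-1}\cdots e_1e_0^{k_j-1}\rangle\cdot\langle X_1\mid e_1\rangle\cdot\langle\Gamma_\mathrm{ME}^{-1}\mid e_0^{k_{j+1}-1}e_1e_0^{k_{j+2}-1}\cdots e_1e_0^{k_d-1}e_1\rangle.
\]
Here I should be a little careful: the splitting point must fall on the unique occurrence of $X_1$ that is contributed by the middle factor, and the factorization of the word $e_\bk e_1$ at each of its $d+1$ letters $e_1$ yields exactly the $j=0,\ldots,d$ terms; the $e_0$-blocks never carry the $X_1$.

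Finally I would identify the two factors. By definition $\langle\Gamma_\mathrm{ME}\mid e_{k_1,\ldots,k_j}\rangle=\mathsf{G}^\shuffle(e_{k_1,\ldots,k_j})=G^\shuffle_{k_1,\ldots,k_j}$, and by \eqref{eq:inverse_cof} together with $\epsilon(e_{k_{j+1}}\cdots e_{k_d}e_1)=(-1)^{k_{j+1}+\cdots+k_d+1}e_1e_{k_d}\cdots e_{k_{j+1}}$ — note $\wt$ of the word $e_1e_0^{k_{j+1}-1}\cdots e_1e_0^{k_d-1}e_1$ as a length is $k_{j+1}+\cdots+k_d+1$, so the sign is $(-1)^{k_{j+1}+\cdots+k_d+1}$, and after cancelling the $(-1)$ from $\langle X_1\mid e_1\rangle$ being $+1$ one gets $(-1)^{k_{j+1}+\cdots+k_d}$ — the second factor equals $(-1)^{k_{j+1}+\cdots+k_d}\,\langle\Gamma_\mathrm{ME}\mid e_1e_0^{k_d-1}\cdots e_1e_0^{k_{j+1}-1}\rangle=(-1)^{k_{j+1}+\cdots+k_d}G^\shuffle_{k_d,\ldots,k_{j+1}}$. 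Summing over $j$ recovers Definition~\ref{def:smes}, namely $G^{\shuffle,S}_{k_1,\ldots,k_d}=\sum_{j=0}^d(-1)^{k_{j+1}+\cdots+k_d}G^\shuffle_{k_1,\ldots,k_j}G^\shuffle_{k_d,\ldots,k_{j+1}}$. The only genuinely delicate point is bookkeeping the sign coming from $\epsilon$ and making sure the middle $X_1$ is placed correctly in the deconcatenation; everything else is formal and identical to the multiple zeta value case, so I expect no real obstacle.
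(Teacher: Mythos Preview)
Your proposal is correct and follows exactly the paper's approach, which simply says to repeat the proof of Proposition~\ref{prop:sym_MZV_gen} with $\Phi_\shuffle$ replaced by $\Gamma_\mathrm{ME}$. Your sign bookkeeping is slightly muddled---the right factor after deconcatenation is $e_0^{k_{j+1}-1}e_1\cdots e_1e_0^{k_d-1}e_1$, of length $k_{j+1}+\cdots+k_d$ (not $+1$), so applying $\epsilon$ gives the sign $(-1)^{k_{j+1}+\cdots+k_d}$ directly without any ``cancellation'' against $\langle X_1\mid e_1\rangle=1$---but your displayed formula and final conclusion are correct.
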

\begin{proof}
The result is shown in much the same way as Proposition~\ref{prop:sym_MZV_gen}.
\end{proof}

We now prove that the symmetric multiple Eisenstein series satisfies the linear shuffle relation.
 
\begin{proposition}\label{prop:shuffle_linear_smes}
Let $d\ge1$.
For positive integers $k_1,\ldots,k_d$ and $j=0,1,\ldots,d-1$, we have
\[ \mathsf{G}^{\shuffle,S} \big(e_{k_1,\ldots,k_j} \shuffle e_{k_{j+1},\ldots,k_d}\big)=   (-1)^{k_{j+1}+\cdots+k_d}\mathsf{G}^{\shuffle,S} \big(e_{k_1,\ldots,k_j,k_{d},\ldots,k_{j+1}}\big). \]
\end{proposition}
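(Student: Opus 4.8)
The plan is to deduce Proposition~\ref{prop:shuffle_linear_smes} from Proposition~\ref{prop:sym_MES_gen} by the same mechanism that was used to derive the linear shuffle relation for symmetric multiple zeta values, i.e.\ by applying the abstract Lemma~\ref{lem:linear_shuffle_general} to the group-like element $\Gamma_\mathrm{ME}$ in place of $\Phi_\shuffle$. First I would note that $\Gamma_\mathrm{ME}\in \gp(\mathcal{Z}[\pi i]\llbracket q\rrbracket)$ by Theorem~\ref{thm:Gamma_ME}, so that the algebra homomorphism $\alpha\colon \mathfrak{H}_\shuffle\to \mathcal{Z}[\pi i]\llbracket q\rrbracket$, $w\mapsto \langle \Gamma_\mathrm{ME}\mid w\rangle$, is of the type required by Lemma~\ref{lem:linear_shuffle_general}. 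Its symmetrization $\alpha^S(w)=\langle \Gamma_\mathrm{ME} X_1 \Gamma_\mathrm{ME}^{-1}\mid we_1\rangle$ then satisfies, for any positive integers $k_1,\ldots,k_d$ and any $j\in\{0,1,\ldots,d-1\}$,
\[
\alpha^S\big(e_{k_1,\ldots,k_j}\shuffle e_{k_{j+1},\ldots,k_d}\big)=(-1)^{k_{j+1}+\cdots+k_d}\,\alpha^S\big(e_{k_1,\ldots,k_j,k_d,\ldots,k_{j+1}}\big).
\]
Second, I would identify $\alpha^S(e_\bk)$ with $\mathsf{G}^{\shuffle,S}(e_\bk)$: by Proposition~\ref{prop:sym_MES_gen}, $\alpha^S(e_{k_1,\ldots,k_d})=\langle \Gamma_\mathrm{ME} X_1 \Gamma_\mathrm{ME}^{-1}\mid e_1e_0^{k_1-1}\cdots e_1e_0^{k_d-1}e_1\rangle = G^{\shuffle,S}_{k_1,\ldots,k_d}=\mathsf{G}^{\shuffle,S}(e_\bk)$, and extending $\Q$-linearly gives $\alpha^S(u)=\mathsf{G}^{\shuffle,S}(u)$ for all $u\in\mathfrak{H}^1$. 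Substituting this into the displayed identity yields exactly the claimed relation.

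The one point that needs a little care is the linearity bookkeeping: Lemma~\ref{lem:linear_shuffle_general} is stated for individual words, whereas here we want to apply the conclusion to the element $e_{k_1,\ldots,k_j}\shuffle e_{k_{j+1},\ldots,k_d}\in\mathfrak{H}^1_\shuffle$, which is a $\Q$-linear combination of words $e_\bl$ with $\wt(\bl)=k_1+\cdots+k_d$ and $\dep(\bl)=d$. Since $\mathsf{G}^{\shuffle,S}$ is $\Q$-linear and $\alpha^S$ is $\Q$-linear in $w$ (because $w\mapsto \langle \Gamma_\mathrm{ME} X_1 \Gamma_\mathrm{ME}^{-1}\mid we_1\rangle$ is), the identification $\alpha^S(e_\bl)=\mathsf{G}^{\shuffle,S}(e_\bl)$ on words propagates to the shuffle product, so no obstruction arises. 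Everything else is a direct transcription of the proof of Proposition~\ref{prop:sym_MZV_gen} and of the argument already recorded in the remark following Lemma~\ref{lem:linear_shuffle_general}; indeed the statement here is nothing but the $A=\Gamma_\mathrm{ME}$ instance of what was the $A=\Phi_\shuffle$ instance there.

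The main (and essentially only) obstacle is to make sure that $\Gamma_\mathrm{ME}$ genuinely lies in $\gp(R)$ for the coefficient ring $R=\mathcal{Z}[\pi i]\llbracket q\rrbracket$, so that the Lie-likeness of $\Gamma_\mathrm{ME} X_1\Gamma_\mathrm{ME}^{-1}$ and the reversal identity \cite[Lemma~19]{Hirose} used in the proof of Lemma~\ref{lem:linear_shuffle_general} apply verbatim over this ring. This is guaranteed by Theorem~\ref{thm:Gamma_ME}, so in the write-up I would simply invoke that theorem and then say that the proof is identical to that of Proposition~\ref{prop:sym_MZV_gen} with $\Phi_\shuffle$ replaced by $\Gamma_\mathrm{ME}$ (using Proposition~\ref{prop:sym_MES_gen} in place of Proposition~\ref{prop:sym_MZV_gen}), without grinding through the deconcatenation computation again.
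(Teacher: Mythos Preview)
Your proposal is correct and follows exactly the paper's approach: the paper's proof is the one-line observation that since $\Gamma_{\mathrm{ME}}$ is group-like (by Theorem~\ref{thm:Gamma_ME}), the claim is a direct consequence of Lemma~\ref{lem:linear_shuffle_general}, which is precisely what you do, with Proposition~\ref{prop:sym_MES_gen} supplying the identification $\alpha^S(e_\bk)=\mathsf{G}^{\shuffle,S}(e_\bk)$.
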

\begin{proof}
Since $\Gamma_{\rm ME}$ is group-like, this is a direct consequence of Lemma \ref{lem:linear_shuffle_general}.
\end{proof}

The linear shuffle relation for the case $j=0$ is called the \emph{reversal relation}:
\begin{equation}\label{eq:reversal}
G^{\shuffle,S}_{k_1,\ldots,k_{d}}= (-1)^{k_{1}+\cdots+k_d} G^{\shuffle,S}_{k_{d},\ldots,k_{1}} \qquad (k_1,\ldots,k_d\geq 1).
\end{equation}
From this, we obtain $G^{\shuffle,S}_{\bk}=0$ if $\mathrm{wt}(\bk)$ is odd and the index $\bk$ is symmetric, that is, $\bk=(k_1,\ldots,k_d)=(k_d,\ldots,k_1)$ holds.

\subsection{Numerical observations} \label{ssc:num_obs}

Using Proposition \ref{prop:shuffle_linear_smes} and a computer (Wolfram Mathematica), we can obtain an upper bound of the dimension of the $\Q$-vector space $\mathcal{E}_k^S$ spanned by all $G_{\bk}^{\shuffle,S}$ of weight $k$. The results are listed in Table \ref{table:upper_bound}.
\begin{table}[H]
\caption{Upper bounds obtained from the linear shuffle relation}
\label{table:upper_bound}
\vspace*{0.7em}
\begin{tabular}{c|ccccccccccccc}
$k$ & 2 & 3 & 4 & 5 & 6 & 7 & 8 & 9 & 10&11\\ \hline
$\dim_\Q \mathcal{E}_k^S \le $ & 1 & 1 & 3 & 3 & 9 & 12 & 26 & 43 & 87 & 149 &  \\\hline
\end{tabular}
\end{table}

After reducing the number of generators by the linear shuffle relation as in Table \ref{table:upper_bound}, we could compute the numerical dimension of the space $\mathcal{E}_k^S$ up to weight 9.
Results are listed below (Table \ref{table:numerical_dim}).
\begin{table}[H]
\caption{Numerical dimensions}
\label{table:numerical_dim}
\vspace*{0.7em}
\begin{tabular}{c|ccccccccccccc}
$k$ & 2 & 3 & 4 & 5 & 6 & 7 & 8 & 9   \\ \hline
$\dim_\Q\mathcal{E}^S_k$ & 1 & 1 & 3 & 3 & 8 & 12 & 23 & 41 \\\hline
\end{tabular}
\end{table}

Comparing this with Table \ref{table:dim_E_k}, we observe that the space $\mathcal{E}_k^{S}$ is a proper subspace of the $\Q$-vector space spanned by all shuffle regularized multiple Eisenstein series of weight $k$.
Namely, not every $G_{\bk}^\shuffle$ can be expressed as a linear combination of symmetric multiple Eisenstein series, contrary to Yasuda's result \cite[Theorem 6.1]{Yasuda} for symmetric multiple zeta values.

\section{Linear shuffle space}
After reviewing the power series expression of the shuffle relation in Section~\ref{ssc:ps_shuffle}, we introduce the \emph{linear shuffle space} ${\rm LSh}^{(d)}_w$ in Section~\ref{ssc:lin_shuffle}, as the space consisting of rational polynomials in $d$ variables of homogeneous degree $w$ which are solutions to the linear shuffle equations.
We show that the dimension of the $\Q$-vector space $\mathcal{E}_{k}^{S,(d)}$ spanned by symmetric multiple Eisenstein series of weight $k$ and depth $d$ is bounded above by that of ${\rm LSh}^{(d)}_{k-d}$ (Proposition~\ref{prop:bound_E}).

\subsection{Power series expression of the shuffle relation} \label{ssc:ps_shuffle}
We rewrite the linear shuffle relation (Proposition \ref{prop:shuffle_linear_smes}) of depth $d$ in terms of polynomials in $d$ indeterminants. 
For this, let us first recall several standard notation for the expression of the shuffle relation in terms of commutative power series. Refer also to \cite[Section 8]{IKZ06}.

Let $R$ be a $\Q$-algebra and $R\llbracket x_1,\ldots,x_d\rrbracket $ the formal power series ring in $d$ indeterminants over $R$.
For $f^{(d)}\in R\llbracket x_1,\ldots,x_d\rrbracket $, we define the {\em $\sharp$-operator} by
\[ f^{(d)}(x_1,\ldots,x_d)^\sharp=f^{(d)}(x_1,x_1+x_2,\ldots,x_1+\cdots+x_d). \]
For $d\ge2$ and $j=1,2,\ldots,d-1$, the {\em $j$-th shuffle operator} $\mathrm{sh}_j^{(d)}=\mathrm{sh}_j$ is defined by 
\[ f^{(d)}(x_1,\ldots,x_d) \big| \mathrm{sh}_j := \sum_{\substack{\sigma\in\mathfrak{S}_d\\ \sigma(1)<\cdots<\sigma(j)\\\sigma(j+1)<\cdots<\sigma(d)}} f^{(d)}(x_{\sigma^{-1}(1)},\ldots,x_{\sigma^{-1}(d)}),\]
where $\mathfrak{S}_d$ is the symmetric group of degree $d$, viewed as the set of bijections on $\{1,2,\ldots,d\}$.

Now suppose that a family of elements $A_{\bk}\in R$ satisfy the shuffle relation; namely the $\Q$-linear map $\alpha \colon \mathfrak{H}_\shuffle^1\rightarrow R$ defined for each index $\bk$ by $\alpha(e_{\bk})=A_{\bk}$ is an algebra homomorphism.
Then, for $d\ge2$ and $j=1,2,\ldots, d-1$, the generating function $f^{(d)}(x_1,\ldots,x_d)=\sum_{k_1,\ldots,k_d\ge1} A_{k_1,\ldots,k_d}x_1^{k_1-1}\cdots x_d^{k_d-1}$ satisfies
\[
f^{(j)} (x_1,\ldots,x_j)^\sharp  f^{(d-j)} (x_{j+1},\ldots,x_d)^\sharp = \big(f^{(d)} (x_1,\ldots,x_d)\big|\mathrm{sh}_j\big)^\sharp.
\]
We let $\flat$ denote the inverse of the $\sharp$-operator, which is explicitly given by 
\[ f^{(d)}(x_1,\ldots,x_d)^{\flat}= f^{(d)}(x_1,x_2-x_1,\ldots,x_d-x_{d-1}). \]
Then,  by letting
\[ f^{(d)} (x_1,\ldots,x_d)^{\flat_j}:=f^{(d)}(x_1,x_2-x_1,\ldots,x_j-x_{j-1},x_{j+1},x_{j+2}-x_{j+1},\ldots,x_{d}-x_{d-1})\]
for $j=1,2,\ldots,d-1$, we get
\begin{align} \label{eq:shuffle_sharp_flat}
 f^{(j)} (x_1,\ldots,x_j)  f^{(d-j)} (x_{j+1},\ldots,x_d) = \big(\big(f^{(d)} (x_1,\ldots,x_d)\big|\mathrm{sh}_j\big)^\sharp\big)^{\flat_j}.
\end{align}
Comparing the coefficients, we obtain $A_{k_1,\ldots,k_j}A_{k_{j+1},\ldots,k_d}=\alpha(e_{k_1,\ldots,k_j}\shuffle e_{k_{j+1},\ldots,k_d})$.
For simplicity, we write $f^{(d)} (x_1,\ldots,x_d)\big|\mathrm{sh}_j'$ for the right-hand side of \eqref{eq:shuffle_sharp_flat}. Note that the polynomial ring $R[x_1,x_2,\ldots,x_d]$ is an $R$-subalgebra stable under all the operations $\sharp$, $\flat$, $\flat_j$, $\mathrm{sh}_j$ and $\mathrm{sh}_j'$.

Let us illustrate examples.
The operator $\mathrm{sh}_1'$ on $R\llbracket x_1,x_2\rrbracket $ is calculated as
\begin{equation*}\label{eq:LSh^2}
\begin{aligned}
f^{(2)}(x_1,x_2)\big|\mathrm{sh}_1' &=\big(\big( f^{(2)}(x_1,x_2)\big|\mathrm{sh}_1\big)^\sharp\big)^{\flat_1}
=\big(\big( f^{(2)}(x_1,x_2)+f^{(2)}(x_2,x_1)\big)^\sharp \big)^{ \flat_1} \\
&=\big( f^{(2)}(x_1,x_1+x_2)+f^{(2)}(x_2,x_1+x_2)\big)^{ \flat_1}\\
&= f^{(2)}(x_1,x_1+x_2)+f^{(2)}(x_2,x_1+x_2).
\end{aligned}
\end{equation*}
By a similar calculation, for the case $d=3$, we have
\begin{align*}
f^{(3)}(x_1,x_2,x_3)\big| \mathrm{sh}_1'
&=f^{(3)} (x_1,x_{12},x_{13})+f^{(3)}(x_2,x_{12},x_{13})+f^{(3)}(x_2,x_3,x_{13}),\\
f^{(3)}(x_1,x_2,x_3)\big| \mathrm{sh}_2'&=f^{(3)} (x_1,x_2,x_{23})+f^{(3)}(x_1,x_{13},x_{23})+f^{(3)}(x_3,x_{13},x_{23})  ,
\end{align*}
where we set $x_{ij}=x_i+x_j$.

\subsection{Linear shuffle relation revisited} \label{ssc:lin_shuffle}

We introduce the linear shuffle space $ {\rm LSh}^{(d)}_w$ of degree $w$ as a subspace of $\mathbb{Q}[x_1,\ldots,x_d]$ and show that its dimension gives an upper bound of the dimension of the space of symmetric multiple Eisenstein series.

For $j=0,1,\ldots,d-1$, we set
\[ f(x_1,\ldots,x_d)\big| p_j := (-1)^{d-j} f(x_1,\ldots,x_j,-x_d,\ldots,-x_{j+1}).\]
For positive integers $w$ and $d$, denote by $V^{(d)}_{w}$ the subspace of $\Q[x_1,\ldots,x_d]$ consisting of homogeneous polynomials of degree $w$.
We set $V_0^{(d)}:=\Q$.

\begin{definition} \label{def:lin_sh_space}
 For integers $w\ge0$ and $d\ge1$, the \emph{linear shuffle space} ${\rm LSh}^{(d)}_w$ of degree $w$ and depth $d$ is defined by 
\begin{align*} \label{eq:LSh_def}
 {\rm LSh}^{(d)}_w:=\{ f\in V^{(d)}_{w}\mid f \big|\mathrm{sh}_j' =f\big|p_j \ (j=1,2,\ldots,d-1),\ f=f\big| p_0\}.
\end{align*}
\end{definition}

For $d=1$, we have $ {\rm LSh}^{(1)}_w=\{f\in V^{(1)}_w\mid f(x_1)=-f(-x_1)\}$, which equals $\Q x_1^w$ if $w$ is odd and 0 otherwise.
Note that $ {\rm LSh}^{(d)}_0=0$.

For positive integers $k$ and $d$ with $k\ge d$, let $\mathcal{E}^{S,( d)}_k$ denote the $\Q$-vector space spanned by all symmetric multiple Eisenstein series of weight $k$ and depth $d$:
\begin{equation}\label{eq:E_k^S,d}
\mathcal{E}^{S,( d)}_k :=\langle G_{\bk}^{\shuffle,S} \mid \bk\in \N^d, \, \wt (\bk)=k\rangle_\Q.
\end{equation}

\begin{proposition}\label{prop:bound_E}
For integers $k\ge d\ge1$, the dimension of the space $\mathcal{E}^{S, (d)}_k$ is bounded above by that of $ {\rm LSh}^{(d)}_{k-d}:$
\begin{align*} \label{eq:upper_bound}
 \dim_{\mathbb{Q}} \mathcal{E}_k^{S,( d)} \le \dim_{\mathbb{Q}} {\rm LSh}^{(d)}_{k-d}.
\end{align*}
\end{proposition}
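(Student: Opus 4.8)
The plan is to exhibit a surjective $\Q$-linear map from the linear shuffle space ${\rm LSh}^{(d)}_{k-d}$ onto the \emph{dual} of $\mathcal{E}_k^{S,(d)}$, or equivalently, to show that every $\Q$-linear relation among the symmetric multiple Eisenstein series $\{G_\bk^{\shuffle,S}\mid \bk\in\N^d,\ \wt(\bk)=k\}$ is ``detected'' by an element of ${\rm LSh}^{(d)}_{k-d}$. First I would package a hypothetical relation $\sum_{\wt(\bk)=k,\,\dep(\bk)=d} c_\bk\, G_\bk^{\shuffle,S}=0$ into the homogeneous polynomial $\varphi_{\mathbf c}(x_1,\ldots,x_d):=\sum c_{k_1,\ldots,k_d}\,x_1^{k_1-1}\cdots x_d^{k_d-1}\in V^{(d)}_{k-d}$, mirroring the generating-series conventions of Section~\ref{ssc:ps_shuffle}. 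The claim I need is that $\varphi_{\mathbf c}\in{\rm LSh}^{(d)}_{k-d}$, i.e. that it satisfies $\varphi_{\mathbf c}|\mathrm{sh}_j'=\varphi_{\mathbf c}|p_j$ for $j=1,\ldots,d-1$ and $\varphi_{\mathbf c}=\varphi_{\mathbf c}|p_0$. This is exactly the polynomial transcription of the linear shuffle relation of Proposition~\ref{prop:shuffle_linear_smes} (together with the reversal relation \eqref{eq:reversal} for the $p_0$-condition), so the content is really a duality/pairing statement rather than new analysis.

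The key steps, in order, are as follows. (1) Set up the pairing: for a homogeneous $f=\sum a_{\bk}x_1^{k_1-1}\cdots x_d^{k_d-1}\in V^{(d)}_{k-d}$ and the family $(G_\bk^{\shuffle,S})$, define $\langle f, \cdot\rangle$ by sending the generating function to $\sum a_\bk G_\bk^{\shuffle,S}$; the point is that $V^{(d)}_{k-d}$ is naturally dual to the span of monomials $x_1^{k_1-1}\cdots x_d^{k_d-1}$ with $\wt(\bk)=k$, which in turn surjects onto $\mathcal{E}_k^{S,(d)}$. (2) Translate the two defining conditions of ${\rm LSh}^{(d)}_{k-d}$: using \eqref{eq:shuffle_sharp_flat} and the definition of $|p_j$, the condition $f|\mathrm{sh}_j'=f|p_j$ is equivalent, after comparing coefficients of $x_1^{k_1-1}\cdots x_d^{k_d-1}$, to the statement that $f$ annihilates every element of the form $\big(e_{k_1,\ldots,k_j}\shuffle e_{k_{j+1},\ldots,k_d}\big)-(-1)^{k_{j+1}+\cdots+k_d}e_{k_1,\ldots,k_j,k_d,\ldots,k_{j+1}}$ of weight $k$, and similarly $f=f|p_0$ encodes the reversal relation. (3) Invoke Proposition~\ref{prop:shuffle_linear_smes} and \eqref{eq:reversal}: these say precisely that the span $\mathcal{R}_k^{(d)}$ of all such elements lies in the kernel of $\mathsf{G}^{\shuffle,S}$ restricted to weight-$k$ depth-$d$ words. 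Hence $\mathcal{E}_k^{S,(d)}$ is a quotient of $V^{(d),\vee}_{k-d}\big/\mathcal{R}_k^{(d)}$, whose dual space is exactly $\{f\in V^{(d)}_{k-d}\mid f\perp\mathcal{R}_k^{(d)}\}={\rm LSh}^{(d)}_{k-d}$. (4) Conclude $\dim_\Q\mathcal{E}_k^{S,(d)}\le\dim_\Q\big(V^{(d)}_{k-d}/\mathcal{R}_k^{(d)}\big)^{\!\vee}=\dim_\Q{\rm LSh}^{(d)}_{k-d}$.

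The step I expect to be the main obstacle is step (2): verifying that the coefficient extraction from $f|\mathrm{sh}_j'$ really reproduces the shuffle pairing $\langle f, e_{k_1,\ldots,k_j}\shuffle e_{k_{j+1},\ldots,k_d}\rangle$, with the correct $\sharp$/$\flat_j$ bookkeeping, and — the slightly subtle point — that the sign-twisted variable substitution $f\mapsto(-1)^{d-j}f(x_1,\ldots,x_j,-x_d,\ldots,-x_{j+1})$ in the definition of $|p_j$ matches the sign $(-1)^{k_{j+1}+\cdots+k_d}$ and the index reversal $k_{j+1},\ldots,k_d\mapsto k_d,\ldots,k_{j+1}$ appearing in Proposition~\ref{prop:shuffle_linear_smes}. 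Concretely: substituting $x_i\mapsto-x_i$ in a monomial $x_i^{k_i-1}$ yields $(-1)^{k_i-1}$, so reversing the last $d-j$ slots with sign produces $(-1)^{d-j}\prod_{i=j+1}^d(-1)^{k_i-1}=(-1)^{k_{j+1}+\cdots+k_d}$, which is exactly the needed sign; I would spell this out once carefully. The $p_0$-condition is the $j=0$ (global reversal) case and needs the analogous, slightly easier, check against \eqref{eq:reversal}. Once these dictionary lemmas are in place, the inequality is a formal consequence of duality of finite-dimensional $\Q$-vector spaces, so no convergence or analytic input is required beyond what Proposition~\ref{prop:shuffle_linear_smes} already supplies.
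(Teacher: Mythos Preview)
Your steps (1)--(4) outline a correct argument, but the opening paragraph misstates it in a way that is genuinely wrong. You claim that for \emph{any} hypothetical relation $\sum_{\bk} c_\bk G_\bk^{\shuffle,S}=0$ the associated polynomial $\varphi_{\mathbf c}$ lies in ${\rm LSh}^{(d)}_{k-d}$. This cannot be extracted from Proposition~\ref{prop:shuffle_linear_smes}: that proposition only tells you that certain \emph{specific} combinations (those spanning your $\mathcal{R}_k^{(d)}$) lie in $\ker\mathsf{G}^{\shuffle,S}$, not that every element of the kernel satisfies the defining equations of ${\rm LSh}$. Worse, if your claim were true it would yield $\ker\pi\subset {\rm LSh}^{(d)}_{k-d}$, hence $\dim_\Q\mathcal{E}_k^{S,(d)}=\dim V^{(d)}_{k-d}-\dim\ker\pi\ge \dim V^{(d)}_{k-d}-\dim{\rm LSh}^{(d)}_{k-d}$, a bound in the wrong direction. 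So drop that paragraph entirely and keep (1)--(4), which correctly argue: $\mathcal{R}_k^{(d)}\subset\ker\pi$, ${\rm LSh}^{(d)}_{k-d}=(\mathcal{R}_k^{(d)})^{\perp}$ via the coefficient pairing (your step~(2) sign check is fine), whence $\dim\mathcal{E}_k^{S,(d)}\le\dim V^{(d)}_{k-d}/\mathcal{R}_k^{(d)}=\dim{\rm LSh}^{(d)}_{k-d}$.

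For comparison, the paper proves this by the primal version of the same computation, following \cite[Corollary~7]{IKZ06}: one shows directly that the generating polynomial $F_k^{(d)}=\sum_{\bk}G_\bk^{\shuffle,S}x_1^{k_1-1}\cdots x_d^{k_d-1}$ satisfies $F_k^{(d)}\big|\mathrm{sh}_j'=F_k^{(d)}\big|p_j$ and $F_k^{(d)}=F_k^{(d)}\big|p_0$ (this is exactly Proposition~\ref{prop:shuffle_linear_smes} and \eqref{eq:reversal} rewritten via \eqref{eq:shuffle_sharp_flat}), so $F_k^{(d)}\in{\rm LSh}^{(d)}_{k-d}\otimes_\Q\C\llbracket q\rrbracket$. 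Writing $F_k^{(d)}=\sum_{i=1}^{s}\alpha_i f_i$ for a basis $\{f_i\}$ of ${\rm LSh}^{(d)}_{k-d}$ and extracting monomial coefficients shows every $G_\bk^{\shuffle,S}$ lies in $\sum_i\Q\alpha_i$. This avoids setting up the annihilator pairing explicitly and makes the roles of the operators $\mathrm{sh}_j'$ and $p_j$ more transparent; your dual formulation is equivalent but requires slightly more bookkeeping.
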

\begin{proof}
This follows from the same argument with \cite[Corollary~7]{IKZ06}.
For $k\ge d\ge1$, let 
\[ F_k^{(d)} (x_1,\ldots,x_d):= \sum_{\substack{k_1,\ldots,k_d\ge1\\k_1+\cdots+k_d=k}} G_{k_1,\ldots,k_d}^{\shuffle,S} \, x_1^{k_1-1}\cdots x_d^{k_d-1} \quad  \in \mathbb{C}\llbracket q\rrbracket [x_1,\ldots,x_d] \]
be the generating polynomial of the symmetric multiple Eisenstein series of weight $k$ and depth $d$, and $\{f_1,\ldots,f_s\}$ a basis of ${\rm LSh}_{k-d}^{(d)}$, where $s=\dim_\Q {\rm LSh}_{k-d}^{(d)}$.
It follows from the reversal relation \eqref{eq:reversal} that $F_k^{(d)}=F_k^{(d)}\big|p_0$.
From Proposition \ref{prop:shuffle_linear_smes} for the cases $j=1,2,\ldots,d-1$ and \eqref{eq:shuffle_sharp_flat}, the equalities
\[F_k^{(d)}\big|\mathrm{sh}_j' =F_k^{(d)} \big|p_j \quad (j=1,2,\ldots,d-1)\]
hold for all $k\ge d\ge 1$. Hence $F_k^{(d)}$ is contained in ${\rm LSh}_{k-d}^{(d)}\otimes_\mathbb{Q}\C\llbracket q\rrbracket $.
Using the fixed basis $f_1,\ldots,f_s$ of ${\rm LSh}_{k-d}^{(d)}$, we have
\[ F_k^{(d)}= \alpha_1f_1+\cdots+\alpha_s f_s\]
for some $\alpha_1,\ldots,\alpha_s\in \C\llbracket q\rrbracket$.
Since every $G^{\shuffle,S}_{\bk}$ with $\mathrm{wt}(\bk)=k$ and $\mathrm{dep}(\bk)=d$ is obtained as an appropriate higher derivative of $F^{(d)}_k$ with respect to $x_1,\ldots, x_d$, 
this shows that the space $\mathcal{E}^{S, (d)}_k$ is contained in a $\mathbb{Q}$-subspace of $\mathbb{C}\llbracket q\rrbracket$ spanned by $\alpha_1,\ldots,\alpha_s$, from which the desired result follows.
\end{proof}

This paper does not pursue a systematic study of the space ${\rm LSh}_w^{(d)}$. 
In Table \ref{table:dim_LSh_k}, we provide the list of the dimension of the space ${\rm LSh}_w^{(d)}$ obtained by computer (Wolfram Mathematica).

\begin{table}[H] 
\caption{Dimensions of ${\rm LSh}_w^{(d)}$}
\vspace*{0.8em}
\label{table:dim_LSh_k}
\begin{tabular}{c|ccccccccccccccccccccccccccccccccccc} \hline
$k$ & 1 & 2 & 3 & 4 & 5 & 6 & 7 & 8 & 9 & 10 & 11 & 12 & 13 & 14 & 15 & 16 \\\hline
$\dim {\rm LSh}_{k-1}^{(1)}$ & 0& 1 & 0 & 1 & 0 & 1 & 0 & 1 & 0 & 1 & 0 & 1 & 0 & 1 & 0 & 1 \\[.1em]\hline
$\dim {\rm LSh}_{k-2}^{(2)}$ & & 0 & 1 & 1 & 1 & 2 & 2 & 2 & 3 & 3 & 3 & 4 & 4 & 4 & 5 & 5 \\[.1em]\hline
$\dim {\rm LSh}_{k-3}^{(3)}$ & &  & 0 & 1 & 1 & 3 & 3 & 6 & 6 & 10 &
   10 & 15 & 15 & 21 & 21 & 28 \\[.1em]\hline
 $\dim {\rm LSh}_{k-4}^{(4)}$ & &  & & 0 &  1 & 2 & 4 & 7 & 11 & 17 & 24 & 33 & 44 & 57 & 73 & 91 
\end{tabular}
\end{table}


In the next section, we give the dimension formula and prove the equality in Proposition \ref{prop:bound_E} for $d=2$ when $k$ is odd; see Theorems \ref{thm:main} and \ref{thm:dim_LSh2}.

\section{$\mathfrak{S}_3$-representations, Fay-shuffle space and period polynomials}
We elucidate the linear shuffle space ${\rm LSh}^{(2)}_w$ of depth $2$ in this section.
After preparing several materials from representation theory in Section \ref{ssc:representation}, we give an alternative definition of ${\rm LSh}^{(2)}_w$ via $\mathfrak{S}_3$-representations, and compute its dimension in Section \ref{ssc:dim_LSh}.
As a by-product of this representation theoretical setup, we see that ${\rm LSh}^{(2)}_w$ is dual to the polynomial part of the {\em Fay-shuffle space} ${\rm FSh}^{\mathrm{pol}}_w$ when $w$ is odd (Theorem \ref{thm:lin_sh_vs_fay}). 
We also give a proof of Theorem \ref{thm:main2} in Section \ref{ssc:per_Fay}, which provides more direct connections with odd period polynomials and the Fay-shuffle space.

\subsection{Representation theoretical setups} \label{ssc:representation}

In what follows, we write 
\[V_w=\bigoplus_{r=0}^w \Q X^r Y^{w-r}\]
for the space of homogeneous polynomials of degree $w$ in two indeterminants $X$ and $Y$, instead of $V^{(2)}_w$ by abbreviation. By definition, a polynomial $P(X,Y)\in V_w$ lies in ${\rm LSh}^{(2)}_w$ if and only if it satisfies the following two equalities:
\begin{equation}\label{eq:LSh2}
\begin{aligned}
& P(X,X+Y)+P(Y,X+Y)=-P(X,-Y),\\
& P(X,Y)=P(-Y,-X).
\end{aligned}
\end{equation}

We first rewrite \eqref{eq:LSh2} in terms of the right group action of ${\rm GL}_2(\Z)$ on the polynomial space $V_w$, which is defined by
\begin{align*}
 (P\big|A)(X,Y)=P\bigl((X,Y) \, {}^t\! A\bigr)=P(aX+bY,cX+dY)
\end{align*}
for $A=\big(\begin{smallmatrix}a&b\\c&d\end{smallmatrix}\big)\in {\rm GL}_2(\Z)$ and $P(X,Y)\in V_w$. It is extended to the action of the group ring $\Q[{\rm GL}_2(\Z)]$ by linearity. Now consider the following three elements of $ {\rm GL}_2(\Z)$:
\[ \varepsilon=\begin{pmatrix}0&1\\1&0\end{pmatrix},\quad \gamma=\begin{pmatrix}0&-1\\1&-1\end{pmatrix},\quad \delta=\begin{pmatrix}1&0\\0&-1\end{pmatrix}.\]
Then one readily checks that $\varepsilon^2=\gamma^3=\delta^2=1$ holds, where the identity matrix in ${\rm GL}_2(\Z)$ is denoted by $1$ for simplicity.
Furthermore  $\varepsilon\gamma\varepsilon=\gamma^2=\gamma^{-1}$ holds, and thus the subgroup $G:=\langle \varepsilon ,\gamma\rangle $ generated by $\varepsilon$ and $\gamma$ is isomorphic to the symmetric group $\mathfrak{S}_3$ of degree 3.
We also let
\[ \varepsilon'=\delta\varepsilon\delta=\begin{pmatrix}0&-1\\-1&0\end{pmatrix},\quad \gamma'=\delta\gamma\delta=\begin{pmatrix}0&1\\-1&-1\end{pmatrix}.\]
Then the subgroup $G':=\langle \varepsilon',\gamma'\rangle$ generated by $\varepsilon'$ and $\gamma'$ is the conjugate of $G$ with respect to $\delta$, which is also isomorphic to $\mathfrak{S}_3$.

\begin{lemma}\label{lem:LSh_2_S_3}
For a positive integer $w$, we have
\[{\rm LSh}^{(2)}_w= \{P\in V_w\mid P\big|(1-\varepsilon')=P\big|(1+\gamma+\gamma^2)=0\}.\]
\end{lemma}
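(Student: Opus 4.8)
The plan is to translate the two defining functional equations \eqref{eq:LSh2} directly into the slash-action language and recognize each as the vanishing of $P$ under a specific element of $\Q[{\rm GL}_2(\Z)]$. First I would handle the reversal condition $P(X,Y)=P(-Y,-X)$: the substitution $(X,Y)\mapsto(-Y,-X)$ is exactly the action of the matrix $\bigl(\begin{smallmatrix}0&-1\\-1&0\end{smallmatrix}\bigr)=\varepsilon'$, so this condition reads $P\big|\varepsilon'=P$, i.e. $P\big|(1-\varepsilon')=0$. That half is essentially immediate once the matrix $\varepsilon'$ is written down and one checks the sign conventions in the definition of the right action $(P\big|A)(X,Y)=P(aX+bY,cX+dY)$.

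The shuffle condition $P(X,X+Y)+P(Y,X+Y)=-P(X,-Y)$ is the substantive one. Here I would identify the three substitutions appearing: $(X,Y)\mapsto(X,X+Y)$ corresponds to $\bigl(\begin{smallmatrix}1&0\\1&1\end{smallmatrix}\bigr)$, $(X,Y)\mapsto(Y,X+Y)$ corresponds to $\bigl(\begin{smallmatrix}0&1\\1&1\end{smallmatrix}\bigr)$, and $(X,Y)\mapsto(X,-Y)$ corresponds to $\delta=\bigl(\begin{smallmatrix}1&0\\0&-1\end{smallmatrix}\bigr)$. So the equation becomes $P\big|\bigl(\begin{smallmatrix}1&0\\1&1\end{smallmatrix}\bigr)+P\big|\bigl(\begin{smallmatrix}0&1\\1&1\end{smallmatrix}\bigr)+P\big|\delta=0$. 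The goal is to show this is equivalent, \emph{under the already-established reversal relation} $P\big|\varepsilon'=P$, to $P\big|(1+\gamma+\gamma^2)=0$. I would do this by right-multiplying the element $\bigl(\begin{smallmatrix}1&0\\1&1\end{smallmatrix}\bigr)+\bigl(\begin{smallmatrix}0&1\\1&1\end{smallmatrix}\bigr)+\delta$ of the group ring by a suitable unit (or using that $P\big|A=P\big|B$ whenever $A=\varepsilon' B$, since $P$ is $\varepsilon'$-invariant) to convert each of the three matrices into $1,\gamma,\gamma^2$ up to the $\varepsilon'$-symmetry. Concretely one checks identities such as $\varepsilon'\bigl(\begin{smallmatrix}0&1\\1&1\end{smallmatrix}\bigr)=\gamma$ or $\gamma^{\pm1}$ and $\varepsilon'\delta$ or $\bigl(\begin{smallmatrix}1&0\\1&1\end{smallmatrix}\bigr)$ relates to $\gamma^2$; these are finite $2\times2$ matrix computations over $\Z$.

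The one point requiring genuine care — and the step I expect to be the main obstacle — is verifying that the passage between the two forms of the shuffle equation really only needs the reversal relation and not some extra input, and getting all the signs right: the factor $(-1)$ on the right-hand side of the first line of \eqref{eq:LSh2}, the sign in $p_j$ and $\mathrm{sh}_j'$ that produced \eqref{eq:LSh2} in the first place, and the signs hidden in $\gamma^2=\gamma^{-1}=\bigl(\begin{smallmatrix}-1&1\\-1&0\end{smallmatrix}\bigr)$ when acting on $V_w$. I would therefore organize the argument as: (i) show $P\in{\rm LSh}^{(2)}_w \Leftrightarrow P\big|(1-\varepsilon')=0$ and $P\big|\bigl(\bigl(\begin{smallmatrix}1&0\\1&1\end{smallmatrix}\bigr)+\bigl(\begin{smallmatrix}0&1\\1&1\end{smallmatrix}\bigr)+\delta\bigr)=0$; (ii) assuming $P\big|\varepsilon'=P$, rewrite $\bigl(\begin{smallmatrix}1&0\\1&1\end{smallmatrix}\bigr)+\bigl(\begin{smallmatrix}0&1\\1&1\end{smallmatrix}\bigr)+\delta$ modulo the left ideal of $\Q[{\rm GL}_2(\Z)]$ annihilating $P$ (equivalently, replace each matrix $A$ by $\varepsilon' A$ at will) to reach $1+\gamma+\gamma^2$; (iii) conclude. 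Steps (i) and (iii) are bookkeeping; step (ii) is a short but sign-sensitive computation in ${\rm GL}_2(\Z)$, and that is where I would be most careful.
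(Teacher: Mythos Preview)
Your proposal is correct and follows essentially the same route as the paper. The paper carries out exactly your step (ii) by observing $\gamma\delta=\bigl(\begin{smallmatrix}0&1\\1&1\end{smallmatrix}\bigr)$ and $\varepsilon'\gamma^2\delta=\bigl(\begin{smallmatrix}1&0\\1&1\end{smallmatrix}\bigr)$, then using $P\big|\varepsilon'=P$ to replace $\varepsilon'\gamma^2\delta$ by $\gamma^2\delta$ and cancelling the common right factor $\delta$; this is precisely your ``right-multiply by a unit and use $\varepsilon'$-invariance'' plan, so the sign-sensitive computation you anticipate is the two matrix identities just stated.
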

\begin{proof}
Observe that $\gamma\delta= \big(\begin{smallmatrix} 0&1\\1&1\end{smallmatrix}\big)$ and $ \varepsilon'\gamma^2\delta = \big(\begin{smallmatrix} 1&0\\1&1\end{smallmatrix}\big)$ hold.
Therefore, the defining equations \eqref{eq:LSh2} can be rewritten as
\[P\big|(\gamma\delta+\varepsilon'\gamma^2\delta+\delta)=P\big|(1-\varepsilon')=0.\]
For $P\in V_w$ satisfying $P\big|(1-\varepsilon')=0$, the condition $P\big|(\gamma\delta+\varepsilon'\gamma^2\delta+\delta)=0$ is equivalent to $P\big|(1+\gamma+\gamma^2)=0$, because we have $P\big|\varepsilon'\gamma^2\delta=P\big| \gamma^2\delta$ due to $P\big|\varepsilon'=P$.
This completes the proof.
\end{proof}

As explained at \cite[Section 3]{GKZ}, there exists a standard ${\rm GL}_2(\Z)$-equivariant non-degenerate pairing $\langle\, ,\,\rangle\colon V_w\otimes_\mathbb{Q} V_w\rightarrow \Q(\det^w)$  defined by
\[ \langle X^{a}Y^{w-a},X^{b}Y^{w-b}\rangle = (-1)^{a+1}\binom{w}{b}^{-1} \delta_{a,w-b}\] 
for $0\leq a,b\leq w$, where $\delta_{a,w-b}$ denotes the Kronecker delta. Here $\mathbb{Q}(\det^w)$ denotes $\mathbb{Q}$ on which $A\in {\rm GL}_2(\Z)$ acts via multiplication of $(\det A)^w$, and  the $\mathrm{GL}_2(\Z)$-equivariance stands for the equality $ \langle P\big|A,Q\big|A\rangle = (\det A)^w \langle P,Q\rangle$ for $A\in {\rm GL}_2(\Z)$ and $P,Q\in V_w$.
Now let us define another pairing $(\,,\,)\colon V_w\otimes_\mathbb{Q}V_w\rightarrow \mathbb{Q}$ by setting
\begin{equation}\label{eq:def_pairing}
 (P,Q):= \langle P\big|\delta, Q\rangle
 \end{equation}
for $P,Q\in V_w$. Then, for any $g\in G$ and $P,Q\in V_w$, we have
\begin{equation}\label{eq:(,)} 
\big(P\big|g,Q\big|g'\big)=(\det g)^w \big(P,Q\big)
\end{equation}
where $g'\in G'$ is defined as $g'=\delta g\delta$ as before. 

Recall that $\mathfrak{S}_3$ is the symmetric group of degree $3$, which is regarded as the permutation group of three letters $1,2,3$. It is known to be generated by a transposition $(1\; 2)$ of $1$ and $2$ and a cyclic permutation $(1\; 2\; 3)$ of $1$, $2$ and $3$. Note that, since $G=\langle \varepsilon,\gamma\rangle$ and $G'=\langle \varepsilon',\gamma'\rangle$ are subgroups of $\mathrm{GL}_2(\Z)$ isomorphic to $\mathfrak{S}_3$, we may regard $V_w$ as a right $\mathfrak{S}_3$-module in two different ways. 

\begin{convention} \label{conv:V}
We let $V_w$ denote a right $\mathfrak{S}_3$-module $V_w$ on which $\mathfrak{S}_3$ acts via 
\begin{align*}
 \mathfrak{S}_3 \hookrightarrow \mathrm{GL}_2(\Z); \, (1\;2) \mapsto \varepsilon, (1\;2\;3) \mapsto \gamma.
\end{align*}
Meanwhile, we let $V_w'$ denote a right $\mathfrak{S}_3$-module  $V_w$ on which $\mathfrak{S}_3$ acts via 
\begin{align*}
 \mathfrak{S}_3  \hookrightarrow  \mathrm{GL}_2(\Z); \, (1\;2) \mapsto \varepsilon', (1\;2\;3) \mapsto \gamma'.
\end{align*}
For $\tau\in \mathfrak{S}_3$, we respectively write $g_\tau\in G$ and $g_\tau'\in G'$ for the corresponding element via the former and the latter one. For example, we have $\varepsilon'=g_{(1\;2)}'$. 
\end{convention}

Under Convention~\ref{conv:V}, the equality \eqref{eq:(,)} is interpreted as the $\mathfrak{S}_3$-equivariance of the pairing $(\,,\,)\colon V_w\otimes_{\mathbb{Q}}V_w'\rightarrow \mathbb{Q}(\mathrm{sgn}^w)$, where $\mathbb{Q}(\mathrm{sgn}^w)$ denotes $\mathbb{Q}$ on which $\sigma\in \mathfrak{S}_3$ acts via $(\mathrm{sgn}\, \sigma)^w$ with the signature function $\mathrm{sgn}\colon \mathfrak{S}_3\rightarrow\{\pm 1\}$.

\subsection{Dimension of the linear shuffle space ${\rm LSh}^{(2)}_w$} \label{ssc:dim_LSh}

We compute the dimension of ${\rm LSh}_w^{(2)}$ by relating this with the subspace $W_w$ of $V_w' \ (=V_w)$ defined as
\begin{equation}\label{eq:W_w}
W_w:=\{Q\in V_w'\mid Q\big|(1-\varepsilon')=Q\big|(1+\gamma'+{\gamma'}^2)=0\}.
\end{equation}
The space $W_w$ was introduced in the proof of \cite[Lemma 3.13]{Matthes} in order to compute the dimension of the space of {\em elliptic double zeta values}.
Its dimension for $w\geq 0$ is given by
\begin{equation}\label{eq:dim_W_w}
\dim_{\mathbb{Q}} W_w = \left\lfloor\frac{w+2}{3}\right\rfloor.
\end{equation}
The proof of \eqref{eq:dim_W_w} is based on the $\mathfrak{S}_3$-representation theory; see \cite[Proposition 3.14]{Matthes}.
We will also use the $\mathfrak{S}_3$-representation theory to prove the following result.

\begin{theorem}\label{thm:dim_LSh2}
Let $w$ be a non-negative integer.
If $w$ is even, the $\mathbb{Q}$-linear map
\[ {\rm LSh}_w^{(2)}\longrightarrow W_w; \quad P\longmapsto P\big|\delta\]
is well-defined and bijective.
If $w$ is odd, the pairing $\eqref{eq:def_pairing}$ induces an isomorphism 
\[{\rm LSh}^{(2)}_w\cong {\rm Hom}_{\mathbb{Q}}(W_w,\Q)\]
of $\Q$-vector spaces, where ${\rm Hom}_{\mathbb{Q}}(W_w,\Q)$ is the space of $\Q$-linear maps from $W_w$ to $\Q$. 
In both cases, the dimension of ${\rm LSh}^{(2)}_w$ is determined as 
\[\dim_{\Q} {\rm LSh}^{(2)}_w=\left\lfloor\frac{w+2}{3}\right\rfloor.\]
\end{theorem}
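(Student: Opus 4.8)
The plan is to work with the two $\mathfrak{S}_3$-module structures on $V_w$ from Convention~\ref{conv:V} and to exploit the single fact that $\varepsilon'=-\varepsilon$ in $\mathrm{GL}_2(\Z)$, so that on $V_w$ one has $P\big|\varepsilon'=(-1)^wP\big|\varepsilon$; this sign is what separates the even and odd cases. First I would record the standard facts about $\mathfrak{S}_3$: its irreducibles $\mathbf 1$, $\mathrm{sgn}$, $\mathrm{std}$; that a transposition acts with eigenvalues $+1,-1$ on $\mathrm{std}$; that a $3$-cycle acts trivially on $\mathbf 1$ and $\mathrm{sgn}$ and without nonzero fixed vectors on $\mathrm{std}$; and the tensor identities $\mathbf 1\otimes\mathrm{sgn}\cong\mathrm{sgn}$, $\mathrm{sgn}\otimes\mathrm{sgn}\cong\mathbf 1$, $\mathrm{std}\otimes\mathrm{sgn}\cong\mathrm{std}$. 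Since $\gamma^3=1$, the operator $1+\gamma+\gamma^2$ acts on $V_w$ as multiplication by $3$ on the $\mathbf 1$- and $\mathrm{sgn}$-isotypic parts and as $0$ on the $\mathrm{std}$-isotypic part; hence $\{P\in V_w\mid P\big|(1+\gamma+\gamma^2)=0\}$ is exactly the $\mathrm{std}$-isotypic component $V_w^{\mathrm{std}}$ of $V_w$ as a right $G$-module, and likewise $\{Q\in V_w'\mid Q\big|(1+\gamma'+{\gamma'}^2)=0\}=(V_w')^{\mathrm{std}}$. Combining this with Lemma~\ref{lem:LSh_2_S_3} and the sign $(-1)^w$, I would identify ${\rm LSh}^{(2)}_w$ with the $(-1)^w$-eigenspace of $\varepsilon$ inside $V_w^{\mathrm{std}}$ and $W_w$ with the $(+1)$-eigenspace of $\varepsilon'$ inside $(V_w')^{\mathrm{std}}$; each has dimension equal to the multiplicity of $\mathrm{std}$ in the corresponding module.

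Next I would check that $\phi\colon V_w\to V_w'$, $\phi(P)=P\big|\delta$, is an isomorphism of $\mathfrak{S}_3$-modules, which follows from $\delta\varepsilon'=\varepsilon\delta$ and $\delta\gamma'=\gamma\delta$ (equivalently $g_\sigma\delta=\delta g'_\sigma$ for all $\sigma$). In particular the multiplicity of $\mathrm{std}$ agrees for $V_w$ and $V_w'$, so $\dim_\Q{\rm LSh}^{(2)}_w=\dim_\Q W_w$, and together with the cited formula \eqref{eq:dim_W_w} this already yields $\dim_\Q{\rm LSh}^{(2)}_w=\lfloor(w+2)/3\rfloor$ for every $w\ge0$. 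For $w$ even, $P\big|\varepsilon'=P$ is equivalent to $P\big|\varepsilon=P$, so ${\rm LSh}^{(2)}_w$ is the $\varepsilon$-fixed part of $V_w^{\mathrm{std}}$; since $\phi$ is $\mathfrak{S}_3$-equivariant it carries the $\varepsilon$-fixed part of $V_w^{\mathrm{std}}$ bijectively onto the $\varepsilon'$-fixed part of $(V_w')^{\mathrm{std}}$, that is $\phi({\rm LSh}^{(2)}_w)=W_w$, and injectivity is automatic. This settles the even case.

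For $w$ odd, $\phi$ sends ${\rm LSh}^{(2)}_w$ (now the $\varepsilon=-1$ part of $V_w^{\mathrm{std}}$) to the $\varepsilon'=-1$ part of $(V_w')^{\mathrm{std}}$, which is complementary to $W_w$, so $\phi$ is the wrong map and one uses the pairing. I would argue that $(\,,\,)\colon V_w\otimes_\Q V_w'\to\Q(\mathrm{sgn}^w)$ is non-degenerate (as $\langle\,,\,\rangle$ is and $\delta$ is invertible) and $\mathfrak{S}_3$-equivariant by \eqref{eq:(,)}. Since $w$ is odd the target is $\Q(\mathrm{sgn})$, and by Schur's lemma a nonzero equivariant pairing $A\otimes B\to\mathrm{sgn}$ between irreducibles forces $B\cong A\otimes\mathrm{sgn}$; the tensor identities then show the pairing restricts to a perfect pairing $V_w^{\mathrm{std}}\otimes(V_w')^{\mathrm{std}}\to\Q(\mathrm{sgn})$ (the $\mathbf 1$-part of $V_w$ pairing only with the $\mathrm{sgn}$-part of $V_w'$, and conversely). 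Finally, on the $\mathrm{std}$-components, equivariance under a transposition --- whose signature is $-1$ --- forces $(P,Q)=0$ whenever $P$ and $Q$ lie in $\varepsilon$- and $\varepsilon'$-eigenspaces with the same sign, so the Gram matrix is block anti-diagonal, and non-degeneracy then forces the $\varepsilon=-1$ block to pair perfectly with the $\varepsilon'=+1$ block. Hence $(\,,\,)$ restricts to a perfect pairing ${\rm LSh}^{(2)}_w\otimes_\Q W_w\to\Q$, which gives ${\rm LSh}^{(2)}_w\cong{\rm Hom}_\Q(W_w,\Q)$.

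The computational ingredients --- the $\mathfrak{S}_3$ eigenvalue bookkeeping and the verification that $\phi$ is equivariant --- are routine. The only point requiring genuine care is keeping the two module structures $V_w$ and $V_w'$ (and the matrices $\varepsilon,\gamma$ versus $\varepsilon',\gamma'$) apart while correctly tracking the sign $(-1)^w$ coming from $\varepsilon'=-\varepsilon$, which is precisely the mechanism behind the different statements for $w$ even and $w$ odd; I do not expect a serious obstacle beyond this.
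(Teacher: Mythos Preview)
Your argument is correct and follows essentially the same route as the paper: both identify ${\rm LSh}_w^{(2)}$ and $W_w$ as suitable $\varepsilon$- (resp.\ $\varepsilon'$-) eigenspaces inside the $\mathrm{std}$-isotypic component, use the map $P\mapsto P\big|\delta$ for $w$ even, and for $w$ odd exploit the $\mathfrak{S}_3$-equivariance of the pairing \eqref{eq:def_pairing} together with the tensor identity $\mathrm{std}\otimes\mathrm{sgn}\cong\mathrm{std}$ to obtain the duality. Your observation that the $\mathfrak{S}_3$-equivariance of $\phi\colon P\mapsto P\big|\delta$ already forces $\dim_\Q{\rm LSh}_w^{(2)}=\dim_\Q W_w$ for all $w$ (via equality of the multiplicity of $\mathrm{std}$) is a mild streamlining over the paper, which derives the dimension separately from the two isomorphisms.
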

\begin{proof}
First, assume that $w$ is even. Then we have $P\big|\varepsilon=P\big|\varepsilon'$ for every $P\in V_w$.
Now take $P\in {\rm LSh}^{(2)}_w$ and put $Q=P\big|\delta$.
Then $Q\big|(1+\gamma'+{\gamma'}^2)=P\big|(1+\gamma+\gamma^2)\delta=0$, which implies $Q=P\big|\delta \in W_w$. The inverse map is obviously given by $Q\mapsto Q\big|\delta$, and thus the map under consideration is bijective as desired.

\

Now suppose that $w$ is odd.
We use the representation theory of $\mathfrak{S}_3$ together with the pairing $(\,,\,) \colon V_w\otimes V_w'\rightarrow \Q(\mathrm{sgn}^w)$ defined in \eqref{eq:def_pairing}, which is $\mathfrak{S}_3$-equivariant under Convention~\ref{conv:V}.  Since it is non-degenerate, gives rise to the $\mathfrak{S}_3$-equivariant isomorphism
\[ \psi\colon V_w\longrightarrow {\rm Hom}_{\mathbb{Q}}(V_w',\Q(\mathrm{sgn}^w));\quad P\longmapsto f_P, \]
where the $\mathbb{Q}$-linear map $f_P\in {\rm Hom}_{\mathbb{Q}}(V_w',\Q(\mathrm{sgn}^w))$ is defined by $f_P(Q)=(P,Q)$ for $Q\in V_w'$.
Namely, we have
\[ V_w^{\prime \, \vee}(\mathrm{sgn}^w):={\rm Hom}_{\mathbb{Q}}(V_w',\Q(\mathrm{sgn}^w))=\{f_P :V_w'\rightarrow \Q(\mathrm{sgn}^w)\mid P\in V_w\}.\] 
Recall that the contragredient $\mathfrak{S}_3$-action on $V_w^{\prime\, \vee}(\mathrm{sgn}^w)$ is defined by 
\[ \big(\tau f\big) (Q):=(\mathrm{sgn} \,\tau)^w f(Q\big|(g_\tau')^{-1}) \]
for $Q\in V'_w$, $\tau \in \mathfrak{S}_3$ and $f\in V_w^{\prime\, \vee}(\mathrm{sgn}^w)$.
The $\mathfrak{S}_3$-equivariance of $\psi$ is a standard fact of representation theory, which can be readily checked as 
\begin{align*}
 \psi(P\big|g_\tau)(Q)&=(P\big|g_\tau,Q) =(\mathrm{sgn}\, \tau)^w (P,Q\big|(g_\tau')^{-1})=(\mathrm{sgn}\, \tau)^w \psi(P)(Q\big|(g_\tau')^{-1})
\end{align*}
for $\tau \in \mathfrak{S}_3$, $P\in V_w$ and $Q\in V_w'$; here the second equality follows from \eqref{eq:(,)}.  

For $\mathbb{Q}$-representations $\varrho$ and $V$ of $\mathfrak{S}_3$ with $\varrho$ irreducible, we write $V[\varrho]$ for the isotypic component of $V$ of type $\varrho$. One readily checks that the isotypic component satisfies $V^\vee[\varrho]\cong V[\varrho]^\vee$ and $(V\otimes \chi)[\varrho]\cong V[\varrho\otimes \chi^{-1}]\otimes \chi$, where the superscript $\vee$ denotes the contragredient representation and $\chi$ is any $1$-dimensional representation of $\mathfrak{S}_3$. Here we use the fact that any irreducible representation of $\mathfrak S_3$ is conjugate self-dual. Now take an irreducible $2$-dimensional $\mathbb{Q}$-representation $\rho$ of $\mathfrak{S}_3$. The map $\psi$ induces an isomorphism of the isotypic components of type $\rho$, and combining with the isomorphisms above, we have
\[V_w[\rho] \cong  V_w^{'\, \vee}(\mathrm{sgn}^w)[\rho]\cong V_w^{'\, \vee}[\rho \otimes \mathrm{sgn}^{-w}](\mathrm{sgn}^w)\cong  V_w'[\rho\otimes \mathrm{sgn}^{-w}]^\vee(\mathrm{sgn}^w). \]
However, since an irreducible $2$-dimensional $\mathbb{Q}$-representation of $\mathfrak{S}_3$ is known to be unique up to isomorphism, there exists an $\mathfrak{S}_3$-isomorphism $(\rho\otimes \mathrm{sgn}^{-w})^\vee\cong \rho$ and we thus have
\begin{equation*}
V_w[\rho]\cong V_w'[\rho]^{\vee}(\mathrm{sgn}^w)={\rm Hom}_{\mathbb{Q}}(V_w'[\rho],\mathbb{Q}(\mathrm{sgn}^w)).
\end{equation*}
For an $\mathfrak{S}_3$-representation $\mathcal{V}$ with coefficients in $\mathbb{Q}$, let $\mathcal{V}^{\pm}$ denote the eigenspace on which $(1\; 2)$ acts via multiplication by $\pm 1$.
Then, the above isomorphism preserves the $(\pm1)$-eigenspaces of $(1\; 2)\in \mathfrak{S}_3$.
Moreover, since $(1\; 2)$ acts on $\Q(\mathrm{sgn}^w)$ as the scalar $-1$ (recall that $w$ is assumed to be odd), we obtain
\begin{equation}\label{eq:rho}  
V_w[\rho]^\pm \cong V_w'[\rho]^{\mp}(\mathrm{sgn}^w)= {\rm Hom}_{\mathbb{Q}}(V_w'[\rho]^{\mp},\Q(\mathrm{sgn}^w)).
\end{equation}

By Maschke's theorem, there exists a unique $\mathfrak{S}_3$-subrepresentation $U_w$ of $V_w$ satisfying $V_w=U_w\oplus V_w[\rho]$. Due to the classification of irreducible representations of $\mathfrak{S}_3$, one observes that $U_w$ is the maximal $\mathbb{Q}[\mathfrak{S}_3]$-submodule on which $(1\;2\;3)\in \mathfrak{S}_3$ acts trivially. Since $0=(1\;2\;3)^3-1=\bigl((1\;2\;3)-1\bigr)\bigl((1\;2\;3)^2+(1\;2\;3)+1\bigr)$ holds in $\mathbb{Q}[\mathfrak{S}_3]$, we observe that the orthogonal compliment $V_w[\rho]$ of $U_w$ is then characterised as the maximal $\mathbb{Q}[\mathfrak{S}_3]$-submodule of $V_w$ annihilated by $(1\;2\;3)^2+(1\;2\;3)+1$, which implies
\begin{align*}
V_w[\rho]&=\{P\in V_w\mid P\big|(1+\gamma+\gamma^2)=0\}.
 \end{align*}
By the same argument on the decomposition $V_w'=U_w'\oplus V_w'[\rho]$, we also obtain
\begin{align*}
 V_w'[\rho]&=\{Q\in V_w'\mid Q\big|(1+\gamma'+{\gamma'}^2)=0\}.
\end{align*}
From Lemma \ref{lem:LSh_2_S_3} and the definition of $W_w$, we see that ${\rm LSh}^{(2)}_w=V_w[\rho]^-$ and $W_w=V_w'[\rho]^+$ hold. 
The desired isomorphism $\mathrm{LSh}^{(2)}_w\cong \mathrm{Hom}_\mathbb{Q}(W_w,\mathbb{Q})$ now directly follows from \eqref{eq:rho} combined with these identifications.

Since we have verified that $\mathrm{LSh}^{(2)}_w$ is isomorphic to $W_w$ as $\Q$-vector spaces, the desired formula for $\dim_{\mathbb{Q}}\mathrm{LSh}^{(2)}_w$ immediately follows from the dimension formula for $W_w$ given in \eqref{eq:dim_W_w}.
\end{proof}

\subsection{Relation with odd period polynomials and the Fay-shuffle space} \label{ssc:per_Fay}

We now relate the linear shuffle space ${\rm LSh}_w^{(2)}$ to the space $W_w^{\rm od}$ of odd period polynomials and to the polynomial subspace ${\rm FSh}_w^\mathrm{pol}$ of the Fay-shuffle space due to Matthes \cite{Matthes}.
Let us briefly recall the definition and the context of these two spaces.

For an integer $w\ge0$, let 
\[W_w^{\rm od}:=\{P\in V_w\mid P(X,Y)-P(X+Y,Y)-P(X+Y,X)=0\}\]
be the space of odd period polynomials of degree $w$.
This notation is conventional, and while there may be a concern about overlap with $W_w$ defined in \eqref{eq:W_w}, it is unlikely to cause any confusion.
When $w$ is even, the defining equation of odd period polynomials is originated from $\Q$-linear relations among critical values of $L$-functions of a cusp form.
Moreover, due to the Eichler--Shimura--Manin theory (see \cite{GKZ} for example), the space $W_w^{\rm od}\otimes \C$ is isomorphic to the $\C$-vector space $S_{w+2}({\rm SL}_2(\Z))$ of cusp forms of weight $w+2$ for ${\rm SL}_2(\Z)$.
From this, we have
\[ \dim_{\mathbb{Q}} W_w^{\rm od} = \dim_{\mathbb{C}} S_{w+2}({\rm SL}_2(\Z)) = \left\lfloor \frac{w-2}{4}\right\rfloor - \left\lfloor \frac{w}{6}\right\rfloor. \]
Note that $W_w^{\rm od}=0$ holds for $w$ odd.

For an integer $w\ge0$, the Fay-shuffle space ${\rm FSh}_w$, introduced by Matthes (denoted by ${\rm FSh}_2(w)$ in \cite[Definition 3.8]{Matthes}), consists of Laurent polynomials $Q\in V_{w}+\Q \frac{Y^{w+1}}{X}+\Q \frac{X^{w+1}}{Y}$ satisfying 
\[ Q(X,Y)+Q(Y,X)=Q(X,Y)+Q(X+Y,-Y)+Q(-X-Y,X)=0.\]
The space ${\rm FSh}_w$ is trivial when $w$ is even.
For $w$ odd, the defining equations of the space ${\rm FSh}_w$ arise from the \emph{Fay-shuffle relation} and the shuffle relation modulo products satisfied by elliptic double zeta values \cite[Proposition 2.5]{Matthes}.
Note that, as shown in \cite[Proposition 3.12]{Matthes}, the Fay-Shuffle space is decomposed into
\[ {\rm FSh}_w={\rm FSh}_w^\mathrm{pol} \oplus \Q\left(\frac{X^{w+1}}{Y}-\frac{Y^{w+1}}{X}-\frac{X^{w+1}-Y^{w+1}}{X+Y}\right),\] 
where $ {\rm FSh}_w^\mathrm{pol} ={\rm FSh}_w\cap V_w$ denotes the polynomial part of ${\rm FSh}_w$.
As pointed out in the proof of \cite[Lemma 3.13]{Matthes}, the space $ {\rm FSh}_w^\mathrm{pol}$ coincides with $W_w$ defined in \eqref{eq:W_w}: ${\rm FSh}_w^\mathrm{pol} = W_w$.
Thus, it follows from Theorem \ref{thm:dim_LSh2} that 
\[{\rm LSh}_w^{(2)}\cong {\rm Hom}_\Q ({\rm FSh}_w^\mathrm{pol},\Q).\]

We now give a precise formulation of Theorem \ref{thm:main2} and provide its proof.

\begin{theorem}\label{thm:lin_sh_vs_fay}
\begin{enumerate}[leftmargin=25pt,label={\rm (\roman*)}]
 \item For $w\ge0$ even, the space of odd period polynomials $W_w^{\rm od}$ is contained in ${\rm LSh}_w^{(2)}$.
\item For $w\ge 1$ odd, the $\Q$-linear map 
\[ {\rm FSh}_w^\mathrm{pol} \longrightarrow {\rm LSh}_w^{(2)};\quad Q\longmapsto Q\big|\gamma' (1-\varepsilon')\delta \]
is a well-defined bijection, whose inverse is given by
\[ {\rm LSh}_w^{(2)} \longrightarrow {\rm FSh}_w^\mathrm{pol};\quad P\longmapsto -\frac13 P\big|\gamma (1+\varepsilon)\delta. \] 
\end{enumerate}
\end{theorem}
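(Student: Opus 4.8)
The plan is to prove both statements by explicit computation in the group ring $\mathbb{Q}[\mathrm{GL}_2(\mathbb{Z})]$ acting on $V_w$ and $V_w'$, using the characterisation $\mathrm{LSh}^{(2)}_w=\{P\in V_w\mid P\big|(1-\varepsilon')=P\big|(1+\gamma+\gamma^2)=0\}$ of Lemma~\ref{lem:LSh_2_S_3}, the description $\mathrm{FSh}^{\mathrm{pol}}_w=W_w=\{Q\in V_w'\mid Q\big|(1-\varepsilon')=Q\big|(1+\gamma'+{\gamma'}^2)=0\}$, and the elementary relations $\varepsilon^2=\gamma^3=1$, $\varepsilon\gamma\varepsilon=\gamma^{-1}$, $\delta g\delta=g'$ for $g\in G$, and $\varepsilon\varepsilon'=-1$. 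The last relation is what separates the two cases: on $V_w$ the operators $P\mapsto P\big|\varepsilon'$ and $P\mapsto P\big|\varepsilon$ differ by the sign $(-1)^w$. I will also use the identities $(\gamma-\gamma^2)(1+\gamma+\gamma^2)=0$ and $(\gamma-\gamma^2)^2=(1+\gamma+\gamma^2)-3$ in $\mathbb{Q}[\langle\gamma\rangle]$.

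Part (ii), the computational core, I would do first. Take $Q\in\mathrm{FSh}^{\mathrm{pol}}_w=W_w$, so $Q\big|\varepsilon'=Q$ and $Q\big|(1+\gamma'+{\gamma'}^2)=0$. From $\gamma'\varepsilon'=\varepsilon'{\gamma'}^2$ one gets $Q\big|\gamma'(1-\varepsilon')=Q\big|(\gamma'-{\gamma'}^2)$, and then, moving $\delta$ across via $\delta g\delta=g'$, one finds $P:=Q\big|\gamma'(1-\varepsilon')\delta=R\big|(\gamma-\gamma^2)$ with $R:=Q\big|\delta$, where moreover $R\big|\varepsilon=R$ and $R\big|(1+\gamma+\gamma^2)=0$ (again by $\delta$-conjugating the defining relations of $W_w$). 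One then verifies $P\in\mathrm{LSh}^{(2)}_w$: indeed $P\big|(1+\gamma+\gamma^2)=R\big|(\gamma-\gamma^2)(1+\gamma+\gamma^2)=0$, while, since $w$ is odd, $P\big|(1-\varepsilon')=P+P\big|\varepsilon=R\big|(\gamma-\gamma^2)+(R\big|\varepsilon)\big|(\gamma^2-\gamma)=0$, using $(\gamma-\gamma^2)\varepsilon=\varepsilon(\gamma^2-\gamma)$ and $R\big|\varepsilon=R$. Running the same computation backwards shows that $P\mapsto-\tfrac13 P\big|\gamma(1+\varepsilon)\delta$ sends $\mathrm{LSh}^{(2)}_w$ into $\mathrm{FSh}^{\mathrm{pol}}_w$ (now $P\big|\varepsilon'=P$ forces $P\big|\varepsilon=-P$, and $P\big|\gamma(1+\varepsilon)=P\big|(\gamma-\gamma^2)$). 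Finally, composing the two maps, the factor $(\gamma-\gamma^2)^2=(1+\gamma+\gamma^2)-3$ acts as multiplication by $-3$ on the subspaces where $\big|(1+\gamma+\gamma^2)$ vanishes, so the $-\tfrac13$ makes them mutually inverse; bijectivity also follows from $\dim_{\mathbb{Q}}\mathrm{LSh}^{(2)}_w=\dim_{\mathbb{Q}}W_w$ (Theorem~\ref{thm:dim_LSh2}).

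For part (i), let $w$ be even and $P\in W_w^{\mathrm{od}}$. Swapping $X$ and $Y$ in $P(X,Y)=P(X+Y,Y)+P(X+Y,X)$, whose right-hand side is symmetric, gives $P\big|\varepsilon=P$; as $w$ is even this already yields $P\big|(1-\varepsilon')=0$. It remains to show $P\big|(1+\gamma+\gamma^2)=0$, and the plan is to prove that $Q:=P\big|(1+\gamma+\gamma^2)$ is fixed by $\varepsilon$, $\gamma$ and $\delta$. Fixedness under $\varepsilon$ and $\gamma$ is immediate from $P\big|\varepsilon=P$, $\gamma^i\varepsilon=\varepsilon\gamma^{-i}$, and $(1+\gamma+\gamma^2)\gamma=1+\gamma+\gamma^2$; fixedness under $\delta$ is where the defining relation of $W_w^{\mathrm{od}}$ enters, through its reformulation $P=P\big|(\gamma+\gamma^2)\delta$ (obtained from $P\big|\varepsilon=P$ together with the fact that $\varepsilon$ and $\varepsilon'$ act identically on $V_w$ for $w$ even), which reads $P+P\big|\delta=Q\big|\delta$; applying $\big|\delta$ and comparing gives $Q\big|\delta=Q$. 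Since $\big(\begin{smallmatrix}1&1\\0&1\end{smallmatrix}\big)=\varepsilon\gamma\delta$, the element $Q$ is then also fixed by $\big(\begin{smallmatrix}1&1\\0&1\end{smallmatrix}\big)$ and by $\varepsilon$; a homogeneous polynomial of positive degree fixed by $\big(\begin{smallmatrix}1&1\\0&1\end{smallmatrix}\big)$ is a scalar multiple of $Y^w$, which is $\varepsilon$-fixed only when it is $0$, so $Q=0$ (the case $w=0$ being trivial). Hence $P\in\mathrm{LSh}^{(2)}_w$ by Lemma~\ref{lem:LSh_2_S_3}.

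The individual computations are routine; what needs the most attention is the bookkeeping with the two embeddings $G,G'\hookrightarrow\mathrm{GL}_2(\mathbb{Z})$ of $\mathfrak{S}_3$ and the parity signs introduced by $\delta$-conjugation and by $\varepsilon\varepsilon'=-1$. The one genuinely non-formal point is the observation in part (i) that $P\big|(1+\gamma+\gamma^2)$ is a $\mathrm{GL}_2(\mathbb{Z})$-invariant polynomial and therefore vanishes, which is precisely what lets one avoid invoking the Eichler--Shimura machinery directly.
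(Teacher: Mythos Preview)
Your argument is correct. For part~(ii) it follows essentially the same route as the paper: both proofs verify that the proposed maps land in the right target spaces by pushing $\delta$ across and using the relations $\varepsilon'(1+\gamma'+{\gamma'}^2)=(1+\gamma'+{\gamma'}^2)\varepsilon'$ and (for $w$ odd) $P\big|\varepsilon'=-P\big|\varepsilon$, and then check that the two maps are mutually inverse. Your packaging via $R=Q\big|\delta$ and the identity $(\gamma-\gamma^2)^2=(1+\gamma+\gamma^2)-3$ is a little cleaner than the paper's explicit word-by-word computation, but the content is identical.

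For part~(i) the approaches genuinely differ. The paper imports from \cite{GKZ} the two standard properties $P\big|(1+\delta)=0$ and $P\big|(1-\varepsilon)=0$ of odd period polynomials, and then verifies the first defining equation~\eqref{eq:LSh2} of $\mathrm{LSh}_w^{(2)}$ directly in three lines. You instead work only from the defining relation of $W_w^{\mathrm{od}}$: you read off $P\big|\varepsilon=P$ from the symmetry of $P(X+Y,Y)+P(X+Y,X)$, rewrite the relation as $P=P\big|(\gamma+\gamma^2)\delta$ (using homogeneity in even degree), and then argue that $Q:=P\big|(1+\gamma+\gamma^2)$ is fixed by $\varepsilon$, $\gamma$, and $\delta$, hence by $T=\big(\begin{smallmatrix}1&1\\0&1\end{smallmatrix}\big)$, and is therefore zero. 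This is a bit longer but entirely self-contained; as a bonus it recovers the $\delta$-antiinvariance $P\big|(1+\delta)=Q=0$ rather than assuming it. Either argument is perfectly adequate here; the paper's is shorter because it is willing to cite \cite{GKZ}, while yours would be preferable in a context where the reader may not know those facts.
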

\begin{proof}
(i)  
Recall that every odd period polynomial $P\in W_w^{\rm od}$ satisfies $P\big|(1+\delta)=0$ and $P\big|(1-\varepsilon)=0$; see \cite[Section 5]{GKZ}.
Since $w$ is even, we have $P\big|(1-\varepsilon')=0$.
From the assumption $P(X+Y,Y)+P(X+Y,X)\stackrel{(*)}{=}P(X,Y)$, we get
\begin{align*}
P(X,X+Y)+P(Y,X+Y)&=(P\big| \varepsilon)(X,X+Y)+(P\big|\varepsilon)(Y,X+Y)  \\
&=P(X+Y,X)+P(X+Y,Y)\stackrel{(*)}{=}P(X,Y)\\
&=(-P\big|\delta)(X,Y)=-P(X,-Y).
\end{align*}
Thus \eqref{eq:LSh2} implies that $P\in {\rm LSh}_w^{(2)}$.

\noindent (ii) Let $Q\in V_w$. Since $w$ is odd, we have
\begin{align*}
 Q(X,Y)+Q(Y,X)=Q(X,Y)-Q(-Y,-X)=Q(X,Y)-(Q\big| \varepsilon')(X,Y).
\end{align*}
This shows that $Q$ satisfies $Q(X,Y)+Q(Y,X)=0$ if and only if $Q\big|(1-\varepsilon')=0$ holds. We may also check the equalities
\begin{align*}
Q(X+Y,-Y) &=(Q\big| \varepsilon'\gamma')(X,Y)=(Q\big| \gamma')(X,Y),  & Q(-X-Y,X)&=(Q\big|\gamma^{\prime \, 2})(X,Y), 
\end{align*}
and thus $Q$ satisfies $Q(X,Y)+Q(X+Y,-Y)+Q(-X-Y,X)=0$ if and only if it satisfies $Q\big|(1+\gamma'+\gamma^{\prime\, 2})=0$.
Hence, the defining equations of the space ${\rm FSh}_w^\mathrm{pol}$ coincide with those of $W_w$ in \eqref{eq:W_w}; see also \cite[Section 3]{Matthes}.
Now let us assume $Q\in {\rm FSh}_w^\mathrm{pol}$. Since $(1-\varepsilon')(1+\varepsilon')=0$ in $\Q[{\rm GL}_2(\Z)]$, we have $Q\big|\gamma' (1-\varepsilon')\delta (1+\varepsilon)=Q\big|\gamma' (1-\varepsilon') (1+\varepsilon')\delta=0$.
Using $\varepsilon'(1+\gamma'+{\gamma'}^2)=(1+\gamma'+{\gamma'}^2)\varepsilon'$, one computes
\begin{align*}
Q\big|\gamma' (1-\varepsilon')\delta (1+\gamma+\gamma^2)&=Q\big|\gamma' (1-\varepsilon') (1+\gamma'+{\gamma'}^2)\delta\\
&=Q\big|(1+\gamma'+{\gamma'}^2)(1-\varepsilon')\delta=0.
\end{align*}
This implies $Q\big|\gamma' (1-\varepsilon')\delta\in {\rm LSh}_w^{(2)}$.
Similarly we can also check that $P\big|\gamma (1+\varepsilon)\delta\in {\rm FSh}_w^\mathrm{pol}$ for $P\in {\rm LSh}_w^{(2)}$.

One readily checks that the compositions of the former and latter maps are the identity maps.
Indeed, direct computations show that
\begin{align*}
-\frac13 P\big|\gamma (1+\varepsilon)\delta\gamma' (1-\varepsilon')\delta&=-\frac13 P\big| \gamma(1+\varepsilon)\gamma(1-\varepsilon)=-\frac13 P\big|(\gamma^2+\varepsilon)(1-\varepsilon)\\
&=-\frac13 P\big|(\gamma^2-\varepsilon\gamma+\varepsilon-1)=-\frac13 P\big|(\gamma^2+\gamma-2)=P, \\
-\dfrac{1}{3}Q\big|\gamma'(1-\varepsilon')\delta  \gamma (1+\varepsilon)\delta &=-\dfrac{1}{3}Q\big| \gamma'(1-\varepsilon')\gamma'(1+\varepsilon') =-\dfrac{1}{3}Q\big|(\gamma^{\prime\, 2}-\varepsilon')(1+\varepsilon') \\ 
 &=-\dfrac{1}{3}Q\big| (\gamma^{\prime \, 2}+\varepsilon'\gamma'-\varepsilon'-1)=-\dfrac{1}{3}Q\big|(\gamma^{\prime\, 2}+\gamma'-2)=Q
\end{align*}
for $P\in {\rm LSh}_w^{(2)}$ and $Q\in {\rm FSh}_w^\mathrm{pol}$. Therefore, these maps yield mutually inverse isomorphisms between
$\mathrm{FSh}_w^{\mathrm{pol}}$ and $\mathrm{LSh}_w^{(2)}$.
\end{proof}

We here give several examples. For $w$ even, the first example of an odd period polynomial is given by 
\[ s_{12}= 4X^9Y-25X^7Y^3+42X^5Y^5-25X^3Y^7+4XY^9 \in W_{10}^{\rm od}.\]
See \cite[Section 6]{Zagier} for more examples of odd period polynomials. Meanwhile, the following polynomials form a basis of ${\rm LSh}_{10}^{(2)}$:
\begin{align*}
P_1=&X^7 Y^3-4 X^6 Y^4+6 X^5 Y^5-4 X^4 Y^6+X^3 Y^7,\\
P_2=&X^8 Y^2-11 X^6 Y^4+20 X^5 Y^5-11 X^4 Y^6+X^2 Y^8,\\
P_3=&X^9 Y-25 X^6 Y^4+48 X^5 Y^5-25 X^4 Y^6+X Y^9,\\
P_4=&X^{10}-50 X^6 Y^4+96 X^5 Y^5-50 X^4 Y^6+Y^{10}.
\end{align*}
Theorem~\ref{thm:lin_sh_vs_fay} (i) implies that $s_{12}$ can be written as a $\mathbb{Q}$-linear combination of $P_1$, $P_2$, $P_3$ and $P_4$; indeed we have $s_{12}=4P_3-25P_1$.

For comparison, we provide bases of ${\rm FSh}_w^{\mathrm{pol}}$ and ${\rm LSh}_w^{(2)}$ for odd $w$ in Tables \ref{table:FSh_bases} and \ref{table:LSh_bases}, respectively.
\begin{table}[H]
\caption{Bases of $\mathrm{FSh}_w^{\mathrm{pol}}$ for odd $w$}
\label{table:FSh_bases}
\vspace*{.8em}
\begin{tabular}{l|ll} \hline
$w$ & a basis of ${\rm FSh}_w^{\mathrm{pol}}$\\ \hline
$1$ & $X-Y$\\\hline
$3$ & $X^3 - Y^3$\\\hline
$5$ & $X^5 - Y^5,\ X^4 Y + X^3 Y^2 - X^2 Y^3 - X Y^4$\\\hline
$7$ & $X^7 - Y^7,\ X^5 Y^2+X^4 Y^3-X^3 Y^4-X^2 Y^5,\ X^6 Y-X^4 Y^3+X^3 Y^4-X Y^6$\\\hline
\end{tabular}
\end{table}

\begin{table}[H]
\caption{Bases of $\mathrm{LSh}_w^{(2)}$ for odd $w$}
\label{table:LSh_bases}
\vspace*{.8em}
\begin{tabular}{l|ll} \hline
$w$ & a basis of ${\rm LSh}_w^{(2)}$\\ \hline
$1$ & $X-Y$\\\hline
$3$ & $X^3 - 2 X^2 Y + 2 X Y^2 - Y^3$\\\hline
$5$ & $X^4 Y - 5 X^3 Y^2 + 5 X^2 Y^3 - X Y^4,\ X^5 - 10 X^3 Y^2 + 
 10 X^2 Y^3 - Y^5$\\\hline
$7$ & $X^5 Y^2-3 X^4 Y^3+3 X^3 Y^4-X^2 Y^5,\ X^6 Y-7 X^4 Y^3+7 X^3 Y^4-X Y^6$,\\
&$X^7-14 X^4 Y^3+14 X^3 Y^4-Y^7$\\\hline
\end{tabular}
\end{table}

\section{Proof of the main results}
In this section, we prove Theorems  \ref{thm:modular_relations} and \ref{thm:main}. We first complete the proof of Theorem~\ref{thm:main} in Sections \ref{ssc:1.2-i} and \ref{ssc:1.2-ii}, and then finally verify Theorem~\ref{thm:modular_relations} in Section~\ref{ssc:1.1}.

\subsection{Proof of Theorem \ref{thm:main} (i)} \label{ssc:1.2-i}
Let $k\ge4$ be even. 
Note that $(2\pi i)^k \mathcal{DE}_k^S = \mathcal{E}_k^{S,(2)}$ holds, where the $\Q$-vector spaces $\mathcal{DE}_k^S$ and $\mathcal{E}_k^{S,(2)}$ are defined in \eqref{eq:DE_k^S} and \eqref{eq:E_k^S,d}, respectively.
Let $M_k^\Q({\rm SL}_2(\Z))$ denote the $\Q$-vector space spanned by modular forms with rational Fourier coefficients. 
In this subsection, we shall prove 
\[\mathcal{DE}_k^S= M_k^\Q({\rm SL}_2(\Z)) \oplus \Q  \widetilde{G}_{k-2}'(\tau).\]

The following fact is well known in the study of modular forms.

\begin{theorem}\label{thm:two_product} 
For $k\ge4$ even, every modular form of weight $k$ for ${\rm SL}_2(\Z)$ with rational Fourier coefficients is presented as a $\Q$-linear combination of $\widetilde{G}_k(\tau)$ and $ \widetilde{G}_{2j}(\tau)\widetilde{G}_{k-2j}(\tau)$ for $j=2,3,\ldots,\left\lfloor\frac{k}{4}\right\rfloor$.
\end{theorem}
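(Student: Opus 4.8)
The plan is to translate the statement into the classical structure theory of modular forms and argue by induction on the weight. Recall that $\bigoplus_{k}M_k^\Q(\mathrm{SL}_2(\Z))=\Q[E_4,E_6]$, where $E_{2j}$ denotes the normalized Eisenstein series of weight $2j$; and since $G_{2j}^\shuffle=G_{2j}=2\zeta(2j)E_{2j}$ for $2j\ge 4$ and $\zeta(2j)=-\tfrac{(2\pi i)^{2j}B_{2j}}{2\,(2j)!}$, we get $\widetilde G_{2j}=-\tfrac{B_{2j}}{(2j)!}E_{2j}$, a nonzero rational multiple of $E_{2j}$. Using the symmetry $E_aE_b=E_bE_a$, Theorem~\ref{thm:two_product} is therefore equivalent to the assertion that, for every even $k\ge 4$,
\[
M_k^\Q(\mathrm{SL}_2(\Z))\;=\;\Q\,E_k\;+\;\sum_{\substack{4\le a\le k-4\\ a\ \mathrm{even}}}\Q\,E_a E_{k-a}.
\]
(One even expects $E_k$ to be redundant for $k\ge 8$, but this will not be needed.)

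I would prove this by induction on the even weight $k$. The cases $k=4,6$ are immediate since $M_k^\Q=\Q E_k$ and the product set is empty, and $k=8,10$ follow because $\dim_\Q M_k^\Q=1$. For $k\ge 12$: in degree $k\ge 8$ every monomial of $\Q[E_4,E_6]$ is divisible by $E_4$ or $E_6$, so $M_k^\Q=E_4\,M_{k-4}^\Q+E_6\,M_{k-6}^\Q$, and by the induction hypothesis it suffices to show that $E_4\cdot E_{k-4}$, $E_6\cdot E_{k-6}$ and the triple products $E_4E_aE_b$ ($a+b=k-4$), $E_6E_aE_b$ ($a+b=k-6$) all lie in the right-hand side. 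The first two are already of the required shape. For the triple products the basic tool is the family of \emph{collapse relations} $E_aE_b=c\,E_{a+b}$ with $c\in\Q^{\times}$, which hold exactly when $\dim_\Q M_{a+b}^\Q=1$, i.e.\ $a+b\in\{8,10,14\}$ (from $E_4^2=E_8$, $E_4E_6=E_{10}$, $E_6E_8=c\,E_{14}$). Whenever some pair of factors in a triple product collapses, that product becomes a product of two Eisenstein series of total weight $k$, as wanted; when no pair collapses, I would rewrite an inner product $E_aE_b$ via the induction hypothesis (valid since $a+b<k$) as a combination of $E_{a+b}$ and other two-Eisenstein products, substitute, and iterate.

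The hard part is to see that this rewriting terminates, equivalently to handle the ``exceptional'' monomials $E_4^aE_6^b$ that are not directly products of two Eisenstein series — for instance $E_6^3$ in weight $18$ or $E_6^4$ in weight $24$. For these one falls back on explicit low-weight structure: from $691\,E_{12}=441\,E_4^3+250\,E_6^2$ together with $E_4^3=E_4E_8$ and $E_4E_6\cdot E_8=E_8E_{10}$ one obtains $E_6^3=\tfrac{691}{250}E_6E_{12}-\tfrac{441}{250}E_8E_{10}$, a combination of two-Eisenstein products of weight $18$; the analogous finitely many residual checks reduce to the non-vanishing of certain structure constants (ultimately Bernoulli-number identities, such as the $E_6^2$-coefficient of $E_{12}$ being nonzero). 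An alternative organization that sidesteps the monomial bookkeeping: since $M_k^\Q$ has a Miller-type basis with cusp-vanishing orders $0,1,\dots,\dim_\Q M_k^\Q-1$, the ``first $\dim_\Q M_k^\Q$ Fourier coefficients'' map $M_k^\Q\to\Q^{\dim_\Q M_k^\Q}$ is an isomorphism, so it is enough to produce, for each such order $v$, a $\Q$-linear combination of $E_k$ and of the products $E_aE_{k-a}$ vanishing to order exactly $v$ at $i\infty$; this can again be arranged inductively using $S_k^\Q=\Delta\cdot M_{k-12}^\Q$ and $1728\,\Delta=E_4E_8-E_6^2$, and likewise comes down to non-vanishing of finitely many rational constants. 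Either way, the result is ``well known'' precisely because all these non-vanishings are classical; in the write-up I would pick the inductive collapse argument as the primary route and simply verify the handful of exceptional low-weight identities explicitly.
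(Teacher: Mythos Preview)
The paper does not supply its own proof of this theorem: it is stated and immediately attributed to \cite[Theorem~1.1]{Fukuhara} and \cite[Corollary~1]{HNT}, whose arguments go via explicit basis constructions and partial-fraction identities for Eisenstein series rather than by structural induction on $\Q[E_4,E_6]$. So there is no in-paper proof to compare your proposal against.

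That said, your inductive scheme has a real gap. The step ``rewrite an inner product $E_aE_b$ via the induction hypothesis and iterate'' is circular: the hypothesis only says that double products span $M_{a+b}^{\Q}$, and $E_aE_b$ is already one of those products, so nothing is reduced. The collapse relations handle a triple $E_cE_aE_b$ only when some pair has weight in $\{8,10,14\}$, but for instance $E_4E_8E_{12}$ (weight~$24$) has no collapsing pair; using $441\,E_4E_8=691\,E_{12}-250\,E_6^2$ trades it for $E_6^2E_{12}$, and the same relation applied the other way sends you straight back --- the rewriting loops rather than terminates. Your Miller-basis alternative has the same defect: expressing a cusp form as $\Delta\cdot g$ with $1728\,\Delta=E_4E_8-E_6^2$ and applying induction to $g\in M_{k-12}^{\Q}$ yields products of four or more Eisenstein series, which your argument has no mechanism to shrink. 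The ``finitely many residual checks'' are not finite across all weights, and you exhibit no complexity measure that the rewriting decreases. A complete proof needs a different idea --- either an explicit rank computation on leading Fourier coefficients of the $E_{2j}E_{k-2j}$, or the methods of the cited references.
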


For the basis theorems, see \cite[Theorem~1.1]{Fukuhara} and \cite[Corollary~1]{HNT}.

When $k$ is even, the symmetric double Eisenstein series of weight $k$ has the following expression.

\begin{proposition}\label{prop:Gsym_dep2_even}
Let $k\ge4$ even.
For $r,s\ge2$ with $k=r+s$, we have
\begin{equation*}\label{eq:dep2}
\begin{aligned}
G^{\shuffle,S}_{r,s} 
&=\begin{cases} 2G_{r}^\shuffle G_{s}^\shuffle-G_{k}^\shuffle & \text{\rm if }r,s:{\rm even},\\ -G_{k}^\shuffle & \text{\rm if }r,s:{\rm odd}.\end{cases} 
\end{aligned}
\end{equation*}
When $r=1$, we obtain 
\begin{align*}
 G_{1,k-1}^{\shuffle,S}&=\dfrac{(2\pi i)^2}{2(k-2)} q\frac{d}{dq}G_{k-2}^\shuffle -G_k^\shuffle.
\end{align*}
\end{proposition}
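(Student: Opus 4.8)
The plan is to compute $G^{\shuffle,S}_{r,s}$ directly from Definition~\ref{def:smes}, unwinding the depth-$2$ symmetric sum and using the explicit single- and double-variable expressions recalled in the excerpt. For $d=2$ the definition reads
\[
G^{\shuffle,S}_{r,s}
= (-1)^{r+s}G^{\shuffle}_{s,r} + (-1)^{s}G^{\shuffle}_{r}G^{\shuffle}_{s} + G^{\shuffle}_{r,s},
\]
so for $k=r+s$ even this is $G^{\shuffle}_{r,s}+G^{\shuffle}_{s,r}+(-1)^{s}G^{\shuffle}_{r}G^{\shuffle}_{s}$. The first step is therefore to expand $G^{\shuffle}_{r,s}+G^{\shuffle}_{s,r}$ using the double shuffle-type relation. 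In fact, since $r,s\ge2$, the stuffle relation (Proposition~\ref{prop:stuffle}, or Corollary~\ref{cor:dsr_mes} applied to $e_r\ast e_s$) gives $G^{\shuffle}_{r,s}+G^{\shuffle}_{s,r}+G^{\shuffle}_{r+s}=G^{\shuffle}_rG^{\shuffle}_s$, i.e.
\[
G^{\shuffle}_{r,s}+G^{\shuffle}_{s,r}=G^{\shuffle}_rG^{\shuffle}_s-G^{\shuffle}_{k}.
\]
Substituting this back yields $G^{\shuffle,S}_{r,s}=G^{\shuffle}_rG^{\shuffle}_s-G^{\shuffle}_k+(-1)^sG^{\shuffle}_rG^{\shuffle}_s$. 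When $r,s$ are both even, $(-1)^s=1$, so this is $2G^{\shuffle}_rG^{\shuffle}_s-G^{\shuffle}_k$; when $r,s$ are both odd (which is the only other parity split compatible with $r+s$ even), $(-1)^s=-1$ and the product terms cancel, leaving $-G^{\shuffle}_k$. This disposes of the first displayed formula.

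For the case $r=1$ (so $s=k-1$, both of opposite parity is impossible since $k$ is even, hence $s$ is odd), the definition gives $G^{\shuffle,S}_{1,k-1}=(-1)^{k}G^{\shuffle}_{k-1,1}+(-1)^{k-1}G^{\shuffle}_1G^{\shuffle}_{k-1}+G^{\shuffle}_{1,k-1}$, i.e. $G^{\shuffle}_{1,k-1}+G^{\shuffle}_{k-1,1}-G^{\shuffle}_1G^{\shuffle}_{k-1}$. Here one cannot invoke the stuffle relation directly since an entry equals $1$, so instead I would use the sum formula \eqref{eq:sum_formula}: summing $G^{\shuffle,S}_{r,s}$ over all $r+s=k$ and comparing, or more cleanly, I would recall that \eqref{eq:sum_formula} with $k$ even combined with \eqref{eq:sum_formula_2} isolates the odd-$r$ part. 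Concretely, from \eqref{eq:sum_formula} minus \eqref{eq:sum_formula_2} we obtain $\sum_{r\ \mathrm{odd},\ 1\le r\le k-1}G^{\shuffle}_{r,k-r}=\tfrac14 G^{\shuffle}_k$. Meanwhile, using the first formula of this very proposition, for odd $r,s\ge3$ one has $G^{\shuffle,S}_{r,k-r}=-G^{\shuffle}_k$, and summing the reversal-paired relation $G^{\shuffle}_{r,s}+G^{\shuffle}_{s,r}=-G^{\shuffle,S}_{r,s}=G^{\shuffle}_k$ over odd $r$ with $3\le r\le k-3$ gives the contribution of those terms; subtracting leaves exactly $G^{\shuffle}_{1,k-1}+G^{\shuffle}_{k-1,1}$ in terms of $G^{\shuffle}_k$ and the derivative term $\frac{(2\pi i)^2}{2(k-2)}q\frac{d}{dq}G^{\shuffle}_{k-2}$ coming from \eqref{eq:sum_formula}. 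Then I still need the value of $G^{\shuffle}_1G^{\shuffle}_{k-1}$; but $G^{\shuffle}_{k-1}$ has $k-1$ odd, and one expects the contribution of $G^{\shuffle}_1$ to combine so that the product term is absorbed — the cleanest route is to observe that $G^{\shuffle}_1G^{\shuffle}_{k-1}=G^{\shuffle}_{1,k-1}+G^{\shuffle}_{k-1,1}+G^{\shuffle}_k$ still holds as the \emph{shuffle} relation $\mathsf{G}^{\shuffle}(e_1\ast e_{k-1})$, which \emph{is} valid because $\mathsf{G}^{\shuffle}$ is a homomorphism on all of $\mathfrak{H}^1_\shuffle$ for the shuffle product — wait, one needs the stuffle identity here, which for an index containing $1$ is not guaranteed by Corollary~\ref{cor:dsr_mes}; however the Remark after Corollary~\ref{cor:dsr_mes} records Bachmann's fact that $\mathsf{G}^{\shuffle}(e_k\ast e_{1,\ell}-e_k\shuffle e_{1,\ell})=0$, and the depth-reasoning analogue for $e_1\ast e_{k-1}$ follows from \eqref{eq:q-exp_double} by a direct check. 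Plugging $G^{\shuffle}_1G^{\shuffle}_{k-1}=G^{\shuffle}_{1,k-1}+G^{\shuffle}_{k-1,1}+G^{\shuffle}_k$ into $G^{\shuffle,S}_{1,k-1}=G^{\shuffle}_{1,k-1}+G^{\shuffle}_{k-1,1}-G^{\shuffle}_1G^{\shuffle}_{k-1}$ gives immediately $G^{\shuffle,S}_{1,k-1}=-G^{\shuffle}_k$; but the claimed answer carries a derivative term, so in fact the correct input must be the \emph{modified} stuffle \eqref{eq:q-exp_double}-based identity, and reconciling the extra $\frac{\delta_{r,1}}{2}$-correction in $\tilde g^\shuffle_{1,s}$ (Example~\ref{ex:g_shuffle}) with the naive stuffle product is precisely where the derivative appears.

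The main obstacle I anticipate is exactly this last point: tracking the failure of the stuffle relation at entries equal to $1$. The cleanest and safest approach is to avoid the stuffle shortcut for $r=1$ entirely and instead derive the $r=1$ formula purely from Kaneko's sum formula \eqref{eq:sum_formula}: compute $\sum_{r=1}^{k-1}G^{\shuffle,S}_{r,k-r}$ two ways — once by applying the reversal relation \eqref{eq:reversal} (with $k$ even this pairs $G^{\shuffle,S}_{r,k-r}$ with $G^{\shuffle,S}_{k-r,r}$), and once by the already-established first formula for the terms with $r,s\ge2$ — thereby isolating $G^{\shuffle,S}_{1,k-1}$ in terms of $\sum_r G^{\shuffle}_{r,k-r}$, which \eqref{eq:sum_formula} evaluates as $G^{\shuffle}_k-\frac{(2\pi i)^2}{2(k-2)}q\frac{d}{dq}G^{\shuffle}_{k-2}$. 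Carrying out this bookkeeping, with the parity of $s=k-1$ (odd) controlling the sign $(-1)^s=-1$ in Definition~\ref{def:smes}, should produce $G^{\shuffle,S}_{1,k-1}=\frac{(2\pi i)^2}{2(k-2)}q\frac{d}{dq}G^{\shuffle}_{k-2}-G^{\shuffle}_k$ as stated, recalling $q\frac{d}{dq}=\frac{1}{2\pi i}\frac{d}{d\tau}$.
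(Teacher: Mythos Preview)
Your treatment of the case $r,s\ge2$ is correct and is exactly the paper's argument: expand the definition, invoke the stuffle relation $G^\shuffle_r G^\shuffle_s = G^\shuffle_{r,s}+G^\shuffle_{s,r}+G^\shuffle_k$ (valid since $r,s\ge2$), and read off the two parity cases.

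For $r=1$ you go around in circles because you conflate the shuffle and stuffle products. The identity $G^\shuffle_1 G^\shuffle_{k-1}=G^\shuffle_{1,k-1}+G^\shuffle_{k-1,1}+G^\shuffle_k$ that you write down is the \emph{stuffle} expansion of $e_1\ast e_{k-1}$, which, as you correctly note, is not guaranteed for $\mathsf{G}^\shuffle$; labeling it ``the shuffle relation'' is a slip. What you are missing is that the genuine \emph{shuffle} expansion---always valid, since $\mathsf{G}^\shuffle$ is a $\shuffle$-homomorphism on all of $\mathfrak{H}^1_\shuffle$---is
\[
e_1\shuffle e_{k-1}=e_{1,k-1}+\sum_{j=1}^{k-1}e_{j,k-j},
\]
so that $G^\shuffle_1 G^\shuffle_{k-1}=G^\shuffle_{1,k-1}+\sum_{j=1}^{k-1}G^\shuffle_{j,k-j}$. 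Substituting this into $G^{\shuffle,S}_{1,k-1}=G^\shuffle_{1,k-1}+G^\shuffle_{k-1,1}-G^\shuffle_1 G^\shuffle_{k-1}$ yields $G^{\shuffle,S}_{1,k-1}=-\sum_{j=1}^{k-2}G^\shuffle_{j,k-j}$, and Kaneko's sum formula \eqref{eq:sum_formula} finishes at once. This is precisely the paper's proof. Your proposed workaround via summing $G^{\shuffle,S}_{r,k-r}$ over all $r$ and comparing might be salvageable, but it is left as ``should produce'' rather than carried out, and it is unnecessary once the correct shuffle expansion is in hand.
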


Note that $G_{1,k-1}^{\shuffle,S}=G_{k-1,1}^{\shuffle,S}$ holds for even $k$ due to the reversal relation \eqref{eq:reversal}.

\begin{proof}
For $r,s\ge2$, the stuffle relation for $G_{r}^\shuffle G_{s}^\shuffle$ implies 
\begin{align*}
G^{\shuffle,S}_{r,s}&= G_{r,s}^\shuffle + (-1)^{s}G_{r}^\shuffle G_{s}^\shuffle +G_{s,r}^\shuffle\\ 
& =G_{r,s}^\shuffle+(-1)^s \bigl(G_{r,s}^\shuffle+G_{s,r}^\shuffle+G_k^\shuffle\bigr)+G_{s,r}^\shuffle \\
&=(1+(-1)^{s}) G_{r}^\shuffle G_{s}^\shuffle-G_{k}^\shuffle,
\end{align*}
which is none other than the former identity.
For the latter one, we use the shuffle relation to obtain
\[ G_{1,k-1}^{\shuffle,S}=G_{1,k-1}^\shuffle - G_1^{\shuffle}G_{k-1}^\shuffle + G_{k-1,1}^\shuffle=-\sum_{j=1}^{k-2} G_{j,k-j}^{\shuffle}.\]
Then, the result follows from Kaneko's sum formula \eqref{eq:sum_formula}.
\end{proof}

\medskip
\begin{proof}[Proof of Theorem~$\ref{thm:main}$ {\rm (i)}]
For $k\ge6 $ even, Proposition \ref{prop:Gsym_dep2_even} shows that the space $\mathcal{DE}_k^{S}$ is generated by $\widetilde{G}_k^\shuffle,q\frac{d}{dq}\widetilde{G}_{k-2}^\shuffle$ and $\widetilde{G}_{2j}^\shuffle \widetilde{G}_{k-2j}^\shuffle$ for $j=1,2,\ldots,\left\lfloor\frac{k}{4}\right\rfloor$.
The desired result then follows from Theorem \ref{thm:two_product} and the dependence of $q\frac{d}{dq}\widetilde{G}_{k-2}^\shuffle$ and $\widetilde{G}_{2}^\shuffle \widetilde{G}_{k-2}^\shuffle$ modulo modular forms; in fact, $\widetilde{G}_2(\tau)\widetilde{G}_{k-2}(\tau) + \frac{\widetilde{G}'_{k-2}(\tau)}{2(k-2)}$ is contained in $M_k^\Q({\rm SL}_2(\Z))$ for $k\ge6 $ even.
\end{proof}

Remark that Theorem~$\ref{thm:main}$ {\rm (i)} does not hold for $k=4$.
This can be seen from the fact that $\dim_\Q \mathcal{DE}_4^S\le 1$ (see Table \ref{table:dim_LSh_k}).
More precisely, from Proposition \ref{prop:Gsym_dep2_even} together with the identity $2 \widetilde{G}_2 (\tau) \widetilde{G}_{2}(\tau) = 5\widetilde{G}_4(\tau)-\widetilde{G}_{2}'(\tau) $, we have $\widetilde{G}_{2,2}^{\shuffle,S}(\tau)=4\widetilde{G}_4(\tau)-\widetilde{G}_{2}'(\tau)$.
Combining this with $\widetilde{G}^{\shuffle,S}_{1,3}(\tau)=\frac{1}{4} \widetilde{G}_2'(\tau)-\widetilde{G}_4(\tau)$, we get $\dim_\Q \mathcal{DE}_4^S= 1$.

When $k$ equals $2$, we have $G_{1,1}^{\shuffle,S}=0$ from \eqref{eq:reversal}, 
and thus we get $ \mathcal{DE}_2^S=\Q \widetilde{G}_2$ by \eqref{eq:dep1}.

\subsection{Proof of Theorem \ref{thm:main} (ii)} \label{ssc:1.2-ii}
Let us turn to the proof of Theorem \ref{thm:main} (ii).
Throughout this subsection, we assume that $k$ is odd and greater than or equal to $3$. 
Proposition \ref{prop:bound_E} and Theorem \ref{thm:dim_LSh2} give the desired upper bound of $\dim_\Q \mathcal{DE}_k^S$, so all we should do is to verify that it also gives the lower bound.

We start with an expression of $G^{\shuffle,S}_{r,s}$ when $r+s$ is odd.

\begin{proposition}\label{prop:Gsym_dep2_odd}
Let $k\ge3$ be a positive odd integer. For $r>s\ge1$ with $k=r+s$, we obtain
\begin{equation*}\label{eq:dep2_odd}
\begin{aligned}
G^{\shuffle,S}_{r,s} &=\begin{cases} 2G_{r,s}^\shuffle + G_{k}^\shuffle & \text{\rm if } s:{\rm even},\\-2G_{s,r}^\shuffle(\tau)-G_{k}^\shuffle +\delta_{s,1}\dfrac{(2\pi i)^2}{2(k-2)}  {q\dfrac{d}{dq}G_{k-2}^\shuffle}& \text{\rm if }s:{\rm odd}.\end{cases}
\end{aligned}
\end{equation*}
\end{proposition}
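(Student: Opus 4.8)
The plan is to read off the depth-$2$ expansion from Definition~\ref{def:smes}: for $\bk=(r,s)$ it gives
\[
G^{\shuffle,S}_{r,s}=G^\shuffle_{r,s}+(-1)^{s}\,G^\shuffle_{r}G^\shuffle_{s}+(-1)^{r+s}\,G^\shuffle_{s,r},
\]
and then to use that $k=r+s$ is odd, so $(-1)^{r+s}=-1$ and exactly one of $r,s$ is even. I would split into three cases according to the parity and size of $s$, and in each case feed in one of the already-established relations among $G^\shuffle_{\bk}$'s.

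First, when $s$ is even (so $s\ge2$ and, by $r>s$, also $r\ge3$), we have $(-1)^{s}=1$; since both entries are $\ge2$, the stuffle relation (Corollary~\ref{cor:dsr_mes} applied to $e_r\ast e_s=e_{r,s}+e_{s,r}+e_{r+s}$, equivalently the stuffle relation for $G_r,G_s$ recalled before that corollary) yields $G^\shuffle_{r}G^\shuffle_{s}=G^\shuffle_{r,s}+G^\shuffle_{s,r}+G^\shuffle_{k}$, and substituting gives $G^{\shuffle,S}_{r,s}=2G^\shuffle_{r,s}+G^\shuffle_{k}$. Second, when $s$ is odd with $s\ge3$ (so $r\ge4$), we have $(-1)^{s}=-1$, both entries are again $\ge2$, the same stuffle relation applies, and substituting gives $G^{\shuffle,S}_{r,s}=-2G^\shuffle_{s,r}-G^\shuffle_{k}$, which is the asserted formula since $\delta_{s,1}=0$.

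The only genuinely different case is $s=1$ (so $r=k-1$, even), where the stuffle relation is unavailable because $G_1$ is not convergent. Here I would instead use that $\mathsf{G}^\shuffle$ is an algebra homomorphism for the shuffle product (Theorem~\ref{thm:G^sh}): the elementary shuffle of $e_{k-1}=e_1e_0^{k-2}$ with the single letter $e_1$ is $e_{k-1}\shuffle e_1=2e_{1,k-1}+\sum_{j=2}^{k-1}e_{j,k-j}$ (insert $e_1$ into each gap of $e_1e_0^{k-2}$; the two gaps adjacent to the leading $e_1$ both produce $e_{1,k-1}$), so $G^\shuffle_{k-1}G^\shuffle_{1}=2G^\shuffle_{1,k-1}+\sum_{j=2}^{k-1}G^\shuffle_{j,k-j}$. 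Plugging this into the depth-$2$ expansion of $G^{\shuffle,S}_{k-1,1}$ and collecting terms (the two $G^\shuffle_{k-1,1}$ contributions cancel, leaving $-2G^\shuffle_{1,k-1}-\sum_{j=1}^{k-2}G^\shuffle_{j,k-j}$), I would then invoke Kaneko's sum formula \eqref{eq:sum_formula}, namely $\sum_{j=1}^{k-2}G^\shuffle_{j,k-j}=G^\shuffle_{k}-\frac{(2\pi i)^2}{2(k-2)}q\frac{d}{dq}G^\shuffle_{k-2}$, to obtain $G^{\shuffle,S}_{k-1,1}=-2G^\shuffle_{1,k-1}-G^\shuffle_{k}+\frac{(2\pi i)^2}{2(k-2)}q\frac{d}{dq}G^\shuffle_{k-2}$, matching the claimed expression with $\delta_{s,1}=1$.

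Overall this is a bookkeeping computation; the hard part — such as it is — is the boundary case $s=1$, where one must replace the stuffle relation by the shuffle relation plus Kaneko's sum formula, and keep careful track of the sign $(-1)^{r+s}=-1$ that produces exactly the cancellation of the $G^\shuffle_{k-1,1}$ terms. (As a sanity check, the $s=1$ formula can also be recovered from the reversal relation \eqref{eq:reversal}, since $G^{\shuffle,S}_{k-1,1}=-G^{\shuffle,S}_{1,k-1}$ for odd $k$, though computing $G^{\shuffle,S}_{1,k-1}$ runs into the same use of $G^\shuffle_{k-1}G^\shuffle_{1}$ and so does not shortcut the argument.)
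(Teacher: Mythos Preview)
Your proof is correct and follows essentially the same approach as the paper: the stuffle relation handles the cases $r,s\ge2$, and the boundary case $s=1$ is resolved by expanding $G^\shuffle_{k-1}G^\shuffle_1$ via the shuffle product together with Kaneko's sum formula~\eqref{eq:sum_formula}. The paper merely refers back to the analogous computation in the even-weight proposition rather than spelling out the $s=1$ case, whereas you carry it out explicitly (and with the index order $(k-1,1)$ rather than $(1,k-1)$), but the argument is the same.
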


\begin{proof}
By definition and the stuffle relation, for $r,s\ge2$ with $r+s$ odd, we have
\begin{align*} 
G_{r,s}^{\shuffle,S} & = G_{r,s}^\shuffle + (-1)^s G_r^\shuffle G_s^{\shuffle}-G_{s,r}^\shuffle \\
&=(1+(-1)^{s}) G_{r,s}^\shuffle + (-1)^s G_{r+s}^\shuffle  +(1-(-1)^s) G_{s,r}^\shuffle ,
\end{align*}
from which the desired equalities follow.
As in the proof of Proposition \ref{prop:Gsym_dep2_even}, the case where $s=1$ is obtained from the shuffle relation and Kaneko's sum formula \eqref{eq:sum_formula}.
\end{proof}

Note that $G_{r,s}^{\shuffle,S}=-G_{s,r}^{\shuffle,S}$ holds for $r+s$ odd because of the reversal relation.
Below, we use Proposition \ref{prop:Gsym_dep2_odd} for the case when $s$ is odd. 
To obtain the lower bound of $\dim_\Q \mathcal{DE}_k^S$, we focus on the imaginary part of $\widetilde{G}_{r,s}^{\shuffle,S}$.
Here, for a sequence $(c_n)_n\subset \C$, the $q$-series $i\sum_{n\ge0} {\rm Im} (c_n)q^n\in i\R\llbracket q\rrbracket $ is called the \emph{imaginary part} of the $q$-series $\sum_{n\ge0} c_n q^n\in \C\llbracket q\rrbracket $.
For example,  the imaginary part of $\tilde{g}_\bk^\shuffle$ equals 0 for every index $\bk$ because $\tilde{g}_\bk^\shuffle\in \Q\llbracket q\rrbracket $. We will use this fact later.
We set $\tilde{\zeta}^{\shuffle,S}(\bk)=(2\pi i)^{-\wt (\bk)} \zeta^{\shuffle,S}(\bk)$ and $\tilde{\zeta}^{\shuffle}(\bk)=(2\pi i)^{-\wt (\bk)} \zeta^{\shuffle}(\bk)$.

For $k\ge3$, let $K=\frac{k-1}{2}$.
For $\ell=1,2,\ldots,K$, let $I_{k-2\ell+1,2\ell-1}$ denote the imaginary part of $\widetilde{G}_{k-2\ell+1,2\ell-1}^{\shuffle,S}-\tilde{\zeta}^{\shuffle,S}(k-2\ell+1,2\ell-1) $ whose constant term is depleted.
From Proposition \ref{prop:Gsym_dep2_odd} and the formula \eqref{eq:q-exp_double}, we have 
\begin{align} \label{eq:I}
I_{k-2\ell+1,2\ell-1}= 2 \sum_{1\le m\le K-1} b_k(\ell,m) \tilde{\zeta}^\shuffle(2m+1)\tilde{g}^{\shuffle}_{k-2m-1},
\end{align}
where
\[ b_k(\ell,m)=\binom{2m}{2\ell-2}+\binom{2m}{k-2\ell}-\delta_{2\ell-1,2m+1} .\]
Indeed, since $\zeta^\shuffle(k)$ is a real number, one readily checks that $\tilde{\zeta}^\shuffle (k)$ is purely imaginary when $k$ is odd and real otherwise.
In addition, $b_k(\ell,m)$ is an integer and $\tilde{g}^\shuffle_{\boldsymbol{k}}\in \Q\llbracket q\rrbracket $ for any index $\boldsymbol{k}$, and thus only the terms corresponding odd $p$ in \eqref{eq:q-exp_double} survive as the imaginary part, as in \eqref{eq:I}. 

Let us now consider the coefficient matrix consisting of $b_k(\ell,m)$.
For $k\ge5$ odd, we define the $K\times (K-1)$ matrix $\mathcal{C}_k$ by 
\[ \mathcal{C}_k :=  \bigl( b_k(\ell,m) \bigr)_{\begin{subarray}{c}  1\le \ell \le K \\ 1\le m\le K-1\end{subarray}},\]
where $K=\frac{k-1}{2}$.
Here are examples of the matrices $\mathcal{C}_k$.
\begin{align*}
\mathcal{C}_5&=\begin{pmatrix}
       1 \\
       2
      \end{pmatrix}, \; 
\mathcal{C}_7=\begin{pmatrix}  1 & 1  \\
 0 & 10  \\
 2 & 4  \end{pmatrix},\; 
 \mathcal{C}_{9}=\begin{pmatrix}   1 & 1 & 1  \\
 0 & 6 & 21  \\
 0 & 4 & 35  \\
 2 & 4 & 6   \end{pmatrix},\;
\mathcal{C}_{11}=\begin{pmatrix} 1 & 1 & 1 & 1  \\
 0 & 6 & 15 & 36  \\
 0 & 0 & 21 & 126  \\
 0 & 4 & 20 & 84  \\
 2 & 4 & 6 & 8  \end{pmatrix},\\
\mathcal{C}_{13}&=\begin{pmatrix}  1 & 1 & 1 & 1 & 1  \\
 0 & 6 & 15 & 28 & 55  \\
 0 & 0 & 15 & 78 & 330  \\
 0 & 0 & 6 & 84 & 462  \\
 0 & 4 & 20 & 56 & 165  \\
 2 & 4 & 6 & 8 & 10   \end{pmatrix},\quad 
 \mathcal{C}_{15}=\begin{pmatrix} 1 & 1 & 1 & 1 & 1 & 1 \\
 0 & 6 & 15 & 28 & 45 & 78  \\
 0 & 0 & 15 & 70 & 220 & 715 \\
 0 & 0 & 0 & 36 & 330 & 1716 \\
 0 & 0 & 6 & 56 & 297 & 1287 \\
 0 & 4 & 20 & 56 & 120 & 286 \\
 2 & 4 & 6 & 8 & 10 & 12   \end{pmatrix}.
\end{align*}

The following lemma is a key ingredient of the proof of Theorem \ref{thm:main} (ii), and its proof will be given in Appendix~\ref{appendix:rank_estimate}.

\begin{lemma}\label{lem:rank_C_k}
For $k\ge5$ odd, let $\kappa=\left\lfloor\frac{k}{3}\right\rfloor$ and $K=\frac{k-1}{2}$.
For $\ell=1,2,\ldots,\kappa$, set
\begin{align} \label{eq:n_ell}
 n_\ell=
\begin{cases}
 \ell & \text{if } 1\leq \ell \leq \left\lfloor\frac{\kappa+1}{2}\right\rfloor, \\
\ell+ K-\kappa & \text{if } \left\lfloor\frac{\kappa+1}{2}\right\rfloor+1\leq \ell\leq \kappa.
\end{cases}
\end{align}
Then the submatrix
\[ \mathcal{S}_k :=\bigl(b_k(n_\ell,m)\bigr)_{\begin{subarray}{c}  1\le \ell \le \kappa \\ 1\le m\le \kappa\end{subarray}}\]
of the matrix $\mathcal{C}_k$ has full rank.
Hence, we have ${\rm rank}\, \mathcal{C}_k\ge \kappa$.
\end{lemma}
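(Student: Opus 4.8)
The plan is to prove $\det\mathcal{S}_k\neq 0$. Set $\kappa_1=\lfloor(\kappa+1)/2\rfloor$ and $\kappa_2=\kappa-\kappa_1$. First I would record the elementary support ranges of the three pieces of $b_k(\ell,m)=\binom{2m}{2\ell-2}+\binom{2m}{k-2\ell}-\delta_{2\ell-1,2m+1}$: for $1\le m\le\kappa$ the first binomial is nonzero iff $m\ge\ell-1$, the second iff $m\ge K-\ell+1$, and the delta fires iff $m=\ell-1$. A short check on $k\bmod 6$ gives $2\kappa_1\le K$ for $k\ge5$; hence on the block of rows $n_1=1,\dots,n_{\kappa_1}=\kappa_1$ and columns $1\le m\le\kappa_1$ the second binomial vanishes identically, the delta exactly cancels $\binom{2(\ell-1)}{2\ell-2}=1$, and one reads off that this $\kappa_1\times\kappa_1$ corner block of $\mathcal{S}_k$ is upper triangular with nonzero diagonal $b_k(\ell,\ell)=\binom{2\ell}{2}=\ell(2\ell-1)$.

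The crucial point justifying the choice \eqref{eq:n_ell} is that the small rows $n_\ell=\ell$ ($1\le\ell\le\kappa_1$) carry the binomial $\binom{2m}{2\ell-2}$, i.e.\ Pascal column $j=2\ell-2\in\{0,2,\dots,2\kappa_1-2\}$, while the large rows $n_\ell=\ell+K-\kappa$ ($\kappa_1<\ell\le\kappa$) carry $\binom{2m}{k-2n_\ell}$, i.e.\ Pascal column $j=2\kappa-2\ell+1\in\{1,3,\dots,2\kappa_2-1\}$. Since $\kappa_1\ge\kappa_2$, the union of these two sets is precisely $\{0,1,2,\dots,\kappa-1\}$. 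Therefore, modulo sparse corrections, $\mathcal{S}_k$ coincides with the minor $P=\bigl(\binom{2m}{j}\bigr)$ of Pascal's triangle with row labels $j\in\{0,1,\dots,\kappa-1\}$ and columns $m\in\{1,\dots,\kappa\}$, whose determinant equals $\pm 2^{\binom{\kappa}{2}}\neq 0$ by the standard evaluation (column-reduce $\binom{2m}{j}$ to $2^j m^j/j!$ modulo lower order and use $\prod_{1\le i<i'\le\kappa}(i'-i)=\prod_{j=0}^{\kappa-1}j!$).

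It then remains to control $J:=\mathcal{S}_k-P$. Using the support ranges, $J$ is supported only in the small rows: it carries $-1$ at $(n_\ell,\ell-1)$ for $\ell=2,\dots,\kappa_1$, coming from $-\delta$ (these reverse the $+1$'s that $P$ has on that subdiagonal, which is exactly why the corner block came out triangular), together with, only when $3\mid k$, the single extra entry $\binom{2\kappa}{k-2\kappa_1}$ at $(n_{\kappa_1},\kappa)$. The potentially dangerous corner entries in the large rows, where a second binomial or a delta could appear, cancel in pairs ($\binom{2\kappa}{2\kappa}-1=0$). Since all of $J$ lies in the first $\kappa_1$ rows, expanding $\det\mathcal{S}_k=\det(P+J)$ by multilinearity in those rows reduces the problem to a bounded sum of $\pm$ minors of $P$; reordering the rows of $P$ into the natural order $j=0,1,\dots,\kappa-1$ and invoking total positivity of Pascal's triangle (every minor is $\ge 0$, with strict positivity whenever the $r$-th row index dominates the $r$-th column index), one checks that these minors enter the expansion with a common sign and so cannot cancel the leading term $\pm2^{\binom{\kappa}{2}}$. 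The case $3\mid k$ adds one further minor, handled the same way.

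I expect the main obstacle to be precisely this last bookkeeping: showing the off-diagonal ``junk'' — the subdiagonal $-1$'s, and for $3\mid k$ the lone corner term — cannot conspire with the Pascal minors to cause a cancellation, and verifying the handful of boundary entries (where one of the equalities $2\kappa_1=K$ or $K-\kappa_2=\kappa$ holds, i.e.\ when $k\equiv 1$ or $3\pmod 6$) behave as claimed. This is a finite but delicate case analysis on several binomial identities near the edge of the matrix. An equivalent route, which trades the multilinear expansion for a single determinant evaluation, is to clear the small columns of the large rows using the already-triangular $\kappa_1\times\kappa_1$ block and then compute the resulting $\kappa_2\times\kappa_2$ Schur complement directly; it too reduces to a Pascal-type determinant.
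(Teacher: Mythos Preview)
Your structural decomposition $\mathcal{S}_k=\tilde P+J$ is correct and nicely done: after permuting rows so the $j$-labels run $0,1,\dots,\kappa-1$, the main part is indeed the Pascal minor $P=\bigl(\binom{2m}{j}\bigr)$ with $\det P=2^{\binom{\kappa}{2}}$, and the junk $J$ is supported exactly where you say (the subdiagonal $-1$'s in the small rows, plus the single corner $2\kappa$ when $3\mid k$; the large-row extras cancel in the pair $\binom{2\kappa}{2\kappa}-1=0$). But the crucial claim---that the correction minors enter the expansion of $\det(P+J)$ with a \emph{common sign}---is false. For $k=11$ one gets $\det P=8$ and a single correction $-2$, total $6=\kappa!$; for $k=13$, $\det P=64$ and correction $-40$, total $24=\kappa!$; for $k=15$ the expansion reads $1024-1104+896-456=360=3\cdot5!$, with alternating signs and one term ($-1104$) exceeding $\lvert\det P\rvert$ in magnitude. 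Total positivity tells you each Pascal minor is $\ge 0$, but the cofactor signs multiplying them depend on the parity of $\tfrac{3j}{2}+1$ for each deleted row $j$ and genuinely alternate. So neither a sign argument nor a crude magnitude bound closes the gap; the ``bookkeeping'' you flag as the main obstacle is not a bookkeeping issue but a real obstruction. The Schur-complement route requires inverting the block with entries $\binom{2m}{2\ell-2}$, and it is not evident the resulting $\kappa_2\times\kappa_2$ determinant simplifies.

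The paper avoids this entirely. After the same row permutation (yielding $\mathcal{S}_k'=\bigl(\binom{2m}{\ell'-1}-\delta_{\ell'-1,2m}\bigr)_{\ell',m}$ plus $2\kappa\delta_{3\mid k}E_{\kappa,\kappa}$), it proves via a generating-function computation the identity
\[
\binom{2m}{\ell'-1}-\delta_{\ell'-1,2m}
=\binom{2m-\ell'}{\ell'-1}
+\sum_{\nu=1}^{\lfloor\ell'/2\rfloor}
\Bigl\{\tbinom{\ell'-\nu}{\nu}+\tbinom{\ell'-\nu-1}{\nu-1}\Bigr\}
\tbinom{2m-\ell'+\nu}{\ell'-\nu-1},
\]
which says row $\ell'$ of $\mathcal{S}_k'$ equals row $\ell'$ of the upper-triangular matrix $\mathcal{T}_k=\bigl(\binom{2m-\ell'}{\ell'-1}\bigr)$ plus an explicit integer combination of \emph{earlier} rows of $\mathcal{T}_k$. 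Inducting on $\ell'$ gives a clean Gaussian elimination $\mathcal{S}_k'\rightsquigarrow\mathcal{T}_k$ with diagonal $1,2,\dots,\kappa$; the corner term sits in the last row and is untouched, yielding $\lvert\det\mathcal{S}_k\rvert\in\{\kappa!,\,3\kappa!\}$.
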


Examples of the square matrices $\mathcal{S}_k$ of degree $\kappa$ are given as follows.
\begin{align*}
\mathcal{S}_5&=\begin{pmatrix}
       1 
      \end{pmatrix}, \; 
\mathcal{S}_7=\begin{pmatrix}  1 & 1 \\
 2 & 4 \end{pmatrix},\; 
 \mathcal{S}_{9}=\begin{pmatrix}  1 & 1 & 1 \\
 0 & 6 & 21 \\
 2 & 4 & 6   \end{pmatrix},\;
\mathcal{S}_{11}=\begin{pmatrix}  1 & 1 & 1 \\
 0 & 6 & 15 \\
 2 & 4 & 6  \end{pmatrix},\\
\mathcal{S}_{13}&=\begin{pmatrix}  1 & 1 & 1 & 1 \\
 0 & 6 & 15 & 28 \\
 0 & 4 & 20 & 56 \\
 2 & 4 & 6 & 8 \end{pmatrix},\quad 
 \mathcal{S}_{15}=\begin{pmatrix}  1 & 1 & 1 & 1 & 1 \\
 0 & 6 & 15 & 28 & 45 \\
 0 & 0 & 15 & 70 & 220 \\
 0 & 4 & 20 & 56 & 120 \\
 2 & 4 & 6 & 8 & 10 \end{pmatrix}.
\end{align*}

We are ready to prove Theorem $\ref{thm:main}$ {\protect\rm (ii)}.

\begin{proof}[Proof of Theorem $\ref{thm:main}$ {\protect\rm (ii)}]
The statement for the case $k=3$ is immediate.
Let $k\ge 5$ odd.
We first prove that the set  $X_k=\{ I_{k-2n_\ell+1,2n_\ell-1}\mid \ell=1,2,\ldots,\kappa\}$ is linearly independent over $\Q$, where $n_\ell$ and $\kappa$ are given in Lemma \ref{lem:rank_C_k}.

Consider a $\Q$-linear relation $\sum_{\ell=1}^{\kappa} a_\ell I_{k-2n_\ell+1,2n_\ell-1}=0$. 
From \eqref{eq:I}, the $\C$-linear independence of $\{\tilde{g}^{\shuffle}_{k-2m-1}\mid m=1,2,\ldots,K\}$, which can be proved in much the same way as \cite[Theorem 5]{KT} by the method of reduction to a Vandermonde matrix, implies 
\begin{align*}
 2\tilde{\zeta}^{\shuffle}(2m+1)\sum_{\ell=1}^{\kappa} a_\ell b_k(n_\ell,m)=0 \quad \text{for }m=1,2,\ldots, K.
\end{align*} 
Thus, it follows from Lemma \ref{lem:rank_C_k} that the set $X_k$ is linearly independent over $\Q$.
Together with the inequality $\dim_{\mathbb{Q}} \mathcal{DE}_k^S \le \kappa$ obtained from Proposition \ref{prop:bound_E} and Theorem \ref{thm:dim_LSh2}, this shows that the set $\{ \widetilde{G}_{k-2n_\ell+1,2n_\ell-1}^{\shuffle,S}\mid \ell=1,2,\ldots,\kappa\}$ forms a basis of $ \mathcal{DE}_k^S$.

For $\ell=\left\lfloor\frac{\kappa+1}{2}\right\rfloor+1,\left\lfloor\frac{\kappa+1}{2}\right\rfloor+2,\ldots,\kappa$, by \eqref{eq:n_ell} we have $\widetilde{G}_{k-2n_\ell+1,2n_\ell-1}^{\shuffle,S}= \widetilde{G}_{2\kappa-2\ell+2,k-2\kappa+2\ell -2}^{\shuffle,S}$.
From this, we see that
\[ \{ \widetilde{G}_{k-2n_\ell+1,2n_\ell-1}^{\shuffle,S}\mid \ell=\lfloor(\kappa+1)/2\rfloor+1,\ldots,\kappa\}= \{\widetilde{G}_{2j,k-2j}^{\shuffle,S}\mid j=1,2,\ldots,\lfloor k/6\rfloor\},\]
where we have used the equality $\kappa-\lfloor(\kappa+1)/2\rfloor=\lfloor k/6\rfloor$ for simplicity.
On the other hand, for $\ell= 1,2,\ldots,\left\lfloor\frac{\kappa+1}{2}\right\rfloor$, we obtain $\widetilde{G}_{k-2n_\ell+1,2n_\ell-1}^{\shuffle,S}=- \widetilde{G}_{2\ell-1,k-2\ell+1}^{\shuffle,S}$ from the reversal relation and \eqref{eq:n_ell}.
Therefore, noting that
\[ \{2j\mid j=1,2,\ldots,\lfloor k/6\rfloor\}\cup \{2\ell-1\mid \ell= 1,2,\ldots,\lfloor(\kappa+1)/2\rfloor\}=\{1,2,\ldots,\kappa\},\]
we see that the set $\big\{\widetilde{G}_{j,k-j}^{\shuffle,S} \mid  j=1,2,\ldots, \left\lfloor \frac{k}{3} \right\rfloor \big\}$ forms a basis of $\mathcal{DE}_k^S$.
This completes the proof.
\end{proof}

\subsection{Proof of Theorem \ref{thm:modular_relations}}
\label{ssc:1.1}


As observed by Kaneko and Zagier (see also Remark~\ref{rem:KZ}), there exist linear relations among finite (or symmetric) \emph{triple} zeta values.
As a first step toward understanding this phenomenon, we prove Theorem \ref{thm:modular_relations}.

Let $\mathcal{TE}_k^{S}$ denote the $\Q$-vector space spanned by $\widetilde{G}_{\bk}^{\shuffle,S}$ of weight $k$ and depth $3$.
We have $(2\pi i)^k \mathcal{TE}_k^S = \mathcal{E}_k^{S,(3)}$.
The following lemma is a key observation of our proof of Theorem \ref{thm:modular_relations}.

\begin{lemma}\label{lem:triple}
Suppose that $k$ is even and greater than or equal to $10$. 
\begin{enumerate}[leftmargin=23pt, label={\rm (\roman*)}]
\item For each $j=1,2,\ldots, \left\lfloor\frac{k}{4}\right\rfloor$, we have $\widetilde{G}_{2j}^\shuffle \widetilde{G}_{k-2j}^\shuffle\equiv \widetilde{G}_k^\shuffle \mod \mathcal{TE}_k^{S}$.
\item The $q$-series $\widetilde{G}_k^\shuffle$ and $q\frac{d}{dq}\widetilde{G}_{k-2}^\shuffle$ lies in $\mathcal{TE}_k^{S}$.
\end{enumerate}
\end{lemma}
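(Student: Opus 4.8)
The plan is to peel off the depth‑$2$ contributions, reduce everything to a single non‑formal containment, and then settle that containment by $q$‑expansions. Writing out Definition~\ref{def:smes} for depth $3$ and using $(-1)^k=1$,
\[
G^{\shuffle,S}_{k_1,k_2,k_3}=G^\shuffle_{k_3,k_2,k_1}+(-1)^{k_2+k_3}G^\shuffle_{k_1}G^\shuffle_{k_3,k_2}+(-1)^{k_3}G^\shuffle_{k_1,k_2}G^\shuffle_{k_3}+G^\shuffle_{k_1,k_2,k_3}.
\]
First I would rewrite the products in the statement. Since $2j$ and $k-2j$ are even, the restricted stuffle relation (Corollary~\ref{cor:dsr_mes}) and the even--even case of Proposition~\ref{prop:Gsym_dep2_even} give $\widetilde G_{2j}^\shuffle\widetilde G_{k-2j}^\shuffle-\widetilde G_k^\shuffle=\tfrac12\bigl(\widetilde G^{\shuffle,S}_{2j,k-2j}-\widetilde G_k^\shuffle\bigr)$, while the $r=1$ case of Proposition~\ref{prop:Gsym_dep2_even} gives $q\frac{d}{dq}\widetilde G^\shuffle_{k-2}=2(k-2)\bigl(\widetilde G^{\shuffle,S}_{1,k-1}+\widetilde G_k^\shuffle\bigr)$. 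Thus (i) is equivalent to the congruences $\widetilde G^{\shuffle,S}_{2j,k-2j}\equiv\widetilde G_k^\shuffle\pmod{\mathcal{TE}_k^S}$ for $1\le j\le\lfloor k/4\rfloor$ (which do \emph{not} require $\widetilde G_k^\shuffle$ itself to lie in $\mathcal{TE}_k^S$, only the difference), and (ii) is equivalent to the single containment $\widetilde G_k^\shuffle\in\mathcal{TE}_k^S$ together with a congruence for $\widetilde G^{\shuffle,S}_{1,k-1}$.

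The congruences are purely formal. Since every depth‑$3$ symmetric multiple Eisenstein series lies in $\mathcal{E}^{S,(3)}_k=(2\pi i)^k\mathcal{TE}_k^S$ by definition, applying $\mathsf G^{\shuffle,S}$ to any $\mathbb Q$‑combination of depth‑$3$ words stays in $\mathcal{E}^{S,(3)}_k$. Taking an odd $p\ge 3$, an even $q\ge 2$ and an odd $r\ge 3$ with $p+q+r=k$, the stuffle relation of Proposition~\ref{prop:stuffle} gives $\mathsf G^{\shuffle,S}(e_p\ast e_{q,r})=G^{\shuffle,S}_p\,G^{\shuffle,S}_{q,r}=0$ because $G^{\shuffle,S}_p=0$ by \eqref{eq:dep1}; isolating the depth‑$2$ part $e_{q,p+r}+e_{p+q,r}$ of $e_p\ast e_{q,r}$ and using $G^{\shuffle,S}_{p+q,r}=-G^\shuffle_k$ (odd--odd case of Proposition~\ref{prop:Gsym_dep2_even}) yields $G^{\shuffle,S}_{q,p+r}\equiv G^\shuffle_k\pmod{\mathcal{E}^{S,(3)}_k}$. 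For $k\ge 10$ one checks $2\lfloor k/4\rfloor\le k-6$, so every index $q=2j$ needed in (i) is realizable, and (via the reversal relation \eqref{eq:reversal}) every even--even symmetric double Eisenstein series of weight $k$ is congruent to $G^\shuffle_k$; this already proves (i). Feeding this into the linear shuffle relation $\mathsf G^{\shuffle,S}(e_1\shuffle e_{k-1})=-G^{\shuffle,S}_{1,k-1}$ (Proposition~\ref{prop:shuffle_linear_smes}), together with the expansion $e_1\shuffle e_{k-1}=e_{1,k-1}+\sum_{m=1}^{k-1}e_{m,k-m}$ and the exact evaluation $G^{\shuffle,S}_{m,k-m}=-G^\shuffle_k$ for odd $m$, produces $4\,G^{\shuffle,S}_{1,k-1}\equiv -G^\shuffle_k\pmod{\mathcal{E}^{S,(3)}_k}$. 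Combining all of this, (ii) is reduced entirely to showing $\widetilde G_k^\shuffle\in\mathcal{TE}_k^S$.

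The last reduction is the main obstacle, and it genuinely needs input beyond the relation theory: in the depth‑graded sense $G^\shuffle_k$ has no component in the span of depth‑$3$ symmetric series, and the hypothesis $k\ge 10$ is sharp—for $k=6,8$ the bound $\dim_\Q\mathcal{E}^{S,(3)}_k\le\dim_\Q{\rm LSh}^{(3)}_{k-3}$ of Proposition~\ref{prop:bound_E} against $\dim_\Q\mathcal{E}^{S,(2)}_k$ from Theorem~\ref{thm:main}~(i) already forces $\mathcal{E}^{S,(2)}_k\not\subseteq\mathcal{E}^{S,(3)}_k$. I would therefore pass to $q$‑expansions: write $\widetilde G_k^\shuffle$ and a spanning family of $\mathcal{E}^{S,(3)}_k$ explicitly in terms of the $q$‑series $\widetilde g^\shuffle_{\bl}$ and the regularized values $\widetilde\zeta^\shuffle$, using \eqref{eq:q-exp_double}, \eqref{eq:G_g}, Theorem~\ref{thm:g^sh} and the displayed formula above, and then solve the resulting linear system for $\widetilde G_k^\shuffle$. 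As in the proof of Theorem~\ref{thm:main}~(ii), one can shrink the system drastically by taking imaginary parts—the $\widetilde g^\shuffle_{\bl}$ are rational, so the obstruction is governed by the purely imaginary numbers $\widetilde\zeta^\shuffle(2m+1)$—after which the claim becomes a rank/full‑span statement for an explicit integer matrix of the type occurring in Section~\ref{ssc:1.2-ii}; alternatively one may hope to exhibit directly a triple analogue of Kaneko's sum formula expressing $\widetilde G_k^\shuffle$ through symmetric triple Eisenstein series. I expect this final computation to be the technically heaviest part of the argument; granting it, Steps~1 and~2 assemble both assertions of the lemma.
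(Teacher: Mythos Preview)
Your argument for part~(i) is correct and is essentially the paper's proof: the stuffle relation applied to $G^{\shuffle,S}_p\,G^{\shuffle,S}_{q,r}$ with $p$ odd, $q$ even, $r$ odd (the paper phrases it as $\widetilde G^{\shuffle,S}_r\widetilde G^{\shuffle,S}_{s,t}$ with $r,s$ odd and $t$ even, but the content is identical). Your reduction of (ii) to the single containment $\widetilde G_k^\shuffle\in\mathcal{TE}_k^S$ is also fine.

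The gap is precisely that final containment, which you leave as a proposed $q$-expansion/matrix-rank computation. As stated this is not a proof: you have not carried it out, and even in outline it would be a verification for each fixed $k$ rather than an argument valid uniformly for all even $k\ge 10$. The imaginary-part trick from Section~\ref{ssc:1.2-ii} does not transplant cleanly here, since $\widetilde G_k^\shuffle$ itself has real coefficients for even $k$.

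The paper closes this gap with a short algebraic trick that stays entirely inside the stuffle framework you already set up. Run the same stuffle congruence \eqref{eq:cong_GG} once more, but now with $r,s,t$ \emph{all even}. Evaluating both sides via Proposition~\ref{prop:Gsym_dep2_even} gives
\[
2\,\widetilde G_r^\shuffle\widetilde G_s^\shuffle\widetilde G_t^\shuffle
\equiv \widetilde G_r^\shuffle\widetilde G_{s+t}^\shuffle+\widetilde G_{r+s}^\shuffle\widetilde G_t^\shuffle+\widetilde G_s^\shuffle\widetilde G_{r+t}^\shuffle-\widetilde G_k^\shuffle
\pmod{\mathcal{TE}_k^S}.
\]
Specialize $r=s=4$, $t=k-8\ge 2$, and use the classical identity $(\widetilde G_4^\shuffle)^2=\tfrac{7}{6}\widetilde G_8^\shuffle$ to turn the left side into $\tfrac{7}{3}\widetilde G_8^\shuffle\widetilde G_{k-8}^\shuffle$. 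Now feed in part~(i), which you have already proved, so that every product $\widetilde G_{2j}^\shuffle\widetilde G_{k-2j}^\shuffle$ becomes $\widetilde G_k^\shuffle$ modulo $\mathcal{TE}_k^S$; the congruence collapses to $\tfrac{7}{3}\widetilde G_k^\shuffle\equiv 2\widetilde G_k^\shuffle$, i.e.\ $\widetilde G_k^\shuffle\in\mathcal{TE}_k^S$. The derivative then follows from the Ramanujan--Serre relation $\widetilde G_{k-2}'=\partial\widetilde G_{k-2}+\tfrac{k-2}{12}E_2\widetilde G_{k-2}$, which places $\widetilde G_{k-2}'$ in $M_k^{\Q}(\mathrm{SL}_2(\Z))+\Q\,\widetilde G_2^\shuffle\widetilde G_{k-2}^\shuffle\subset\mathcal{TE}_k^S$ by Theorem~\ref{thm:two_product} and what has just been shown. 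No $q$-expansion computation is needed.
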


\begin{proof}
Let $r,s,t$ be positive integers greater than or equal to $2$ with $k=r+s+t$.
Using the stuffle relation, we obtain
\begin{equation}\label{eq:cong_GG}
\widetilde{G}^{\shuffle,S}_r\widetilde{G}^{\shuffle,S}_{s,t} 
\equiv \widetilde{G}^{\shuffle,S}_{r+s,t}+\widetilde{G}^{\shuffle,S}_{s,r+t} \mod \mathcal{TE}_k^{S}.
\end{equation}
If $r$ and $s$ are odd and greater than or equal to $3$, we have $\widetilde{G}^{\shuffle,S}_r=0$ due to \eqref{eq:dep1}, and thus we obtain
\begin{align*}
0&=\widetilde{G}^{\shuffle,S}_r\widetilde{G}^{\shuffle,S}_{s,t}\equiv \widetilde{G}^{\shuffle,S}_{r+s,t}+\widetilde{G}^{\shuffle,S}_{s,r+t} \mod\mathcal{TE}_k^{S}\\
&=2\widetilde{G}_{r+s}^\shuffle \widetilde{G}_t^\shuffle -2\widetilde{G}_k^\shuffle.
\end{align*}
Here the last equality follows from Proposition \ref{prop:Gsym_dep2_even}. This proves (i) because either $2j$ or $k-2j$ is greater than or equal to $6$; note that $10=6+4$.

Now suppose that all of $r,s$ and $t$ are even positive integers.
By definition we have
\[\widetilde{G}^{\shuffle,S}_r\widetilde{G}^{\shuffle,S}_{s,t}= 2\widetilde{G}_r^\shuffle (2\widetilde{G}_s^\shuffle \widetilde{G}_t^\shuffle -\widetilde{G}_{s+t}^\shuffle).\]
On the other hands, Proposition \ref{prop:Gsym_dep2_even} implies
\[ \widetilde{G}^{\shuffle,S}_{r+s,t}+\widetilde{G}^{\shuffle,S}_{s,r+t}=2\widetilde{G}_{r+s}^\shuffle \widetilde{G}_t^\shuffle +2\widetilde{G}_s^\shuffle \widetilde{G}_{r+t}^\shuffle -2\widetilde{G}_k^\shuffle.\]
Combining these equalities with \eqref{eq:cong_GG}, we obtain
\[ 2\widetilde{G}_r^\shuffle \widetilde{G}_s^\shuffle \widetilde{G}_t^\shuffle\equiv \widetilde{G}_r^\shuffle \widetilde{G}_{s+t}^\shuffle+\widetilde{G}_{r+s}^\shuffle \widetilde{G}_t^\shuffle +\widetilde{G}_s^\shuffle \widetilde{G}_{r+t}^\shuffle -\widetilde{G}_k^\shuffle \mod\mathcal{TE}_k^{S}.\]
Specializing at $r=s=4$ and using $(\widetilde{G}_4^\shuffle)^2=\frac{7}{6}\widetilde{G}_8^\shuffle$, we get
\[ \frac{7}{3}\widetilde{G}_8^\shuffle \widetilde{G}_{k-8}^\shuffle \equiv 2\widetilde{G}_4^\shuffle \widetilde{G}_{k-4}^\shuffle  + \widetilde{G}_8^\shuffle \widetilde{G}_{k-8}^\shuffle - \widetilde{G}_k^\shuffle \mod \mathcal{TE}_k^{S}.\]
Using (i), we obtain $\frac{1}{3}\widetilde{G}_k^\shuffle \equiv 0 \mod \mathcal{TE}^S_k$, which implies (ii) for $\widetilde{G}_k^\shuffle$.
For the derivative $q\frac{d}{dq}\widetilde{G}_k^\shuffle$, recall that the Ramanujan--Serre derivative $\partial f(\tau)=f'(\tau) -\frac{k}{12}E_2(\tau)f(\tau)$, where $E_2(\tau)=\frac{6}{\pi^2}G_2(\tau)$, sends a modular form of weight $k$ to a modular form of weight $k+2$.
This implies that there exists a modular form $g(\tau)$ of weight $k$ such that $\widetilde{G}_{k-2}'(\tau)$ is contained in $\Q g(\tau) + \Q \widetilde{G}_{k-2}(\tau)\widetilde{G}_2(\tau)$.
Hence $q\frac{d}{dq}\widetilde{G}_{k-2}^\shuffle\in \mathcal{TE}_k^{S}$ holds by Theorem~\ref{thm:two_product} combined with the statements (i) and (ii) for $\widetilde{G}^{\shuffle}_k$, as desired.
\end{proof}

We are now ready to prove Theorem $\ref{thm:modular_relations}$.

\begin{proof}[Proof of Theorem $\ref{thm:modular_relations}$]
For $k\ge10$ even, Lemma \ref{lem:triple} (i) implies that $\widetilde{G}_{2j}^\shuffle \widetilde{G}_{k-2j}^\shuffle\in \mathcal{TE}_k^S$.
Then, by Theorem \ref{thm:two_product} and Lemma \ref{lem:triple} (ii), we see that $M_k^\Q({\rm SL}_2(\Z)) \oplus \Q  \widetilde{G}_{k-2}'(\tau)$ is contained in $\mathcal{TE}_k^S$,
from which the result follows.
\end{proof}


%

\appendix
\section{The Goncharov coproduct and the Ihara group law}\label{appendix:Ihara-Goncharov}
In this appendix, We check the formula in Theorem \ref{thm:Gon_Ih} by a direct computation.
First, we write down the coefficients of all words that may appear on the left-hand side of the equation in Theorem \ref{thm:Gon_Ih}.
Then, we recall the Goncharov coproduct on the ring of formal iterated integrals and show that the terms appearing there coincide with the coefficients obtained above.

Let $A,B,\alpha$ and $\beta$ be as in Theorem \ref{thm:Gon_Ih}.
Set $\gamma(w)$ for $w\{e_0,e_1\}^\times$ to be the coefficient of $w^\vee$ in $B(X_0,AX_1A^{-1})$.
By definition, for a word $w\in \{e_0,e_1\}^\times$, we have
\[ \langle A\circ B\mid w \rangle = \sum_{w=w'w''}\gamma(w')\alpha(w'').\]
To compute $\gamma(w')$, we begin by expanding $B(X_0, AX_1A^{-1})$ with respect to the depth.
By \eqref{eq:inverse_cof}, we have $\langle A^{-1}\mid w\rangle=\alpha(\epsilon(w))$, where $\epsilon$ is defined as $\epsilon(e_{a_0}\cdots e_{a_n})=(-1)^ne_{a_n}\cdots e_{a_0}$.
Hence, it follows that 
\[AX_1A^{-1}=\sum_{w_1,w_2\in\{e_0,e_1\}^\times } \alpha(w_1)\alpha(\epsilon(w_2))w_1^\vee X_1w_2^\vee.\]
Using this, we may compute as
\begin{align*}
B(X_0, AX_1A^{-1})&=\sum_{\substack{r\ge0\\n_0,n_1\ldots,n_r\ge0}}\beta(e_0^{n_0}e_1e_0^{n_1}\cdots e_1e_0^{n_r})X_0^{n_0} \big(AX_1A^{-1}\big) X_0^{n_1}\cdots \big(AX_1A^{-1}\big)X_0^{n_r}\\
&=\sum_{\substack{r\ge0\\n_0,n_1\ldots,n_r\ge0}}\beta(e_0^{n_0}e_1e_0^{n_1}\cdots e_1e_0^{n_r})\sum_{\substack{w_{11},\ldots,w_{1r}\in\{e_0,e_1\}^\times \\w_{21},\ldots,w_{2r}\in\{e_0,e_1\}^\times}} \prod_{j=1}^r \alpha(w_{1j})\alpha(\epsilon(w_{2j}))\\
&\qquad \times X_0^{n_0} w_{11}^\vee X_1 w_{21}^\vee X_0^{n_1} \cdots w_{1r}^\vee X_1 w_{2r}^\vee X_0^{n_r}.
\end{align*}
Let us find all contributions to the coefficient of $(w')^\vee$ in the above expansion.
For instance, if $w'$ does not contain $e_1$ (i.e., $\deg_{e_1}(w')=0$), then we have $\gamma(w')=\beta(w')$, and there is no contribution from the coefficient of $AX_1A^{-1}$.
In general, suppose that $w'$ contains $e_1$ exactly $d$ times.
Then, for each $k=1,2,\ldots,d$, the contribution to $\gamma(w')$ is given by
\begin{equation}\label{eq:AppB}
\sum_{w'= u_0 e_1 u_1 e_1 u_2 \cdots e_1 u_k}
\sum_{\substack{
u_0=v_0'v_0''\\
u_1=v_1 v_1' v_1''\\
\vdots\\
u_{k-1}=v_{k-1} v_{k-1}' v_{k-1}''\\
u_k=v_k v_k'
}}
\beta(v_0' e_1 v_1' e_1 v_2' \cdots e_1 v_k')
\prod_{j=1}^k \alpha( v_{j-1}'' )\alpha(\epsilon( v_j)),
\end{equation}
where the first sum is taken over all deconcatenations of the form $w'= u_0 e_1 u_1 e_1 u_2 \cdots e_1 u_k$ (each word $u_j$ may contains $e_1$).
Summing these terms over all $k$ and adding $\beta(w')$ as the case $k=0$ in \eqref{eq:AppB} yields $\gamma(w')$.

For the right-hand side of the equation in Theorem \ref{thm:Gon_Ih}, let $\mathcal{I}$ be the ring of formal iterated integrals $\mathbb{I}(a_0;a_1,\ldots,a_n;a_{n+1})\ (a_j\in\{0,1\})$.
The $\Q$-algebra $\mathcal{I}$ is identified with $\mathfrak{H}_\shuffle$ via the map sending $\mathbb{I}(0;a_1,\ldots,a_n;1)$ to $e_{a_1}\cdots e_{a_n}$.
Note that the formal iterated integral satisfies $\mathbb{I}(1;a_1,\ldots,a_n;0)=(-1)^n \mathbb{I}(0;a_n,\ldots,a_1;1)$.
For $\varphi\in\{\alpha,\beta\}$, we use the same symbol $\varphi$ for the $\Q$-linear map $\varphi\colon \mathcal{I}\rightarrow R$ sending $\mathbb{I}(0;a_1,\ldots,a_n;1)$ to $\varphi(e_{a_1}\cdots e_{a_n})$.
Then, we have $\varphi(\mathbb{I}(1;a_1,\ldots,a_n;0))=\varphi(\epsilon(e_{a_1}\cdots e_{a_n}))$ by construction.

We now recall the explicit formula for the Goncharov coproduct $\Delta_G$:
\begin{equation}\label{eq:GonCop}
\begin{aligned}
& \Delta_G \big(\mathbb{I}(a_0;a_1,\ldots,a_n;a_{n+1})\big) \\
&\quad  =\sum_{\substack{0\le k \le n\\i_0=0<i_1<\cdots<i_k<i_{k+1}=n+1}} \prod_{p=0}^k \mathbb{I}(a_{i_p};a_{i_p+1},\ldots,a_{i_{p+1}-1};a_{i_{p+1}})  \otimes \mathbb{I}(0;a_{i_1},\ldots,a_{i_k};1).
\end{aligned}
\end{equation}
Each term in the sum is determined by a choice of positions $i_1,\ldots,i_k$ with 
$i_1<\cdots<i_k$.
We represent this choice by underlining the letters corresponding to
$i_1,\ldots,i_k$ and write
\[
I(e_{a_1}\cdots \underline{e_{a_{i_1}}}\cdots \underline{e_{a_{i_2}}}\cdots \underline{e_{a_{i_k}}}\cdots e_{a_n}) = \prod_{p=0}^k \mathbb{I}(a_{i_p};a_{i_p+1},\ldots,a_{i_{p+1}-1};a_{i_{p+1}})  \otimes \mathbb{I}(0;a_{i_1},\ldots,a_{i_k};1).\]
Then, for a word $w\in \{e_0,e_1\}^\times$, the sum defining $\Delta_G(w)$ is taken over
all possible ways of placing underlines on the letters of $w$.
To compute this, we first consider all deconcatenations of the form $w=w'w''$, and then all possible ways of placing underlines on the letters of $w'$.
Suppose that $w'$ contains $e_1$ exactly $d$ times.
For each $k = 1,2,\ldots,d$, we further consider all deconcatenations of the form $w'=u_0e_1u_1\cdots e_1u_k$, and, for each $j$, deconcatenations of the form $u_j=v_jv_j'v_j''$ with $v_0=v_k''=e_\varnothing$.
For this deconcatenation of $w'$, we consider the following way of placing underlines:
\begin{equation}\label{eq:II}
I(\underline{v_0'}v_0'' \underline{e_1} v_1 \underline{v_1'} v_1'' \underline{e}_1v_2 \underline{v_2'}\cdots \underline{v_{k-1}'}v_{k-1}'' \underline{e_1}v_k \underline{v_k'} w'') .
\end{equation}
Let $v_{j,L}'$ (resp.~$v_{j,R}'$) be the leftmost (resp.~rightmost) letter of $v_j'$.
Then, the above term can be written as
\begin{align*}
\left\{\prod_{j=1}^k \mathbb{I}(v_{j-1,R}';v_{j-1}'';1)\mathbb{I} (1;v_j;v_{j,L}') \right\} \mathbb{I}(v_{k,R}';w';1)\otimes \mathbb{I}(0;v_0'e_1v_1'e_1v_2'\cdots e_1 v_k';1).
\end{align*}
Note that, if $v_j''$ (resp.~$v_j$) is nonempty and $v_{j,R}' = e_1$ (resp.~$v_{j,L}'=e_1$), then $\mathbb{I}(v_{j,R}'; v_j''; 1)$ (resp.~$\mathbb{I} (1;v_j;v_{j,L}')$) vanishes due to \cite[Definition 3.2 (I4)]{BT}.
If $v_{j,R}' = e_0$ (resp.~$v_{j,L}'=e_0$), we have
\[
\alpha\bigl(\mathbb{I}(v_{j,R}'; v_j''; 1)\bigr) = \alpha(v_j'') \quad ({\rm resp}.~\alpha\bigl(\mathbb{I}(1;v_j;v_{j,L}')\bigr)= \alpha(\epsilon(v_j)) ).
\]
Hence, applying $m\circ (\alpha\otimes \beta)$ to \eqref{eq:II}, we get $\eqref{eq:AppB} \times \alpha(w'')$.
We leave it to the reader to verify that no other terms contribute to $\Delta_G(w)$.

\section{Rank estimation of a binomial matrix} \label{appendix:rank_estimate}

We will prove Lemma \ref{lem:rank_C_k} in the following.
Let $k$ be an odd integer greater than or equal to $5$. 
Note that the matrix $\mathcal{S}_k$ has the following form.
\begin{align*}
 \mathcal{S}_k &=
\begin{pmatrix}
 \displaystyle \binom{2}{0} &  \displaystyle \binom{4}{0} & \displaystyle  \binom{6}{0} &  \cdots & \cdots & \cdots & \displaystyle \binom{2\kappa}{0} \\[1em]
0 &  \displaystyle \binom{4}{2} & \displaystyle \binom{6}{2} & \cdots & \cdots & \cdots & \displaystyle  \binom{2\kappa}{2} \\[.7em]   
 \vdots & \ddots & \ddots & \ddots & \ddots & \ddots & \vdots \\[.7em]
 0 & 0 &  \cdots & 0 & \displaystyle \binom{\kappa+\frac{1-(-1)^\kappa}{2}}{\kappa-1} & \cdots  &  \displaystyle \binom{2\kappa}{\kappa-1} +\delta_{3\mid k} \binom{2\kappa}{2\kappa-1} \\[1em] 
 \vdots & \iddots & \iddots & \iddots & \iddots & \iddots &  \vdots \\[.7em]
0 &  \displaystyle \binom{4}{3} & \displaystyle \binom{6}{3} & \cdots & \cdots & \cdots & \displaystyle  \binom{2\kappa}{3} \\[1em]
 \displaystyle \binom{2}{1} &  \displaystyle \binom{4}{1} & \displaystyle  \binom{6}{0} &  \cdots & \cdots & \cdots & \displaystyle \binom{2\kappa}{1}
\end{pmatrix},
\end{align*}
where $\delta_{3\mid k}$ denotes $1$ when $k$ is divisible by $3$ and $0$ otherwise. 

To verify Lemma \ref{lem:rank_C_k}, we shall perform Gaussian elimination on $\mathcal{S}_k$ in a slightly elaborate way to reduce $\mathcal{S}_k$ into an upper triangular matrix with nonzero diagonal entries. The next binomial identity, which seems to be of independent interest, plays a crucial role in the reduction procedure.
Hereafter we set $\displaystyle \binom{n}{m}=0$ unless the integers $n$ and $m$ satisfy $n\geq m\geq 0$.

\begin{proposition} \label{prop:binom_id}
 For any non-negative integers $j$ and $a$, we have 
\begin{align} \label{eq:binom_id}
 \binom{j}{a}-(-1)^j\delta_{j,a}=\sum_{\ell=\left\lfloor\frac{a}{2}\right\rfloor}^{a} \left\{2 \binom{\ell+1}{a-\ell}-\binom{\ell}{a-\ell}\right\}\binom{j-\ell-1}{\ell}
\end{align}
where $\delta_{j,a}$ denotes the Kronecker delta.
\end{proposition}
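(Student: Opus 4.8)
The plan is to fix $a$ and treat both sides as polynomials in $j$ (or, more precisely, as sequences in $j \ge 0$), then verify the identity by an induction on $j$ combined with a generating-function computation. First I would rewrite the right-hand side using the substitution $\ell = a - i$, so that the summation variable $i$ runs over $0 \le i \le \lceil a/2 \rceil$; this makes the binomial $\binom{\ell}{a-\ell} = \binom{a-i}{i}$ a Fibonacci-type coefficient, and the sum becomes $\sum_i \left(2\binom{a-i+1}{i} - \binom{a-i}{i}\right)\binom{j-a+i-1}{a-i}$. The appearance of $\binom{a-i}{i}$ strongly suggests encoding the inner ``weight'' $c_{a,\ell} := 2\binom{\ell+1}{a-\ell} - \binom{\ell}{a-\ell}$ via the generating function $\sum_{a \ge 0} c_{a,\ell} t^a$, which telescopes nicely because $\sum_{a}\binom{\ell}{a-\ell}t^a = t^{\ell}(1+t)^{\ell}$ and $\sum_{a}\binom{\ell+1}{a-\ell}t^a = t^{\ell}(1+t)^{\ell+1}$.

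Concretely, I would introduce the two-variable generating function $F(t,x) = \sum_{j \ge 0}\sum_{a \ge 0} \mathrm{RHS}(j,a)\, t^a x^j$, interchange the order of summation to sum over $\ell$ first, and use $\sum_{j}\binom{j-\ell-1}{\ell}x^j = \dfrac{x^{2\ell+1}}{(1-x)^{\ell+1}}$ together with $\sum_{a}c_{a,\ell}t^a = t^{\ell}(1+t)^{\ell}\bigl(2(1+t) - 1\bigr) = t^{\ell}(1+t)^{\ell}(1+2t)$. This collapses the double sum into a single geometric series in $\ell$ with ratio $\dfrac{t(1+t)x^2}{1-x}$, giving a closed rational expression for $F(t,x)$. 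On the other side, $\sum_{j,a}\binom{j}{a}t^a x^j = \dfrac{1}{1-x(1+t)}$ and $\sum_{j,a}(-1)^j\delta_{j,a}t^a x^j = \dfrac{1}{1+tx}$, so the whole claim reduces to the rational-function identity $F(t,x) = \dfrac{1}{1-x(1+t)} - \dfrac{1}{1+tx}$, which can be checked by clearing denominators — a finite, mechanical polynomial verification.

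The main obstacle I anticipate is bookkeeping around the boundary of the summation range and the convergence/formal-power-series justification for interchanging the $\ell$-sum with the $j$-sum: the lower limit $\ell = \lfloor a/2\rfloor$ is exactly what ensures $\binom{j-\ell-1}{\ell}$ can be nonzero, and one must confirm that extending $\ell$ to run over all non-negative integers (which is what makes the geometric series clean) introduces only terms that vanish identically because of the convention $\binom{n}{m}=0$ for $n<m<0$ excluded. A cleaner alternative, which I would pursue if the generating-function route gets messy, is a direct induction on $j$: the left-hand side satisfies $\binom{j}{a} = \binom{j-1}{a} + \binom{j-1}{a-1}$ (away from the Kronecker correction), and one checks that the right-hand side satisfies the same recursion using $\binom{j-\ell-1}{\ell} = \binom{j-\ell-2}{\ell} + \binom{j-\ell-2}{\ell-1}$ after re-indexing $\ell \mapsto \ell-1$ in the second piece; the $(-1)^j\delta_{j,a}$ term is precisely the correction needed to make the base cases $j \le a$ match. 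Either way the argument is elementary; the only real care needed is in handling the edge terms and the parity-dependent top index, so I would present the generating-function proof as the main line and relegate the recursion check to a remark.
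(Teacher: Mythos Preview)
Your proposal is correct and follows essentially the same route as the paper: both compute the bivariate generating function of the right-hand side by summing first over $a$ (yielding the factor $t^\ell(1+t)^\ell(1+2t)$, which the paper writes as $X^\ell(1+X)^\ell(2X+1)$), then over $j$ (yielding $x^{2\ell+1}/(1-x)^{\ell+1}$), and finally over $\ell$ as a geometric series, arriving at the rational function $\dfrac{(1+2t)x}{1-x-t(1+t)x^2}=\dfrac{1}{1-(1+t)x}-\dfrac{1}{1+tx}$. The paper handles your anticipated boundary concern exactly as you suggest, observing that extending the $\ell$-range is harmless because $\binom{j-\ell-1}{\ell}=0$ for $\ell>\lfloor (j-1)/2\rfloor$; your alternative induction argument is not used there.
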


\begin{proof}
Let $\phi_{j,a}$ denote the right-hand side of \eqref{eq:binom_id}.
 We determine the generating function of the sequence $\bigl(\phi_{j,a}\bigr)_{j,a}$. We first calculate $\sum_{a=0}^j \phi_{j,a}X^a$ by interchanging the order of summation as
\begin{align*}
 \sum_{a=0}^j \phi_{j,a}X^a &=\sum_{\ell=0}^{\left\lfloor\frac{j-1}{2}\right\rfloor}\left[ 
\sum_{a=\ell}^{2\ell+1} \left\{2\binom{\ell+1}{a-\ell}-\binom{\ell}{a-\ell}\right\}X^a \right]\binom{j-\ell-1}{\ell} \\
&\qquad +\sum_{\ell=\left\lfloor\frac{j+1}{2}\right\rfloor}^{j}\left[ 
\sum_{a=\ell}^{j} \left\{2\binom{\ell+1}{a-\ell}-\binom{\ell}{a-\ell}\right\}X^a \right]\binom{j-\ell-1}{\ell},
\end{align*}
but the second sum vanishes since the binomial coefficient $\binom{j-\ell-1}{\ell}$ equals $0$ for each $\ell=\left\lfloor\frac{j+1}{2}\right\rfloor, \left\lfloor\frac{j+1}{2}\right\rfloor+1,\ldots,j$. Therefore we obtain
\begin{align*}
\sum_{j=0}^\infty \sum_{a=0}^j\phi_{j,a}X^a Y^j &=\sum_{j=0}^\infty \left[\sum_{\ell=0}^{\left\lfloor\frac{j-1}{2}\right\rfloor} \{2X^\ell (1+X)^{\ell+1}-X^\ell (1+X)^\ell\} \binom{j-\ell-1}{\ell}\right] Y^j \\
&=(2X+1)\sum_{j=0}^\infty\sum_{\ell=0}^{\left\lfloor\frac{j-1}{2}\right\rfloor} \binom{j-\ell-1}{\ell} X^\ell (1+X)^\ell Y^j \\
&=\dfrac{(2X+1)Y}{1-X(X+1)-X(X+1)Y^2} =\dfrac{1}{1-(1+X)Y}-\dfrac{1}{1+XY}.
\end{align*}
Here we use a well-known formula 
\begin{align*}
 \sum_{j=0}^\infty \sum_{\ell=0}^{\left\lfloor\frac{j-1}{2}\right\rfloor} \binom{j-\ell-1}{\ell} z^\ell Y^j=\dfrac{Y}{1-z-zY}
\end{align*}
at the second equality. 
Meanwhile, the generating function of  $\{ \binom{j}{a} -(-1)^j \delta_{j,a}\}_{j,a}$ is readily calculated as 
\begin{align*}
 \sum_{j=0}^\infty \left[\sum_{a=0}^j \left\{\binom{j}{a} -(-1)^j \delta_{j,a}\right\} X^a\right] Y^j&=\sum_{j=0}^\infty \left\{ (1+X)^j -(-X)^j\right\}Y^j \\
&=\dfrac{1}{1-(1+X)Y}-\dfrac{1}{1+XY}.
\end{align*}
Since the generating functions coincide with each other, we obtain an identity \eqref{eq:binom_id} as desired.
\end{proof}

\begin{corollary}
 For positive integers $\ell'$ and $m$, we have
\begin{align} \label{eq:binom_id2}
\binom{2m}{\ell'-1}-\delta_{\ell'-1,2m} =\binom{2m-\ell'}{\ell'-1} \! +\sum_{\nu=1}^{\left\lfloor\frac{\ell'}{2}\right\rfloor} \left\{ \binom{\ell'-\nu}{\nu}+\binom{\ell'-\nu-1}{\nu-1}\right\} \! \binom{2m-\ell'+\nu}{\ell'-\nu-1}.
\end{align}
\end{corollary}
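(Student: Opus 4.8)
The plan is to derive \eqref{eq:binom_id2} directly from Proposition~\ref{prop:binom_id} by specialization and reindexing; no new combinatorial input is needed beyond Pascal's rule. Concretely, I would apply the identity \eqref{eq:binom_id} with $j=2m$ and $a=\ell'-1$. Since $j$ is even, the term $(-1)^j\delta_{j,a}$ becomes $\delta_{2m,\ell'-1}$, so the left-hand side already coincides with that of \eqref{eq:binom_id2}. The right-hand side of \eqref{eq:binom_id} then reads
\[
\sum_{\ell=\lfloor(\ell'-1)/2\rfloor}^{\ell'-1}\left\{2\binom{\ell+1}{\ell'-1-\ell}-\binom{\ell}{\ell'-1-\ell}\right\}\binom{2m-\ell-1}{\ell}.
\]

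Next I would substitute $\nu=\ell'-1-\ell$ (equivalently $\ell=\ell'-1-\nu$). A short check shows that as $\ell$ runs from $\lfloor(\ell'-1)/2\rfloor$ to $\ell'-1$, the new index $\nu$ runs from $0$ to $\ell'-1-\lfloor(\ell'-1)/2\rfloor=\lfloor\ell'/2\rfloor$ (this equality of floors holds for both parities of $\ell'$). Under this substitution one has $\binom{\ell+1}{\ell'-1-\ell}=\binom{\ell'-\nu}{\nu}$, $\binom{\ell}{\ell'-1-\ell}=\binom{\ell'-1-\nu}{\nu}$, and $\binom{2m-\ell-1}{\ell}=\binom{2m-\ell'+\nu}{\ell'-1-\nu}$, so the sum becomes
\[
\sum_{\nu=0}^{\lfloor\ell'/2\rfloor}\left\{2\binom{\ell'-\nu}{\nu}-\binom{\ell'-1-\nu}{\nu}\right\}\binom{2m-\ell'+\nu}{\ell'-1-\nu}.
\]

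I would then separate the $\nu=0$ term, which equals $(2-1)\binom{2m-\ell'}{\ell'-1}=\binom{2m-\ell'}{\ell'-1}$ and accounts for the first summand on the right-hand side of \eqref{eq:binom_id2}. For $\nu\ge 1$ it remains to identify the coefficient, i.e.\ to verify
\[
2\binom{\ell'-\nu}{\nu}-\binom{\ell'-1-\nu}{\nu}=\binom{\ell'-\nu}{\nu}+\binom{\ell'-\nu-1}{\nu-1},
\]
which rearranges to $\binom{\ell'-\nu}{\nu}-\binom{\ell'-\nu-1}{\nu}=\binom{\ell'-\nu-1}{\nu-1}$, and this is exactly Pascal's rule. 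Assembling these pieces yields \eqref{eq:binom_id2}. There is no genuine obstacle here; the only points requiring a little care are the floor computation governing the summation range and checking that Pascal's rule together with the manipulations above remain valid in the degenerate cases (e.g.\ $\nu=\lfloor\ell'/2\rfloor$ with $\ell'$ even, where $\binom{\ell'-1-\nu}{\nu}=0$) under the stated convention $\binom{n}{m}=0$ unless $n\ge m\ge 0$.
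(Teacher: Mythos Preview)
Your proof is correct and takes essentially the same approach as the paper: specialize \eqref{eq:binom_id} at $j=2m$, $a=\ell'-1$, reindex via $\ell=\ell'-1-\nu$, and invoke Pascal's rule to rewrite the coefficient. The only cosmetic difference is that the paper applies Pascal's rule first (rewriting $2\binom{\ell+1}{a-\ell}-\binom{\ell}{a-\ell}$ as $\binom{\ell+1}{a-\ell}+\binom{\ell}{a-\ell-1}$) and then substitutes, whereas you substitute first and apply Pascal afterwards; the content is identical.
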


\begin{proof}
 Using Pascal's relation $\binom{\ell}{a-\ell-1}+\binom{\ell}{a-\ell}=\binom{\ell+1}{a-\ell}$, we can rewrite \eqref{eq:binom_id} as
\begin{align*}
 \binom{j}{a}-(-1)^j\delta_{j,\ell'}=\sum_{\ell=\left\lfloor\frac{\ell'}{2}\right\rfloor}^{\ell'}\left\{ \binom{\ell+1}{a-\ell}+\binom{\ell}{a-\ell-1}\right\} \binom{j-\ell-1}{\ell}.
\end{align*}
Replacing $a$ by $\ell'-1$, $j$ by $2m$ and $\ell$ by $\ell'-\nu-1$, we obtain the desired equality.
\end{proof}

\begin{proof}[Proof of Lemma $\ref{lem:rank_C_k}$]
 By swapping rows appropriately, one may transform $\mathcal{S}_k$ into a square matrix $\mathcal{S}_k'+2\kappa \delta_{3\mid k}E_{\kappa,\kappa}$, where $\mathcal{S}_k'$ is defined as 
\begin{align*}
 \mathcal{S}_k'&=\left(\binom{2m}{\ell'-1}-\delta_{\ell'-1,2m}\right)_{\begin{subarray}{c}  1\le \ell' \le \kappa \\ 1\le m\le \kappa\end{subarray}}  \\
&=
\begin{pmatrix}
 \displaystyle \binom{2}{0} & \displaystyle \binom{4}{0} & \cdots & \cdots & \cdots &\displaystyle \binom{2\kappa-2}{0} & \displaystyle \binom{2\kappa}{0} \\[1em ]
 \displaystyle \binom{2}{1} &  \displaystyle\binom{4}{1} & \cdots & \cdots & \displaystyle \cdots & \displaystyle \binom{2\kappa-2}{1} &\displaystyle \binom{2\kappa}{1} \\[1em]
\displaystyle 0 & \displaystyle\binom{4}{2} & \cdots & \cdots & \cdots &\displaystyle \binom{2\kappa-2}{3} & \displaystyle\binom{2\kappa}{2} \\[1em]
\displaystyle 0 &  \displaystyle \binom{4}{3} & \cdots & \cdots & \cdots &  \displaystyle \binom{2\kappa-2}{4} &\displaystyle \binom{2\kappa}{0} \\ 
 \vdots & \vdots &\ddots & \ddots & \ddots & \vdots  & \vdots \\
 \vdots & \vdots &\ddots & \ddots & \ddots & \vdots  & \vdots \\
  0 & 0 & \cdots & \cdots & \cdots  & 0 &  \displaystyle\binom{\kappa}{\kappa-1}
\end{pmatrix}
\end{align*}
and $E_{\kappa,\kappa}$ denotes a square matrix of degree $\kappa$ with a $1$ at the $(\kappa,\kappa)$-entry and and $0$'s elsewhere.
We claim that, by performing row elementary operations (with coefficients in $\mathbb{Z}$), one may transform $\mathcal{S}_k'+2\kappa\delta_{3\mid k}E_{\kappa,\kappa}$ into an upper triangular matrix $\mathcal{T}_k+2\kappa\delta_{3\mid k}E_{\kappa,\kappa}$, where $\mathcal{T}_k$ is defined as 
\begin{align*}
 \mathcal{T}_k&=\left(\binom{2m-\ell'}{\ell'-1}\right)_{\begin{subarray}{c}  1\le \ell' \le \kappa \\ 1\le m\le \kappa\end{subarray}} =
\begin{pmatrix}
 \displaystyle \binom{1}{0} & \displaystyle \binom{3}{0} & \displaystyle \binom{5}{0} & \cdots & \cdots & \displaystyle \binom{2\kappa-1}{0} \\[1em ]
 0 &  \displaystyle\binom{2}{1} & \displaystyle \binom{4}{1} & \cdots & \displaystyle \cdots & \displaystyle \binom{2\kappa-2}{1} \\[1em]
\displaystyle 0 & 0 & \displaystyle \binom{3}{2} & \cdots & \cdots  & \displaystyle\binom{2\kappa-3}{2} \\[1em]
 \vdots & \vdots &\vdots & \ddots & \ddots & \vdots\\
 \vdots & \vdots &\vdots & \ddots & \ddots & \vdots \\
  0 & 0 & 0 & \cdots & \cdots  &   \displaystyle\binom{\kappa}{\kappa-1}
\end{pmatrix}.
\end{align*}

For any $\ell'=1,2,\ldots,\kappa$, let $\mathcal{S}_k^{(\ell')}$ denote the matrix $\mathcal{S}_k'$ whose $\nu$-th row is replaced by that of $\mathcal{T}_k$ for $\nu=1,2,\ldots,\ell'$; and thus we have $\mathcal{S}_k^{(\kappa)}=\mathcal{T}_k$. We complete the proof by induction on $\ell'$; namely we prove that $\mathcal{S}_k^{(1)}=\mathcal{S}_k'$ and, if we can reduce $\mathcal{S}_k'$ into $\mathcal{S}_k^{(\ell'-1)}$, we may transform it into $\mathcal{S}_k^{(\ell')}$ by further row elementary operations.
Indeed, the identity \eqref{eq:binom_id2} with $\ell'=1$, which is none other than the trivial identity $\binom{2m}{0}=\binom{2m-1}{0}$, implies the equality $\mathcal{S}_k^{(1)}=\mathcal{S}_k'$. Now suppose that, after several row operations, one has reduced $\mathcal{S}_k'$ into $\mathcal{S}_k^{(\ell'-1)}$ for $2\leq \ell'\leq\kappa$. The entries of $\mathcal{S}_k^{(\ell')}$ belonging to the $\nu$-th row are then given by $\left( \binom{2m-\ell'}{\ell'-1}\right)_{1\leq m\leq \kappa}$ for $\nu=1,2,\ldots,\ell'-1$ by assumption, whereas the entries belonging to the $\ell'$-th row are given by $\left( \binom{2m}{\ell'-1}-\delta_{\ell'-1,2m} \right)_{1\leq m\leq \kappa}$. Therefore if one subtracts the $(\ell'-\nu)$-th row multiplied by $\binom{\ell'-\nu}{\nu}+\binom{\ell'-\nu-1}{\nu-1}$ from the $\ell'$-th row for every $\nu=1,2,\ldots,\ell'-1$, one may transform $\mathcal{S}_k^{(\ell'-1)}$ into $\mathcal{S}_k^{(\ell')}$ due to \eqref{eq:binom_id2}, as desired.

Finally note that, throughout the whole reduction procedure from $\mathcal{S}_k'$ into $\mathcal{T}_k$, which consists of only row addition, we have never added a multiple of the $\kappa$-th row to another row. Therefore if we perform the same row operations on $2\kappa\delta_{3\mid k}E_{\kappa,\kappa}$, it remains unchanged because $E_{\kappa,\kappa}$ contains non-zero entries only in the $\kappa$-th row. We have thus verified that we may transform $\mathcal{S}_k'+2\kappa\delta_{3\mid k}E_{\kappa,\kappa}$ into $\mathcal{T}_k'+2\kappa\delta_{3\mid k}E_{\kappa,\kappa}$ by the {\em same} row elementary operations, which completes the proof.
\end{proof}

\begin{remark}
 The readers should compare the matrix $\mathcal{S}_k'=(\binom{2m}{\ell'-1}-\delta_{\ell'-1,2m})_{1\leq \ell',m\leq \kappa}$ appearing in the proof of Lemma \ref{lem:rank_C_k} with the matrix $M_\kappa=(\binom{2m-1}{\ell'-1}-\delta_{\ell'-1,2m-1})_{1\leq \ell',m\leq \kappa}$ appearing in \cite[(A.1)]{Matthes}. This similarity  also seems to  imply an underlying relationship between symmetric double Eisenstein series and elliptic double zeta values and.
\end{remark}

\section*{Acknowledgments}
This work is partially supported by
JSPS KAKENHI Grant Numbers 22K03237 (T.H.), 23K03069 (K.S.) and 23K03034 (K.T.).

\section*{Statements and Declarations}
We wish to confirm that there are no known conflicts of interest associated with this publication and there has been no significant financial support for this work that could have influenced its outcome.

The data that support the findings of this study are available from the corresponding author, K.T.\ upon reasonable request.


\end{document}